\newtheorem{theorem}{Theorem}
\numberwithin{theorem}{section}
\newtheorem{lemma}[theorem]{Lemma}
\newtheorem{corollary}[theorem]{Corollary}
\def\Halmos{\mbox{\quad$\square$}}
\begin{document}
	\title{Iterative Collaborative Filtering for Sparse Matrix Estimation}
	\author{Christian Borgs \footnote{UC Berkeley, Berkeley, CA;
			\texttt{borgs@berkeley.edu}} 
			\,\, Jennifer T. Chayes  \footnote{UC Berkeley, Berkeley, CA;
			\texttt{jchayes@berkeley.edu}}
			\,\, Devavrat Shah  \footnote{Massachusetts Institute of Technology, Cambridge, MA;
			\texttt{devavrat@mit.edu}}
			\,\, Christina Lee Yu \footnote{Cornell University, Ithaca NY;
			\texttt{cleeyu@cornell.edu}}}
	\date{}
	\maketitle

	\begin{abstract}
We consider sparse matrix estimation where the goal is to estimate an $n\times n$ matrix from noisy observations of a small subset of its entries. We analyze the estimation error of the popularly utilized collaborative filtering algorithm for the sparse regime. Specifically, we propose a novel iterative variant of the algorithm, adapted to handle the setting of sparse observations. We establish that as long as the fraction of entries observed at random scale as $\frac{\log^{1+\kappa}(n)}{n}$ for any fixed $\kappa > 0$, the estimation error with respect to the $\max$-norm decays to $0$ as $n\to\infty$ assuming the underlying matrix of interest has constant rank $r$. Our result is robust to model mis-specification in that if the underlying matrix is approximately rank $r$, then the estimation 
error decays to the approximate error with respect to the $\max$-norm.  In the process, we establish algorithm's ability  
to handle arbitrary bounded noise in the observations. 
	\end{abstract}

	\section{Introduction}
\label{sec:intro}

We consider the task of sparse matrix estimation given noisy observations. Let $F$ be an $n \times n$ matrix which we 
would like to estimate, and let $Z$ be a noisy signal of matrix $F$ such that $\E[Z] = F$. Let $\cE \subset [n] \times [n]$ denote the subset of indices that are observed. In particular, we observe matrix $M$ where $M(u,v) = Z(u,v)$ for $(u,v) \in \cE$, and $M(u,v) = 0$ for $(u,v) \notin \cE$. 
We assume that the entries of $Z$ are independent random variables, and we assume a Bernoulli sampling model; each $(u, v) \in [n] \times [n]$ is in $\cE$
with probability $p \in (0,1]$ independently. The goal is to estimate $F$. 

As a prototype for such a problem,  consider a noisy observation of a social network where observed interactions are 
signals of true underlying connections. We might want to predict the probability that two users would choose to connect 
if recommended by the platform, e.g. LinkedIn. As a second example, consider a recommendation system where we observe 
movie ratings provided by users, and we may want to predict the probability distribution over ratings for specific movie-user pairs.  
A popular collaborative filtering approach suggests using ``similarities'' between pairs of users to estimate the probability that a 
connection is formed or the probability a user likes a particular movie. Traditionally, the similarities between pair of users in a social network is computed 
by comparing the set of their friends, or in the context of movie recommendation, by comparing commonly rated movies. In the 
sparse setting, most pairs of users have no common friends, or most pairs of users have no commonly rated movies; 
thus there is insufficient data to compute the traditional similarity metrics.

In this work, the primary interest is to provide a principled way to extend the simple, intuitive approach of computing similarities
between pair of users or items in order to perform sparse matrix estimation via nearest neighbor collaborative filtering. We propose to do so by incorporating information within a larger radius 
neighborhood of the data graph rather than restricting only to immediate neighbors. This variation of collaborative filtering and its analysis in this work
can be viewed as a natural extension of the work by \cite{AbbeSandon15a, AbbeSandon16} in the context of stochastic block model
and \cite{ZhangLevinaZhu15, LeeLiShahSong16} for traditional collaborative filtering.

\subsection{Summary of Contributions}
The primary contribution of this work is an analysis of an iterative collaborative filtering algorithm in the sparse regime. We consider 
the setting of a latent variable model where the matrix $F = [F(u,v)]$ can be described by a latent function $f$ evaluated over latent variables associated to the coordinates. In particular, we assume that $F(u, v) = f(\theta_u, \theta_v)$ where $f$ is a piece-wise Lipschitz function, and
$\theta_u, \theta_v \in [0,1]$ are coordinate latent variables sampled uniformly at random. Details of the model
are described in Section \ref{sec:model}.

As the main result of this work, we establish that with high probability the max entry-wise error associated with the resulting 
estimate converges to $0$ as long as the latent function $f$ when regarded as an integral operator has finite spectrum with 
constant rank $r$ and $p = \Omega(n^{-1 + \kappa})$ for $\kappa > 0$. In addition, if we have knowledge of the spectrum, the algorithm 
can be improved so that the max entry-wise error of the estimate converges to zero as long as $p = \Omega(n^{-1} \ln^{1 + \kappa} n)$ 
for any $\kappa > 0$. 
We also establish robustness of our result with respect to the low rank requirement of $f$. In particular, 
we provide a robust version of our result that holds when $f$ has $\eps$-approximate rank $r$, i.e. 
there exists a rank $r$ function that approximates $f$ within $\eps$ with respect to the $\ell_\infty$ 
norm. We establish it by arguing that if all the observed entries are perturbed arbitrarily or adversarially 
within $\eps$, then the algorithm estimates for each entry are perturbed by at most 
$O(\max(\sqrt{\eps}, \eps))$. The efficacy of the proposed algorithm with respect to arbitrary 
noise is an interesting result on its own.

Algorithmically and methodologically, our work builds on \cite{AbbeSandon15a, AbbeSandon15b, AbbeSandon16}, which estimates clusters 
of the stochastic block model by computing distances from local neighborhoods around vertices. We improve upon their algorithm and
analysis to provide bounds on the maximum entrywise estimation error for the general latent variable model with finite spectrum.
This includes a larger class of generative models such as mixed membership stochastic block models, in contrast to their work which focuses on the stochastic 
block model with non-overlapping communities. We note that the algorithm considered in this work, uses the knowledge of which 
entries are observed and which are not, in line with the literature on matrix estimation. In the setting of clustering cf. 
\cite{AbbeSandon15a, AbbeSandon15b, AbbeSandon16}, such a knowledge is absent from the purview of the algorithm. 

Withthe  exception of a few recent results, by and large the literature on matrix estimation has focused on providing estimation error bounds with
respect to the normalized Frobenius norm. In contrast, we provide bounds on the max entry-wise estimation error which is
a lot more challenging. {Our bounds are restricted to the latent variable model, while the traditional matrix estimation literature
considers the underlying matrix to be an arbitrary instance from the family of (approximately) low-rank matrices with `incoherence'-like
conditions. Indeed, understanding the relationship between these two seemingly different model classes remains an important direction
for future work. }

A weaker version of this result was published in the NeurIPS conference as \cite{NIPS2017_7cc23420}. In contrast, this paper provides sharper bounds for both the MSE and max-norm error that improves the exponent in the convergence rates. We have also included a perturbation analysis of the algorithm that shows under “adversarial” bounded noise, the error scales gracefully with the bound on the noise. This enables analysis of our work for the approximately low-rank setting. We have also included a modified algorithm that achieves the same rates with a reduced computational complexity, and we have shown extensions of our results to relaxed modeling assumptions on the latent variable model. We have added empirical evaluation of our method compared with state-of-art methods. 

\subsection{Related Work}
\label{sec:related_work}
The related work includes that of matrix estimation or completion, collaborative filtering, and graphon estimation arising from the asymptotic theory of graphs. We provide a brief overview of prior works for each of these topics. 

In the context of matrix estimation or completion, there has been much progress under the low-rank assumption and additive noise model. Most theoretically founded methods are based on spectral decompositions or minimizing a loss function with respect to spectral constraints, c.f. \cite{KeshavanMontanariOh10a, KeshavanMontanariOh10b, CandesRecht09, CandesTao10, Recht11, NegahbanWainwright11, DavenportPlanBergWootters14, ChenWainwright15, Chatterjee15, Xu18}. In a nutshell, this collection of works establishes that if the underlying matrix
has rank $r$, then it can be estimated so that the estimator has normalized Mean Squared Error (MSE) going to $0$ as $n\to \infty$ as long as $p = \Omega(r n^{-1} \log n)$. Furthermore, \cite{KeshavanMontanariOh10b, CandesPlan10} showed that $\omega(r n^{-1})$ 
samples are necessarily required for such a guarantee. These near optimal sample complexity results hold when the noise in each entry of the matrix is independent and identically distributed. For the setting of generic noise and the general latent variable model where the latent function is analytic, \cite{Chatterjee15, Xu18} provide an estimator for which the MSE decays to $0$ as $n\to\infty$ as long as $p = \Omega(n^{-1} {\sf poly}(\log n))$. 

The guarantee with respect to MSE does not necessarily guarantee recovery of {\em all} entries accurately. Indeed, bounding max entrywise error provides such a guarantee as established by our result. In parallel with our work, there has been recent progress 
on developing matrix estimation methods that provide max entrywise bounds for matrices with rank $r$. In particular, for sufficiently `nice' rank $r$ matrices, 
\cite{abbe2020entrywise} establish that a simple spectral algorithm can recover the matrix with max entrywise error decaying to $0$ 
as long as $p = \Omega(\log n / n)$.  Indeed, improving such max entrywise guarantee has been actively pursued over the past few years 
witnessed in the growing body of works, cf. \cite{chen2019spectral}, \cite{zhong2018near}, \cite{cai2019subspace}, \cite{ma2018implicit} and \cite{DingChen20}.


The collaborative filtering method has been successfully employed across industry applications (Netflix, Amazon, Youtube) due to its simplicity and scalability, c.f. \cite{goldberg92, Linden03,korenHandbook,Ning2015}; however the theoretical results have been relatively sparse. We call special attention to the recent works by \cite{ZhangLevinaZhu15, LeeLiShahSong16, LiShahSongYu20}
which provide a non-parametric statistical perspective for the traditional collaborative filtering method. In particular, they suggest that the practical success of these methods across a variety 
of applications may be due to its ability to capture local structure like the classical nearest neighbor or kernel regression method. They establish that as long as the latent function $f$ 
is Lipschitz, the MSE of the resulting estimator decays to $0$ as $n\to\infty$ as long as $p = \omega(n^{-\frac12})$. A key limitation of this approach is that it requires a dense dataset with 
sufficient entries in order to compute similarity metrics, requiring that each pair of rows or columns has a growing number of overlapped observed entries, which does not hold when $p = o(n^{-1/2})$. 

Graphons emerged as the limiting object of a sequence of large dense graphs, c.f. \cite{BorgsChayesLovaszSosVestergombi08, DiaconisJanson07,Lovasz12}, with recent work extending the theory to sparse graphs, c.f. \cite{BorgsChayesCohnZhao14a, BorgsChayesCohnZhao14b, BorgsChayesCohnHolden16, VeitchRoy15}. In the graphon estimation problem, one observes a single instance of a random graph sampled from an underlying latent variable model, and the goal is to estimate the function that governs the edge probabilities of the graph. \cite{GaoLuZhou15,KloppTsybakovVerzelen15} provide minimax optimal rates for graphon estimation; however a majority of the proposed estimators are not computable in polynomial time, since they require optimizing over an exponentially large space (e.g. least squares or maximum likelihood), c.f. \cite{WolfeOlhede13, BorgsChayesCohnGanguly15, BorgsChayesSmith15, GaoLuZhou15, KloppTsybakovVerzelen15}. \cite{BorgsChayesCohnGanguly15} provides a polynomial time method based on 
degree sorting in the special case when the expected degree function is monotonic. 
\cite{Xu18} analyzes universal singular value thresholding (USVT) for graphon estimation in settings that the spectrum decays quickly, showing convergence rates which matches the minimax optimal rate for low dimensional smooth functions. 

Stochastic block model (SBM) parameter estimation is an instance of graphon estimation, where the underlying function has a specific structure. Under the SBM, each vertex is associated to one of $r$ community types, and the probability of an edge is a function of the community types of both endpoints. This implies that the edge probability function is block constant. Estimating the $n\times n$ parameter matrix becomes an instance of matrix estimation with
a technical distinction -- all entries are fully observed, i.e. each edge is present (1) or absent (0). In SBM, the expected matrix is at most rank $r$ due to its block structure. Precise thresholds for cluster detection (better than random) and estimation have been established by \cite{AbbeSandon15a, AbbeSandon15b, AbbeSandon16}. As mentioned before, our work, both algorithmically and methodically is closely
related to their work. The mixed membership stochastic block model (MMSBM) allows each vertex to be associated to a length $r$ vector, which represents its weighted membership in each of the $r$ communities. The probability of an edge is a function of the weighted community memberships vectors of both endpoints, resulting in an expected matrix with rank at most $r$. Recent work by \cite{SteurerHopkins17} provides an algorithm for weak detection for MMSBM with sample complexity $r^2 n$, when the community membership vectors are sparse and evenly weighted. 
They provide partial results to support a conjecture that $r^2 n$ is a computational lower bound, separated by a gap of $r$ from the information theoretic lower bound of $r n$. This gap was 
first shown in the simpler context of the stochastic block model \cite{DecelleKrzakalaMooreZdeborova11}. \cite{XuMassoulieLelarge14} proposed a spectral clustering method for inferring 
the edge label distribution for a network sampled from a generalized stochastic block model. When the expected function has a finite spectrum decomposition, i.e. low rank, then they 
provide a consistent estimator for the sparse data regime, with $\Omega(n \log n)$ samples. 

In the above discussion, we have focused primarily on the sample complexity required for consistent estimation, 
i.e. the scaling of the number of samples required ($pn$) such that the normalized estimation 
error such as the MSE or max-norm goes to $0$. 
When consistent estimation is feasible, we can further consider the rate of decay of the error guarantees. To that end, we provide a brief overview of the minimax scaling with respect to boudns on the MSE. 
 \cite{Chatterjee15} identifies a minimax lower bound on the scaling of the MSE for a generic matrix estimation task characterized by the nuclear norm of the target matrix. 
In particular, for symmetric matrices with nuclear norm bounded by $\delta$, the minimax MSE scaling is lower bounded 
by $\min\big(\frac{\delta}{\sqrt{n^3 p}}, \frac{\delta^2}{n^2}, 1 \big)$; furthermore \cite{Chatterjee15} argues that the universal singular value
thresholding achieves this scaling. This bound holds even in the scenario where observed entries are noiseless. This characterization however is loose for the setting of low-rank matrices. 
Observe that for rank $r$ symmetric matrices with entries bounded in $[-1,1]$, the nuclear norm can scale as $n \sqrt{r}$; resulting in a bound of $\sqrt{\frac{r}{np}}$ (for small enough $p$) \cite{Chatterjee15}.
For the setting of rank $r$ matrices with noiseless observations, \cite{KeshavanMontanariOh10a, KeshavanMontanariOh10b} provide an estimator with MSE scaling as $\frac{r}{np}$ for $p = \Omega(1/n)$. This points to the fact that the class of matrices with bounded nuclear norm is more complex than the class of rank $r$ matrices with bounded entries. 
In the setting of low rank graphon estimation (i.e. binary observations), \cite{gao2015rate, klopp2017oracle} show a minimax lower bound 
on the MSE scaling as $\frac{\log r}{pn}$ for small enough $p = \Omega(\log r / n)$; however the existence of a computationally efficient estimator that achieves this lower bound under the more general noise setting of graphon estimation is still an open research direction.

\section{Setup}
\label{sec:model}

\subsection{Model and Assumptions}

Recall that our goal is to estimate the $n \times n$ matrix $F$; $Z$ is a noisy signal of matrix $F$ such that $\E[Z] = F$. The available data is denoted by $(\cE, M)$, where $\cE \subset [n] \times [n]$ denotes the subset of indices for which data is observed, and $M$ is the $n \times n$ data matrix where $M(u,v) = Z(u,v)$ for $(u,v) \in \cE$,  and $M(u,v) = 0$ for $(u,v) \notin \cE$. The observations can be equivalently represented by an directed weighted graph $\cG$ with vertex set  $[n]$, edge set $\cE$, and edge weights given by $M$. We assume that $\{Z(u,v)\}_{(u,v) \in [n]^2}$ are independent random variables across all indices with 
$\E[Z(u,v)] = F(u,v)$, and that the underlying matrix and observations are bounded, i.e. $F(u,v), Z(u,v) \in [0,1]$. We assume a uniform Bernoulli sampling model, where each entry is observed independently with probability $p$, i.e. $\{\Ind((u,v) \in \cE)\}_{(u,v) \in [n]^2}$ are independent Bernoulli$(p)$ random variables.

\paragraph{Latent Variable Model.} Assume that each $u \in [n]$ is associated to a latent feature variable $\theta_u \sim U[0,1]$, which is drawn independently across indices $[n]$ uniformly on the unit interval. We assume that the expected data matrix can be described by the latent function $f$, i.e. $F(u,v) = f(\theta_u, \theta_v)$, where $f: [0,1]^2 \to [0,1]$ is a symmetric bounded function. The symmetry assumption can be easily relaxed but is assumed for ease of notation in the analysis. The latent function $f$ is assumed to be fixed and independent of the dimension $n$. We additionally impose local neighborhood properties that are primarily used in the nearest neighbor portion of the analysis. We will assume that $f$ is Lipschitz, but this assumption can be relaxed as discussed in Section \ref{sec:lvm_disc}.


\paragraph{Low Rank.} We assume that the latent function $f$ has finite spectrum with rank $r$ when regarded as an integral operator, i.e. for any $ \theta_u, \theta_v \in [0,1]$,
\[f(\theta_u, \theta_v) = \sum_{k=1}^r \lambda_k q_k(\theta_u) q_k(\theta_v),\]
where $\lambda_k \in \mathbb{R}$ for $1\leq k\leq r$, and $q_k$ are orthonormal $\ell_2$ functions for $1\leq k \leq r$ such that 
\[\int_0^1 q_k(y)^2 dy = 1 \text{ and } \int_0^1 q_k(y) q_h(y) dy = 0 \text{ for } k \neq h \in [r]. \]
We assume there exists some $B$ such that $\sup_{y \in [0,1]} |q_k(y)| \leq B$ for all $k \in [r]$.
Let $\Lambda$ denote the $r \times r$ diagonal matrix with $\{\lambda_k\}_{k \in [r]}$ as the diagonal entries, 
and let $Q$ denote the $r \times n$ matrix where $Q(k,u) = q_k(\theta_u)$. Since $Q$ is a random matrix depending on 
the sampled $\theta$, it is not guaranteed to be an orthonormal matrix (even though $q_k$ are orthonormal functions). By 
definition, it follows that $F = Q^T \Lambda Q$. Let $r' \leq r$ be the number of distinct valued eigenvalues amongst 
$\{\lambda_k\}_{k\in [r]}$. Let $\tLambda$ denote the $r \times r'$ matrix where $\tLambda(a,b) = \lambda_a^{b-1}$.

The finite spectrum assumption also implies that the model can be represented by latent variables in the $r$ dimensional Euclidean space, where the latent variable for node $i$ would be the vector $(q_1(\theta_i), \dots q_r(\theta_i))$, and the latent function would be bilinear, having the form $$f(\vec{q}, \vec{q}') = \sum_k \lambda_k q_k q'_k = q^T \Lambda q'.$$ This condition also implies that the expected matrix $F$ is low rank, which includes scenarios such as the mixed membership stochastic block model and finite degree polynomials. The function $f$ is fixed with respect to $n$, the rank $r$ is assumed to be finite in the low rank setting.

The mixed membership model for network data can be represented with a finite spectrum latent variable model. Each coordinate is associated to a vector $\pi \in \Delta_r$, sampled iid from a distribution $P$. For two nodes with respective types $\pi$ and $\pi'$, the observed interaction is $f(\pi, \pi') = \sum_{ij} \pi_i \pi'_j B_{ij} = \pi^T B \pi'$,
where $B \in [0,1]^{r \times r}$ and assumed to be symmetric. Since $B$ is symmetric, there exists a diagonal decomposition $B = U \tilde\Lambda U^T$ with $u_k$ denoting the eigenvectors, such that $f(\pi, \pi') = \sum_{k=1}^r \tilde\lambda_k u_k^T \pi u_k^T \pi'$. It follows from this decomposition that the Hilbert-Schmidt integral operator associated to function $f: \Delta_r \times \Delta_r \to [0,1]$ has finite spectrum with rank at most $r$.

Interaction data arising from symmetric finite degree polynomials also leads to finite spectrum latent variable models. Let $f(x, y)$ be a finite degree symmetric polynomial, represented by $f(x,y) = \sum_{i=0}^r \sum_{j=0}^r c_{ij} x^i y^j$, where $c_{ij} = c_{ji}$ for all $ij$. Let $\bx = (1, x, x^2, \dots x^r)$ and $\by = (1, y, y^2, \dots y^r)$, and let $C$ denote the $(r+1)\times(r+1)$ matrix with entries $[c_{ij}]$, so that $f(x,y) = \bx^T C \by$. Since $C$ is symmetric, there exists a diagonal decomposition $B = U \tilde\Lambda U^T$ with $u_k$ denoting the eigenvectors, such that $f(x, y) = \sum_{k=1}^r \tilde\lambda_k u_k^T \bx u_k^T \by$. It follows from this decomposition that the Hilbert-Schmidt integral operator associated to function $f$ has finite spectrum with rank at most $r$.

\paragraph{Approximately Low Rank.}
More generally, we shall consider approximately low-rank $f$ cf. \cite{udell2019big}. 
Specifically, for a given $\varepsilon > 0$, a symmetric function $f$ is said to have $\varepsilon$-approximate rank $r$ if 
\begin{align}\label{eq:approx.lowrank}
\sup_{\theta_u, \theta_v \in [0,1]} \big|f(\theta_u, \theta_v) -\sum_{k=1}^r \lambda_k q_k(\theta_u) q_k(\theta_v) \big| \leq \varepsilon,
\end{align}
where $\lambda_k \in \mathbb{R}$ for $1\leq k\leq r$, and $q_k$ are orthonormal $\ell_2$ functions for $1\leq k \leq r$.
In this case, it follows that $F = Q^T \Lambda Q + \beps$ where $\beps = [\varepsilon_{ij}] \in \Reals^{n \times n}$ is such that $\max_{ij} |\varepsilon_{ij}| \leq \varepsilon$. That is, the matrix $F$ is approximately rank $r$. Functions $f$ which do not have finite spectrum, but for which the eigenvalues decay quickly can be shown to have approximately low rank. \cite{Chatterjee15,Xu18} use this observation to analyze the USVT algorithm for latent variable model estimation with Lipschitz, Holder, and Sobolev functions. \cite{udell2019big} also show that any analytic function with bounded derivatives has approximately low rank. Recall again that we assume the function $f$ is fixed with respect to $n$, but we can consider the choice of $\varepsilon$ to be dependent on $n$, so that the $\varepsilon$ approximate rank $r$ would grow with respect to $n$.

\subsection{Discussion on Latent Variable Model} \label{sec:lvm_disc}


The latent variable model assumes a random generative model on the underlying matrix $F$, as opposed to the typical deterministic incoherence style conditions found in the literature. The generative model assuming i.i.d. sampled latent variables and boundedness of the eigenfunctions of $f$ guarantee similar properties as incoherence with high probability, as any single row or column will not dominate the signal in a way that deviates too much from the typical values of $f$. The i.i.d. sampling assumption on the latent variables is used in analyzing the local neighborhoods of the observation graph, however this assumption can likely be replaced by regularity assumptions over the empirical distribution of the latent factors for large $n$, e.g. if the latent factors are close to a typical sample set from a well-behaved underlying distribution.

The Lipschitzness assumption of $f$ together with the assumption that $\theta_u \sim U[0,1]$, guarantees that for any given $u \in [n]$ there are sufficiently many other coordinates $v \in [n]$ such that the observed entries are similar across both rows or columns. These assumptions can be relaxed as long as the key property of ``sufficiently many similarly behaving coordinates'' is maintained. As examples, a piecewise Lipschitz function $f$ or a setting with finite latent types would also satisfy the needed local neighborhood properties. Similarly, the scalar assumption on the latent variables and the uniform distribution $U[0,1]$ are not crucial and can be relaxed to i.i.d. sampled random latent vectors from a larger class of distributions. The critical conditions to maintain are the finite spectrum of $f$, boundedness of eigenfunctions, and local neighborhood properties. The local measure needs to be concentrated enough relative to the rate of change in the function $f$ so that when $n$ points are sampled from the space, there are sufficently many ``nearby neighbors'' for whom the function behaves similarly for any given point we would want to estimate. This primarily affects the nearest neighbor portion of the algorithm and analysis. \cite{LiShahSongYu20} also provides a formal discussion and results for extending the nearest neighbor analysis to accommodate settings beyond scalar Lipschitz functions. Our model can also be extended to asymmetric matrix settings and categorical data. Section \ref{sec:thm_extensions} discuss how our theorem extend to some of these model variations.

\subsection{Goal}

The goal is to produce $\hat{F}$, an estimate of $F$, using observation matrix $M$ and knowledge of $\cE$. We measure the estimation error through the maximum entry-wise error and the mean squared error. The maximum entry-wise error or $\infty$-norm of the error matrix $\hat{F} - F$ is defined as
\begin{align}
\| \hat{F} - F\|_{\max} & = \max_{u, v} | \hat{F}(u, v) - F(u,v)|. 
\end{align}
We will provide bounds on this that hold with high probability, that is, with probability converging to $1$ as $n\to \infty$. The mean squared error (MSE) is defined as
\begin{align}
\MSE(\hat{F}) & = \frac{1}{n^2} \mathbb{E}\Big[ \sum_{u, v} (\hat{F}(u, v) - F(u,v))^2\Big].
\end{align}
In measuring error either with high probability or in expectation, the randomness is considered over the data generation process.


\section{Algorithm}
\label{sec:alg}

We propose and analyze a variation of the similarity based collaborative filtering algorithm. At its core, the collaborative filtering algorithm attempts to produce
the estimate $\hat{F}(u,v)$ by averaging over observed entries $F(u',v')$ for a subset of tuples $(u',v')$ such that $u'$ is ``similar" to $u$ and $v'$ is ``similar" to $v$. 

\medskip
\noindent{\em Sample Splitting.} To state the precise algorithm, for technical reasons, we shall use sample splitting. Recall that $\cE \subset [n]^2$ denotes
the set of indices for which we observe noisy signals of $F(u,v)$, i.e. for each $(u,v) \in \cE$, $M(u,v) = Z(u,v)$ where $\E[Z(u,v)] = F(u,v)$. We assumed that $\cE$ is generated 
according to a Bernoulli($p$) sampling model, i.e. for each $(u, v) \in [n]^2$, it belongs to $\cE$ with probability $p$ independently. We split the samples $\cE$ into three subsets as follows: for each tuple or edge $(u,v) \in \cE$, with probability $1/4$ it is placed in $\cEp$, with probability $1/4$ it is placed in $\cEpp$, and
with the remaining $1/2$ probability it is placed in $\cEppp = \cE \backslash (\cEp \cup \cEpp)$. 

We will use additional ``virtual'' edges that will aid in estimating the distance as part of the algorithm. To that end, 
note that conditioned on the edge set $\cEp$, for some $(u,v) \notin \cEp$, $\Prob{(u,v) \in \cEpp | \cEp} = \frac{p}{4-p} = \pp$.
Furthermore, conditioned on $\cEp$, $\Ind((u,v) \in \cEpp)$ are independent random variables. Conditioned on $\cEp$, we generate a random subset $\cEpind \subseteq \cEp$ such that 
each $(u,v)\in \cEp$ is included in $\cEpind$ independently with probability $\pp = \frac{p}{4-p}$. 
Therefore, conditioned on $\cEp$, the set $\cEpind \cup \cEpp$ is distributed according to a Bernoulli($\pp$) sampling model, where each $(u,v) \in [n]^2$ are included in $\cEpind \cup \cEpp$ independently with probability $\pp$.


For each $u, v \in [n]$, define $\Mp(u,v) = \Ind((u,v) \in \cEp) M(u,v)$, $\Mpind(u,v) = \Ind((u,v) \in \cEpind) M(u,v)$, ~$\Mpp(u,v) = \Ind((u,v) \in \cEpp) M(u,v)$, and 
$\Mppp(u,v) = \Ind((u,v) \in \cEppp) M(u,v)$; let $\Mp = [\Mp(u,v)]$, $\Mpind = [\Mpind(u,v)]$, $\Mpp = [\Mpp(u,v)]$ and $\Mppp = [\Mppp(u,v)]$
denote the associated $n\times n$ matrices. Note that $\Mpind$ is strictly contained within $\Mp$ as $\cEpind \subseteq \cEp$. The algorithm will use observations $\Mp$ and $\Mpp$ to producing distance estimates $\hat{d}$, and it uses observations $\Mppp$
 to produce the final estimate $\hat{F}$ given $\hat{d}$. 

\medskip
\noindent
{\em Noisy Nearest Neighbor Algorithm.} We consider the following noisy nearest neighbor algorithm described below, followed by three different subroutines to compute distances depending on the sparsity regime of the dataset. \\
(1) Compute distances $\hat{d}(u,v)$ between pairs of coordinates $u,v \in [n]^2$ using  $\Mp$ and $\Mpp$. \\
(2) For each $u, v \in [n]^2$, produce an estimate
\begin{align}
\hat{F}(u,v) & = \tfrac{1}{|\cEppp_{uv}|} \textstyle\sum_{(a,b) \in \cEppp_{uv}} \Mppp(a,b), \label{eq:estimate}
\end{align}
where $\cEppp_{uv} = \{(a,b) \in \cEppp ~:~ \hat{d}(u,a) < \eta, \hat{d}(v,b) < \eta\}$ for some small enough $\eta > 0$.

We will choose the threshold $\eta = \eta(n)$ depending on the local geometry of the latent feature space with 
respect to $\hat{d}(u,v)$, in order to guarantee that $\eta(n)$ is small enough to drive the bias to zero, yet large enough to 
ensure $|\cEppp_{uv}|$ diverges so that the variance due to observation noise is small.  The key part of the algorithm is determining how to estimate 
the distances $\hat{d}(u,v)$. In what follows, we describe three variations depending upon the observation density, $p$. 

\subsection{Estimating Distance $\hat{d}$}

\medskip
\noindent
{\em Dense Regime.} When $p = \omega(n^{-\frac12})$, it is feasible to compute distances by simply looking 
at the overlapping entries; this is popularly done in practice \cite{goldberg92} as well as analyzed theoretically in the recent works 
\cite{ZhangLevinaZhu15, LeeLiShahSong16}. For any $(u,a) \in [n]^2$, 
\begin{align}
\hat{d}(u,a) = \tfrac{1}{|\cO_{ua}|} \textstyle\sum_{y \in \cO_{ua}} (M(u,y) - M(a,y))^2,
\end{align}
where $\cO_{ua} = \{y \in [n] : (u,y), (a,y) \in \cEp\}$. This is a finite sample approximation of $\int_0^1 (f(\theta_u, y) - f(\theta_v,y))^2~dy$. 
When $p = \omega(n^{-\frac12})$, it follows that $|\cO_{ua}| = \omega(1)$ for all $u, a \in [n]^2$ with high probability, so that $\hat{d}(u,a) \approx \int_0^1 (f(\theta_u, y) - f(\theta_v,y))^2~dy$. \cite{LeeLiShahSong16} subsequently prove that for any Lipschitz latent function $f$ the MSE decays to $0$ as $n\to\infty$ as long as $p = \omega(n^{-\frac12})$. The arguments of \cite{LeeLiShahSong16} 
can be adapted to show that the maximum entry-wise error decays to $0$ with high probability as well.  However, for $p = o(n^{-\frac12})$, for 
most $u, a \in [n]^2$, $\cO_{ua} = \emptyset$ with high probability and hence a different approach is needed -- overcoming the sparse regime is the primary interest of this work. 

\medskip
\noindent
{\em Sparse Regime.} Consider the sparse regime where $p = n^{-1 + \kappa}$ for any $\kappa \in (0, \frac12)$; in this regime the overlap is small and thus new distance estimates are required. Recall that the function $f$ has finite spectrum, i.e. $f(\theta_u, \theta_v) = \sum_k \lambda_{k=1}^r q_k(\theta_u) q_k(\theta_v)$. We 
propose an estimator which approximates $d(u,v) = \|\Lambda^{t} Q (e_u - e_v)\|_2^2$ by comparing depth $t$ neighborhoods of $u$ 
and $v$ in the data graph ${\cal G} = ([n], \cEp)$. Specifically, let the weight of an edge $(a, b) \in \cEp$ in graph $\cG$ be the observed value $M(a, b)$ ($=M^\prime(a,b)$). 
By 
assumption, in expectation this weight equals $F(a, b) = f(\theta_a, \theta_b)$. Therefore, the product of weights along a path from $u$ to $y$, 
of length $t$, denoted as $(u, x_1, \dots, x_{t-1}, y)$ with $(u,x_1), (x_1, x_2), \dots, (x_{t-1}, y) \in \cEp$, in expectation equals 
\begin{align}
&\mathbb{E}_{X_1,\dots, X_{t-1}} \Big[ f(\theta_u, X_1) \times \prod_{s=1}^{t-2} f(X_{s}, X_{s+1}) \times f(X_{t-1}, \theta_y)    | \theta_u, \theta_y \Big] \nonumber \\
&\qquad= \textstyle\sum_{k=1}^r \lambda_k^t q_k(\theta_u) q_k(\theta_y) \nonumber \\
&\qquad= e_u^T Q^T \Lambda^t Q e_y.
\end{align}
Therefore, the product of weights along the path connecting $u$ to $y$ is a good proxy of quantity $e_u^T Q^T \Lambda^t Q e_y$. Recall that 
each entry is observed independently with probability $p$ due to our assumed Bernoulli sampling model. Therefore, for any $u \in [n]$, the number of neighbors of $u$ in $\cG$ 
scale as $pn = n^{\kappa}$. More generally, for $1\leq t \leq 1/\kappa$, the number of nodes at distance $t$ from $u$ scale as $n^{ \kappa t}$. 
We choose $t$ large enough to guarantee that for any two nodes $u$ and $v$, there is a sufficient overlap between the two subset of nodes at distance $y$ from nodes $u$ and $v$ respectively. This suggests that we choose $t$ so that $n^{\kappa t} \approx n^{\frac12}$, which in effect aggregates enough data in the sparse regime to match the expected number of observations per row in the dense regime. We formalize this intuition in the following construction of the distance estimates. 

Let $\cS_{u,s}$ denote the set of vertices which are at distance $s$ from vertex $u$ in the graph defined by edge set $\cEp$. Specifically, $i \in \cS_{u,s}$ if the shortest path in $\cG = ([n], \cEp)$ from $u$ to $i$ has a length of $s$.  Let $\cT_u$ denote a breadth-first tree in $\cG$ rooted at vertex $u$. The breadth-first property ensures that the length of the path from $u$ to $i$ within $\cT_u$ is equal to the length of the shortest path from $u$ to $i$ in $\cG$. Let $\cT_u^t \subset \cT_u$ denote the sub-tree containing all
nodes and edges in $\cT_u$ up to and including depth $t$. If there is more than one valid breadth-first tree rooted at $u$, choose one uniformly at random. Let $N_{u,t} \in [0,1]^n$ denote the following vector with support on the boundary of the depth-$t$ neighborhood of vertex $u$ (we also call $N_{u,t}$ the neighborhood boundary):
\begin{align*}
N_{u,t}(i) = \begin{cases} \textstyle\prod_{(a,b) \in \text{path}_{\cT_u}(u,i)} M^\prime(a,b) &\mbox{if } i \in \cS_{u,t}, \\
0 & \mbox{if } i \notin S_{u,t}, \end{cases}
\end{align*}
where $\text{path}_{\cT_u}(u,i)$ denotes the set of edges along the path from $u$ to $i$ in the tree $\cT_u$. The sparsity of $N_{u,t}(i)$ is equal to $|\cS_{u,t}|$, and the value of the coordinate $N_{u,t}(i)$ is equal to the product of weights along the path from $u$ to $i$. Let $\tN_{u,t}$ denote the normalized neighborhood boundary such that $\tN_{u,t} = N_{u,t} / |\cS_{u,t}|$. 
For each tuple $(u,v) \in [n]^2$, compute $\hat{d}(u,v)$ according to
\begin{align}
\hat{d}(u,v) & = \big(\tfrac{1}{\pp}\big) \big(\tN_{u,t} - \tN_{v,t}\big)^T (\Mpp + \Mpind) \big(\tN_{u,t + 1} - \tN_{v,t + 1}\big). \label{eq:dist1}
\end{align}

\medskip
\noindent
{\em Sparser Regime.} Consider the even sparser regime where $p =  n^{-1} \ln^{1 +\kappa} n$ for some $\kappa > 0$. Let us assume that the algorithm knows the eigenvalues $\{\lambda_k\}_{k \in [r]}$. Recall that $r' \leq r$ denotes the number of distinct valued eigenvalues amongst 
$\{\lambda_k\}_{k\in [r]}$. Recall that $\Lambda$ is the diagonal matrix with $\Lambda_{kk} = \lambda_k$, and $\tLambda$ is the $r \times r'$ Vandermonde matrix where $\tLambda(a,b) = \lambda_a^{b-1}$. Let $z \in \Reals^{r'}$ be the vector that satisfies $\Lambda^{2 t + 2} \tLambda z = \Lambda^2 \bOne$; $z$ always exists and is unique because $\tLambda$ is a Vandermonde matrix, and $\Lambda^{-2t} \bOne$ lies within the span of its columns. For every $(u, v) \in [n]^2$, compute distance according to
\begin{align}\label{eq:dist2}
\hat{d}(u,v) & = \big(\tfrac{1}{\pp}\big) \textstyle\sum_{\ell \in [r']} z_{\ell} \big(\tN_{u,t} - \tN_{v,t}\big)^T (\Mpp + \Mpind) \big(\tN_{u,t + \ell} - \tN_{v,t + \ell}\big).
\end{align}

\subsection{Reducing computation by subsampling vertices} \label{sec:alg_subsampling}


The pairwise distances can only be estimated up to a limited precision depending on the sparsity of the data and amount of noise in the observations, and furthermore we tune the nearest neighbor threshold to tradeoff between bias and variance. As a result, the performance of the algorithm can be maintained with reduced computation by clustering the coordinates so that not all $n^2$ pairwise distances need to be computed. This would involve adding an extra step at the beginning of the algorithm that samples sufficiently many ``anchor'' vertices $\cK \subset [n]$ that cover the space well. $|\cK|$ should be chosen large enough such that for any vertex $u \in [n]$, there exists some anchor vertex $i \in \cK$ which is ``close'' to $u$ in the sense that $\|\Lambda Q (e_u - e_i)\|_2^2$ is small. For all $n$ vertices, we only compute the distances to each of the $|\cK|$ anchor vertices, and we let $\pi: [n] \to \cK$ be a mapping from each vertex to the anchor vertex that minimizes the estimated distance $\hat{d}$ as computed in the original algorithm statement, $\pi(u) = \argmin_{i \in \cK} \hat{d}(u,i)$. The final estimate then is given by 
\[\hat{F}(u,v) = \hat{F}(\pi(u), \pi(v)) = \frac{1}{|\cE_{\pi(u)\pi(v)}|} \sum_{(a,b) \in \cE_{\pi(u)\pi(v)}} \Mppp(a,b),\]
where $\cE_{\pi(u)\pi(v)}$ denotes the set of undirected edges $(a,b)$ such that $(a,b) \in \cE_3$ and both $\hat{d}(\pi(u),a)$ and $\hat{d}(\pi(v),b)$ are less than some threshold $\eta$. We can compute $\cE_{\pi(u) \pi(v)}$ by the clustering assignments and distances of all vertices to the anchor vertices.

\subsection{Computational Complexity}


To analyze the computational complexity of the algorithm, we consider each step. Growing local neighborhoods around each vertex costs at most $n |\cE|$, since there are $n$ vertices and the BFS trees visit each edge at most once. Computing the inner product for all pairs of vertices given the local neighborhood vectors costs at most $n^2 |\cE|$, since there are $n^2$ vertex pairs and $|\cE|$ entries in the data matrix $M$. The final nearest neighbor estimator involves a (weighted) average of the datapoints, which costs at most $n^2 |\cE|$, as there are $n^2$ entries in the matrix to estimate, and at worst the estimate would involve averaging over $|\cE|$ datapoints. This extremely crude bound leads to a computational complexity of $O(pn^4)$. The bottleneck of the algorithm is the final nearest neighbor estimate, which may be reduced by using approximate nearest neighbor methods.

If we instead used the modified algorithm that subsamples $|\cK|$ anchor vertices at random and treats them as ``cluster centers'', there are only $(|\cK|^2 + n|\cK|)$ pairwise distances computed, for a computational cost of $(|\cK|^2 + n|\cK|) |\cE|$ instead of $n^2 |\cE|$. Once we cluster the vertices, the final estimate is only computed for the pairwise cluster blocks, as the final estimate is a block constant matrix with only $|\cK|^2$ distinct valued estimates. This results in $|\cK|^2 |\cE|$ computation for the final step of the estimation. The computational complexity reduces from $O(n^2 |\cE|)$ to $O((|\cK|^2 + n|\cK|) |\cE|)$. The choice of $|\cK|$ depends on the distribution of latent variables, the shape of the latent function, and the error tolerance. In a setting with finitely many latent types, then $|\cK|$ would be roughly linear in the number of latent types.

A practical benefit of our algorithm is that it is amenable to a distributed and parallelized implementation. The key computational step of our algorithm involves comparing the expanded local neighborhoods of pairs of vertices to find the ``nearest neighbors''. As the algorithm is inherently local with respect to the data graph, it can be easily implemented for large scale datasets where the data may be stored in a distributed fashion optimized for local graph computations. The local neighborhoods can be computed in parallel, as they are independent computations. Using approximate nearest neighbor techniques and subsampling vertices to cluster will additionally reduce the computation.


\subsection{Discussion}


In practice, we may not know the model parameters, and we would use cross validation to tune the BFS tree depth $t$ and nearest neighbor threshold $\eta$. If the depth $t$ is either too small or too large, then the vector $N_{u,t}$ will be too sparse, and will not optimally aggregate the datapoints. The threshold $\eta$ trades off between bias and variance of the final estimate. When the sampled observations are not uniform across entries, the algorithm may require more modifications to properly normalize for high degree hub vertices, as the optimal choice of depth $t$ may differ depending on the local sparsity.

In our algorithm, we assumed that we observed the edge set $\cE$. Specifically, this means that we are able to distinguish between entries of the matrix that have value zero because they are not observed, i.e. $(i,j) \notin \cE$, or if the entry was observed to be value zero, i.e. $(i,j) \in \cE$ and $M(i,j) = Z(i,j) = 0$. This fits well for applications such as recommendations, where the system does know the information of which entries are observed or not. Some social network applications contain this information (e.g. facebook would know if they have recommended a link which was then ignored) but other network information may lack this information, e.g. we do not know if link does not exist because observations are sparse, or because observations are dense but the probability of an edge is small. The absence of this knowledge would primarily affect the normalization of the neighborhood vectors as well as the normalization in the final averaging step.

The idea of comparing vertices by looking at larger radius neighborhoods was introduced in \cite{AbbeSandon15a}, and has connections to belief propagation \cite{DecelleKrzakalaMooreZdeborova11, AbbeSandon16} and the non-backtracking operator \cite{KrzakalaMooreMosselNeemanSlyZdeborovaZhang13, SaadKrzakalaZdeborova14, MosselNeemanSly14, Massoulie13, BordenaveLelargeMassoulie15}. The non-backtracking operator was introduced to overcome the issue of sparsity. For sparse graphs, vertices with high-degree dominate the spectrum, such that the informative components of the spectrum get hidden  behind the high degree vertices. The non-backtracking operator avoids paths that immediately return to the previously visited vertex in a similar manner as belief propagation, and its spectrum has been shown to be more well-behaved, perhaps adjusting for the high degree vertices, which get visited very often by paths in the graph. In our algorithm, the neighborhood paths are defined by first selecting a rooted tree at each vertex, thus enforcing that each vertex along a path in the tree is unique. This is important in our analysis, as it guarantees that the distribution of vertices at the boundary of each subsequent depth of the neighborhood is unbiased, since the sampled vertices are freshly visited.
\section{Results}
\label{sec:results}

In all of the results below, we assume the latent variable model assumptions laid out in Section \ref{sec:model}. As a reminder, we assume uniform Bernoulli sampling with density $p$, independent bounded observation noise, and a generative latent variable model where coordinates are associated to i.i.d. sampled latent variables and the underlying matrix behaves according to a bounded latent function $f$ that is Lipschitz and low rank (or approximately low rank) with bounded eigenfunctions.

\subsection{$f$ has rank $r$} We first provide theoretical bounds for the estimation error in both sparse regimes mentioned above when $f$ has finite spectrum with rank $r$.  

\medskip 
\noindent
{\em Sparse Regime.} Theorem \ref{lemma:case1_assumption} shows that the maximum entrywise error of the collaborative filtering algorithm using distance function \eqref{eq:dist1} converges to zero in the sparse regime when $p = n^{-1 + \kappa}$ for some $\kappa \in (0,\frac12)$. 
\begin{theorem} \label{lemma:case1_assumption}
Let $f$ have rank $r$, $p = n^{-1 + \kappa}$ for some $\kappa \in (0,\frac12)$ so that $1/\kappa$ is not an integer. 
Consider the estimates produced by the nearest neighbor algorithm using the distance defined in \eqref{eq:dist1} for $t = \lfloor\frac{\ln(1/p)}{\ln(np)}\rfloor$ and selecting the nearest neighbor distance threshold to satisfy $\eta = \Theta(n^{ - \frac12 (\kappa - \rho)})$ for any 
$\rho \in (0, \kappa)$. Let $C_f = |\lambda_1|/|\lambda_r|$ denote the condition number of the latent function $f$. With probability $1-o(1)$, 
\begin{align}
\|\hat{F} - F\|_{\max} & = O\Big(r C_f^{1/\kappa} n^{ - \frac14 (\kappa - \rho)}\Big).
\end{align}
Furthermore, 
\begin{align}
\MSE(\hat{F}) & = \frac{1}{n^2} \|\hat{F} - F\|_{Fr}^2 = O\Big(r^2 C_f^{2/\kappa} n^{ - \frac12 (\kappa - \rho)}\Big).
\end{align}
\end{theorem}

\medskip 
\noindent
\medskip
\noindent
{\em Sparser Regime.} Theorem \ref{lemma:case2_assumption} shows that the maximum entrywise error of the collaborative filtering algorithm using distance function \eqref{eq:dist2} converges to zero in the sparser regime when $p =  n^{-1} \ln^{1 +\kappa} n$ for some $\kappa > 0$. 
\begin{theorem} \label{lemma:case2_assumption}
Let $f$ have rank $r$, $p =  n^{-1} \ln^{1 +\kappa} n$ for some $\kappa > 0$. Consider the estimates produced by the nearest neighbor algorithm using the distance defined in \eqref{eq:dist2} for $t = \lceil \frac{\ln (0.08/p) }{\ln (0.275 np)} - r' \rceil$ and selecting the nearest neighbor distance threshold to satisfy $\eta = \Theta\Big((\ln n)^{ - \frac12 (\kappa - \rho)}\Big)$ for any 
$\rho \in (0, \kappa)$. With probability $1-o(1)$, 
\begin{align}
\|\hat{F} - F\|_{\max} & = O\Big((\ln n)^{-\frac{1}{4}(\kappa -\rho)}\Big). 
\end{align}
Furthermore, 
\begin{align}
\MSE(\hat{F}) & = O\Big((\ln n)^{-\frac{1}{2}(\kappa - \rho)} n\Big).
\end{align}
\end{theorem}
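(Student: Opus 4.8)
\noindent\emph{Proof plan.} The argument has three layers: (i) reduce $\|\hat F-F\|_{\max}$ to the uniform accuracy of the distance estimates $\hat d$; (ii) show the normalized neighborhood–boundary vectors satisfy $Q\tN_{u,s}\approx\Lambda^{s}Qe_u$; and (iii) establish a uniform concentration bound for $\hat d$ from \eqref{eq:dist2}, exploiting the choice of $z$. For (i), fix $(u,v)$. Because $\cEppp$ and $\{Z(a,b):(a,b)\in\cEppp\}$ are independent of everything used to compute $\hat d$ (the purpose of the three–way split), conditionally on $\hat d$ and $\theta$ the set $\cEppp_{uv}$ is a $\Theta(p)$–Bernoulli subsample of $\{a:\hat d(u,a)<\eta\}\times\{b:\hat d(v,b)<\eta\}$ with conditionally independent summands $M(a,b)$ of mean $f(\theta_a,\theta_b)$. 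Hence the entrywise error is at most the averaging bias $\max_{(a,b)\in\cEppp_{uv}}|f(\theta_a,\theta_b)-f(\theta_u,\theta_v)|$ plus a Bernstein noise term of order $\sqrt{\log n}\,|\cEppp_{uv}|^{-1/2}$; using $F=Q^{T}\Lambda Q$, symmetry of $f$, Cauchy--Schwarz and $\sup_y|q_k(y)|\le B$ one bounds the bias by $B\sqrt r\,(\sqrt{d(u,a)}+\sqrt{d(v,b)})$ with $d(u,a)=\|\Lambda Q(e_u-e_a)\|_2^{2}$, so if (iii) gives $\|\hat d-d\|_{\max}<\tfrac12\eta$ uniformly the bias is $O(\sqrt\eta)$. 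Since $r$ is constant and $f$ is piecewise regular, a $d$–ball of radius $\eta$ carries $\theta$–mass at least a positive power of $\eta$, so $|\cEppp_{uv}|=\Omega(pn^{2}\eta^{\Theta(1)})$ is polynomially large and the noise term is $n^{-\Omega(1)}$; choosing $\eta=\Theta((\ln n)^{-\frac12(\kappa-\rho)})$ then gives $\|\hat F-F\|_{\max}=O((\ln n)^{-\frac14(\kappa-\rho)})$, and the MSE bound follows by combining this with the trivial bound $\|\hat F-F\|_{\max}\le1$ on the $o(1)$–probability bad event.

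For (ii), condition on $\cEp$; since $\cEp$ is independent of $\theta$, the labels $\theta_i$ remain i.i.d.\ uniform given the graph $\cG=([n],\cEp)$. One first shows, uniformly over $u$ and $0\le s\le t+r'$ with $t=\lceil\frac{\ln(0.08/p)}{\ln(0.275np)}-r'\rceil$, that $|\cS_{u,s}|$ concentrates around its geometric growth and stays below $0.08/p<n$ (these constants are exactly where the definition of $t$ comes from); since $np=\ln^{1+\kappa}n=\omega(\log n)$, Chernoff gives a per–vertex failure probability $e^{-\Omega(np)}=n^{-\omega(1)}$, small enough to union bound over all $n$ vertices — this is precisely where $\kappa>0$ is needed. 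Working level by level: for a vertex $x$ at level $s-1$ with BFS children $C_x$, one has $\E[\sum_{i\in C_x}q_k(\theta_i)M'(x,i)\mid\theta_x,C_x]=|C_x|\lambda_k q_k(\theta_x)$ by orthonormality of the $q_k$, with fluctuation $O(B^{2}\sqrt{|C_x|})$; summing over $x\in\cS_{u,s-1}$ weighted by $N_{u,s-1}(x)$ and normalizing gives the recursion $(Q\tN_{u,s})_k=\lambda_k(Q\tN_{u,s-1})_k+\xi_{k,s}$ with $\xi_{k,s}$ of scale $O(B^{2}(np)^{-1/2})$, hence $(Q\tN_{u,s})_k=\lambda_k^{s}q_k(\theta_u)+e_{k,s}$ with $|e_{k,s}|=O(|\lambda_k|^{s-1}(np)^{-1/2})$ — one power of $\lambda_k$ below the signal. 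Since the coordinates of the various $N_{u,\cdot}$ share BFS–tree edges, one conditions additionally on the $Z$–values on $\cT_u^{t+r'},\cT_v^{t+r'}$ and controls the remaining fluctuation by bounded differences over the exploration.

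Plugging (ii) into \eqref{eq:dist2} and replacing $\frac1\pp(\Mpp+\Mpind)$ by its conditional mean $Q^{T}\Lambda Q$ — legitimate because the split, with the auxiliary subsample $\cEpind$, makes $\Mpp+\Mpind$ an honest $\pp$–Bernoulli matrix whose entries have conditional mean $\pp F(a,b)$ except on the $O(n/\mathrm{polylog})$ tree edges, whose total contribution to the bilinear form is $O(n^{-1+o(1)})$ as each $\tN$–coordinate is $\le 1/|\cS|$ — one obtains
\begin{align*}
\hat d(u,v)&=\textstyle\sum_{\ell\in[r']}z_\ell\,(Q(e_u-e_v))^{T}\Lambda^{2t+\ell+1}Q(e_u-e_v)+\mathrm{err}\\
&=(Q(e_u-e_v))^{T}\big(\textstyle\sum_{\ell}z_\ell\Lambda^{2t+\ell+1}\big)Q(e_u-e_v)+\mathrm{err},
\end{align*}
and the defining equation $\Lambda^{2t+2}\tLambda z=\Lambda^{2}\bOne$ forces $\sum_{\ell}z_\ell\Lambda^{2t+\ell+1}=\Lambda^{2}$, so the main term equals exactly $d(u,v)=\|\Lambda Q(e_u-e_v)\|_2^{2}$. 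The error has a sampling part of size $\pp^{-1/2}\|\tN_{u,t}\|_2\|\tN_{v,t+\ell}\|_2=n^{-1/2+o(1)}$ (even after summing against $z$), and an $e_{k,s}$ part whose cross term is governed by $\sum_{\ell}z_\ell\lambda_k^{2t+\ell}=\lambda_k$ — one power below the signal's $\lambda_k^{2}$, which is exactly why \eqref{eq:dist2} works where \eqref{eq:dist1} does not — and is $O((np)^{-1/2})=O((\ln n)^{-(1+\kappa)/2})$; a union bound over the $n^{2}$ pairs inflates it by $\sqrt{\ln n}$ to $O((\ln n)^{-\kappa/2})$, which is $o(\eta)$ for $\eta=\Theta((\ln n)^{-\frac12(\kappa-\rho)})$.

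The main obstacle is making (ii)–(iii) simultaneously uniform over all $n$ vertices and $n^{2}$ pairs at density $p=n^{-1}\ln^{1+\kappa}n$: the BFS balls are genuinely large ($\Theta(n/\mathrm{polylog})$) and \emph{not} tree–like, so one must lean on $np=\omega(\log n)$ for $n^{-\omega(1)}$ Chernoff tails (for neighborhood growth and for the empirical orthonormality relations on the boundaries), and carefully manage the dependence between the tree–built vectors $\tN$ and the matrix $\Mpp+\Mpind$ — this is the purpose of introducing $\cEpind$ — using the crude bound $\|\tN_{u,s}\|_\infty\le 1/|\cS_{u,s}|$ to dispose of the $O(n/\mathrm{polylog})$ re–used edges. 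Once the uniform $O((\ln n)^{-\kappa/2})$ accuracy of $\hat d$ is established, layer (i) is routine.
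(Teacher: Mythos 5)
Your proposal follows essentially the same architecture as the paper's proof: layer (i) is the nearest-neighbor reduction of the paper's Lemma~\ref{lemma:nearest_neighbor} together with the Properties \ref{ass:good_distances.1}--\ref{ass:good_distances.3}; layer (ii) is the BFS-growth and martingale concentration of Lemmas~\ref{lemma:nbrhd_growth}--\ref{lemma:nhbrhd_vectors}; and layer (iii) --- replacing $\frac{1}{p'}(M''+M'_{\ind})$ by $F$ via the $\cE'_{\ind}$ re-sampling device, then collapsing $\sum_\ell z_\ell \Lambda^{2t+\ell+1}$ to $\Lambda^2$ via the Vandermonde identity --- is exactly the mechanism of Lemmas~\ref{lemma:NFN_bound}, \ref{lemma:NMN_conc}, \ref{lemma:zNMN_bound}, and \ref{lemma:dist_2}. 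The only cosmetic differences are that you parameterize the union bound by an explicit $\sqrt{\ln n}$ inflation rather than the paper's $x=(\ln n)^{(1+\rho)/2}$ (which is why your claimed $\Delta=O((\ln n)^{-\kappa/2})$ is a hair tighter than the paper's $\Theta((\ln n)^{-\frac12(\kappa-\rho)})$, at the cost of needing care on constants for the $n^2$-fold union bound to deliver $o(1)$ failure probability), and you elide the quantitative bound on $\|z\|_1$ via the Gautschi Vandermonde-inverse estimate, which the paper needs to ensure the $|\lambda_r|^{-2t}$ factors hidden in $z$ do not spoil concentration; both would need to be filled in but neither is a structural gap.
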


Theorems \ref{lemma:case1_assumption} and \ref{lemma:case2_assumption} show that for symmetric sparse matrix estimation, as long as the fraction of entries observed at random scale as {$\frac{\log^{1+\kappa}(n)}{n}$} for any fixed $\kappa > 0$, the estimation error of our proposed iterative variant of the classical collaborative filtering algorithm with respect to the $\max$-norm decays to $0$ as $n\to\infty$ assuming the underlying matrix of interest has {constant} rank $r$.  

\subsection{$f$ has $\varepsilon$-approximate rank $r$}


We extend the above stated result to the setting when the latent function $f$ has $\varepsilon$-approximate rank $r$; this captures settings where $f$ may have infinite but quickly decaying spectrum. We formally state the extension in the sparse regime ($p = n^{-1+\kappa}$), but we believe that a similar result is likely to hold for the sparser regime ($p = n^{-1} \log^{1+\kappa}(n)$) as well, which we omit for simplicity of presentation.

\begin{theorem} \label{lemma:case3_assumption}
Let $f$ have $\varepsilon$-approximate rank $r$ for some $\varepsilon > 0$,  
$p = n^{-1 + \kappa}$ for some $\kappa \in (0,\frac12)$ so that $1/\kappa$ is not an integer. 
Consider the estimates produced by the nearest neighbor algorithm using the distance defined in \eqref{eq:dist1} for 
$t = \lfloor\frac{\ln(1/p)}{\ln(np)}\rfloor < \frac{1}{\kappa} - 1$ and selecting the nearest neighbor distance threshold to satisfy 
$\eta = \Theta(n^{ - \frac12 (\kappa - \rho)})$ for any $\rho \in (0, \kappa)$. Let $C_{f,r} = |\lambda_1|/|\lambda_r|$ denote the condition number of the rank $r$ approximation to the latent function $f$. With probability $1-o(1)$, 
\begin{align}\label{eq:case3.max}
\|\hat{F} - F\|_{\max} & = O\Big(r C_{f,r}^{1/\kappa} 
 n^{ - \frac14 (\kappa - \rho)}\Big)  
+  
O\Big(|\lambda_r|^{-\frac{1}{\kappa}} \sqrt{r} \left(\sqrt{\frac{\eps}{\kappa}} (1+\eps)^{\frac{1}{2\kappa} - \frac12} + \frac{\eps}{\kappa} (1+\eps)^{\frac{1}{\kappa}-\frac32}\right)\Big)
\end{align}
Furthermore, 
\begin{align}\label{eq:case3.mse}
\MSE(\hat{F}) & = O\Big(r^2 C_{f,r}^{\frac{2}{\kappa}}
 n^{ - \frac12 (\kappa - \rho)}\Big) + O\Big(|\lambda_r|^{-\frac{2}{\kappa}} r \left(\frac{\eps}{\kappa} (1+\eps)^{\frac{1}{\kappa} - 1} + \frac{\eps^2}{\kappa^2} (1+\eps)^{\frac{2}{\kappa}-3}\right)\Big).
\end{align}
\end{theorem}

As we assume the function values are bounded in $[0,1]$, we can assume that $\eps \in [0,1]$, such that the dominating terms in \eqref{eq:case3.max} are $O\Big(r C_{f,r}^{1/\kappa} n^{ - \frac14 (\kappa - \rho)}\Big) + O\Big(\sqrt{\eps r |\lambda_r|^{-\frac{2}{\kappa}} \kappa^{-1}}\Big)$, and the dominating terms in \eqref{eq:case3.mse} are $O\Big(r^2 C_{f,r}^{2/\kappa} n^{ - \frac12 (\kappa - \rho)}\Big) + O\Big(\eps r |\lambda_r|^{-\frac{2}{\kappa}} \kappa^{-1}\Big)$.
While we assume the function $f$ is fixed with respect to $n$, when the function $f$ has infinite spectrum, we can choose $\varepsilon$ to decrease with $n$ in order to tradeoff between the two terms in the error bound. Note that the approximate rank $r$ and the approximate condition number $C_{f,r}$ also depend on the choice of $\varepsilon$. In particular the relationship between $\varepsilon$, $r$, and $C_{f,r}$ will depend on the spectrum of $f$ and how quickly the tail decays to zero. Choosing a larger value of $r$ will increase the condition number as $|\lambda_r|$ will be smaller, and it will decrease the approximation error $\varepsilon$. 
Below we present a specific example as a consequence of Theorem \ref{lemma:case3_assumption}. 

\begin{corollary}\label{cor:case3}
Let $p = n^{-1 + \kappa}$ for some $\kappa \in (0,\frac12)$ so that $1/\kappa$ is not an integer. 
Consider $f$ such that for any $r \geq 1$, it has $\eps_r$-approximate rank $r$ with
$|\lambda_r|$ corresponding to rank $r$ approximation with $C_{f, r} = |\lambda_1|/|\lambda_r|$ being the condition 
number such that $|\lambda_1| = O(1)$ and 
\begin{align}\label{eq:cond.aprx}
\lim_{r\to\infty} \eps_r |\lambda_r|^{-2/\kappa} r & = 0.
\end{align}
Then, for any $\delta > 0$, for all $n$ large enough, with probability $1-o(1)$, $\|\hat{F} - F\|_{\max}  = O\Big(\sqrt{\delta}\Big)$.
Further,  $\MSE(\hat{F}) = O\Big(\delta \Big)$. 
\end{corollary}
\paragraph{Proof of Corollary \ref{cor:case3}.} 
For any $\delta > 0$, by \eqref{eq:cond.aprx}, there exists large enough $r = r(\delta)$ such that 
$\eps_r |\lambda_r|^{-2/\kappa} r \leq \delta$.  Due to $|\lambda_1| = O(1)$, $C_{f, r} = O(|\lambda_r|^{-1})$. 
Given choice of $r = r(\delta)$, for $n$ large enough we have 
$ r C_{f,r}^{1/\kappa} n^{ - \frac14 (\kappa - \rho)} \leq \sqrt{\delta}$. By \eqref{eq:case3.max}
of Theorem \ref{lemma:case3_assumption} it follows that $\|\hat{F} - F\|_{\max}  = O(\sqrt{\delta})$ with 
probability at least $1-o(1)$. By \eqref{eq:case3.mse} of Theorem \ref{lemma:case3_assumption}, 
it follows that $\MSE(\hat{F}) = O(\delta)$.
 \hfill \Halmos
 
From Corollary \ref{cor:case3}, it follows that $\|\hat{F} - F\|_{\max}=o(1)$ with probability $1-o(1)$ and $\MSE(\hat{F}) = o(1)$ when the spectrum decays in such a way that $\lim_{r\to\infty} \eps_r |\lambda_r|^{-2/\kappa} r = 0.$

\subsection{Discussion}


In our latent variable model, the latent function $f$ is fixed with respect to $n$, so the max norm of the truth matrix is constant $\|F\|_{\max} = \Theta(1)$, and the Frobenius norm of the truth matrix scales linear with the matrix dimension so that $\frac{1}{n^2} \|F\|_{Fr}^2 = \Theta(1)$. As a result the above stated results also show the convergence rates with respect to the relative errors of the max norm and normalized Frobenius norm. 

The overall proof sketch can be split into two parts. First we prove that the estimated pairwise distances concentrate to a metric computed with respect to the true latent function $f$. Second we prove that given well behaved estimated distances, the nearest neighbor estimate with properly chosen thresholds to balance mean and variance will converge at the above stated rate. This second part of the proof is straightforward and follows the standard proof for any nearest neighbor style algorithm. The crux of the proof is arguing that in sparse settings the computed distances concentrate well. This relies on the uniform sampling assumption, independence of the observation noise, regularity of the latent feature variables, and the finite spectrum assumption of the latent function. The assumptions on the specific distribution of the latent variables and the Lipschitzness of the latent function are in fact primarily used for the second nearest neighbor portion of the proof, and thus can be relaxed. The key property needed is that there are sufficiently many ``nearest neighbor'' coordinates; the precise distribution of the latent variables and shape of the latent function will affect the tuning of the threshold parameter to tradeoff between bias and variance. We provide formal statements for a few variations of the model in Section \ref{sec:thm_extensions}.

In addition to providing bounds on the MSE, our theorem also provides bounds on the maximum entrywise error of the estimate. The rate of our maximum entrywise error is the square root of the MSE rate, which suggests that the error is uniformly spread across all entries. This is a stronger guarantee that the typical MSE bounds found in the literature, and it can be useful for downstream results that use the estimates for decision making such as ranking and recommendations. 


Thus far, we have focused on finding conditions on $p$ that allow for consistent estimation with respect to both the MSE and max entrywise error. Our results also provide the rate at which the error decays. Specifically, our bound for the mean squared error (MSE) scales as $O((pn)^{-1/2 + \rho})$ for any arbitrarily small constant $\rho > 0$,  and our bound for the max entrywise error is $O((pn)^{-1/4 + \rho})$ for any small $\rho$.  



\section{Proof Sketch for Analyzing Noisy Nearest Neighbors}
\label{sec:proofsketch}

As the algorithm uses a fixed radius nearest neighbor estimate, the analysis boils down to arguing that the distance functions as defined in \eqref{eq:dist1} and \eqref{eq:dist2} have certain desired properties that enable the classical nearest neighbor algorithm to be effective. In this section we characterize the needed properties for the convergence of noisy nearest neighbors.

Our algorithm estimates $F(u, v)$, i.e. $f(\theta_u, \theta_v)$, according to \eqref{eq:estimate}, which simply averages over datapoints $M(u', v')$ corresponding to tuples $(u',v')$ for which $u'$ is close to $u$ and $v'$ is close to $v$ according to the estimated distance function $\hat{d}$. This simple nearest neighbor averaging estimator suggests that the last step of the analysis involves choosing the threshold $\eta$ to tradeoff between bias and variance.

The primary desired property is that the data-driven distance estimates $\hat{d}(u,v)$ concentrate around some ideal data-independent distance $d(\theta_u,\theta_v)$ for $d: [0,1]^2 \to \mathbb{R}_+$. We can then subsequently argue that the nearest neighbor estimate produced by \eqref{eq:estimate} using $d(\theta_u,\theta_v)$ in place of $\hat{d}(u,v)$ will yield a good estimate by properly choosing the threshold $\eta$ to tradeoff between bias and variance. The bias will depend on the local geometry of the function $f$ relative to the distances defined by $d$. The variance depends on the measure of the latent variables $\{\theta_u\}_{u \in [n]}$ relative to the distances defined by $d$, i.e. the number of observed tuples $(u',v') \in \cEppp$ such that $d(\theta_u,\theta_{u'}) \leq \eta$ and $d(\theta_v,\theta_{v'}) \leq \eta$ needs to be sufficiently large. We formalize the above stated desired properties. 

\begin{properties}[Good Distance]\label{ass:good_distances.1}
We call an ideal distance function $d: [0,1]^2 \to \RealsP$ to be a $\bias$-good distance function for some $\bias: \RealsP \to \RealsP$ if for any given $\eta > 0$ it follows that 
$|f(\theta_a, \theta_b) - f(\theta_u, \theta_v)| \leq \bias(\eta)$
for all $(\theta_a,\theta_b,\theta_u,\theta_v) \in [0,1]^4$ such that $d(\theta_u,\theta_a) \leq \eta$ and $d(\theta_v,\theta_b) \leq \eta$.
\end{properties}

Property \ref{ass:good_distances.1} follows from choosing an appropriate ideal distance function $d$. In particular we will choose $d$ with respect to the spectral representation of $f$, and the desired property and the expression for $\bias(\eta)$ will follow from the low rank assumption as well as the boundedness of the eigenfunctions.

\begin{properties}[Good Distance Estimation]\label{ass:good_distances.2}
For some $\Delta > 0$, we call distance $\hat{d}: [n]^2 \to \RealsP$ a $\Delta$-good estimate for ideal distance $d: [0,1]^2 \to \RealsP$, if $|d(\theta_u,\theta_a) - \hat{d}(u,a)| \leq \Delta$ for all $(u, a) \in [n]^2$.
\end{properties}

Showing property \ref{ass:good_distances.2} is the crux of the proof and follows from the design of the algorithm along with the assumptions of uniform sampling and the latent variable model. It essentially uses all the model assumptions except for Lipschitzness of $f$.

\begin{properties}[Sufficient Representation]\label{ass:good_distances.3}
The collection of coordinate latent variables $\{\theta_u\}_{u \in [n]}$ is called $\meas$-represented for some
$\meas: \RealsP \to \RealsP$ if for any $u \in [n]$ and $\eta' > 0$, 
$\frac{1}{n} \sum_{a \in [n]} \Ind(d(u, a) \leq \eta') \geq \meas(\eta')$. 
\end{properties}

Property \ref{ass:good_distances.3} is only used for the final step of the nearest neighbor analysis. In particular, as the estimate averages datapoints within an estimated nearby region of the target coordinates, there is a bias variance tradeoff that depends on how the datapoints are locally distributed. In particular, we need to guarantee that for any $(a,b) \in [n]^2$, there exists sufficiently many observed pairs $(u,v) \in [n]^2$ such that the function behaves similarly, i.e. $f(a,b)$ is close to $f(u,v)$. This property follows from our assumption that the latent variables are sampled i.i.d. from $U[0,1]$, and that the function $f$ is $L$-Lipschitz. As discussed in section \ref{sec:model}, these assumptions can be relaxed, but alternative assumptions would need to guarantee property \ref{ass:good_distances.3} for some reasonable local measure function $\meas(\eta)$.

Given the above three properties, we can then prove Lemma \ref{lemma:nearest_neighbor}, which characterizes the error of the noisy nearest neighbor algorithm as a function of the $\bias$ function, $\meas$ function, and estimation error $\Delta$. Section \ref{sec:thm_proofs} uses Lemma \ref{lemma:nearest_neighbor} to establish Theorems \ref{lemma:case1_assumption}, \ref{lemma:case2_assumption}, and \ref{lemma:case3_assumption} by simply showing the three properties for suitable choices of $\bias, \meas,$ and $\Delta$, and tuning $\eta$ accordingly to balance between different terms of the error. Proving that the distance estimates concentrate well, i.e. property \ref{ass:good_distances.2}, is the most involved part of the analysis, which we defer to sections \ref{sec:distance_estimates_proof} and \ref{sec:distance_estimate_approx_proof}. Property \ref{ass:good_distances.1} follows from the low rank assumption and property \ref{ass:good_distances.3} arises from the latent variable model assumptions, in particular the distribution of the latent variables and shape of the latent function.

\begin{lemma} \label{lemma:nearest_neighbor}
Assume that properties \ref{ass:good_distances.1}-\ref{ass:good_distances.3} hold with probability $1-\alpha$ for some $\eta, \Delta,$ and $\eta' = \eta - \Delta$; in particular $d$ is a $\bias$-good distance function, $\hat{d}$ as estimated from $\Mp$ and $\Mpp$ is a $\Delta$-good distance estimate for $d$, and $\{\theta_u\}_{u \in [n]}$ is $\meas$-represented. The noisy nearest neighbor estimate $\hat{F}$ computed according to \eqref{eq:estimate} satisfies 
\begin{align*}
\MSE(\hat{F}) &\leq \bias^2(\eta + \Delta) + \frac{2 }{(1 - \delta) p \left(\meas(\eta-\Delta) n\right)^2 } + \exp\left(-\frac{\delta^2 p \left(\meas(\eta-\Delta) n\right)^2}{4}\right) + \alpha,
\end{align*}
for any $\delta \in (0,1)$. Furthermore, for any $\delta' \in (0,1)$, 
\[\max_{(u,v)\in[n]^2} |\hat{F}(u, v) - f(\theta_u,\theta_v)| \leq \bias(\eta + \Delta) + \delta',\]
with probability at least 
\[1 - n^2 \exp\left(-\tfrac{1}{4}\delta^2 p \left(\meas(\eta-\Delta) n\right)^2\right)
- n^2 \exp\left(-\delta'^2 (1 - \delta) p \left(\meas(\eta-\Delta) n\right)^2\right) - \alpha.\]
\end{lemma}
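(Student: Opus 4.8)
The plan is to reduce the lemma to a conditional bias--variance decomposition of a single estimate $\hat F(u,v)$, using the sample splitting so that the entries averaged in \eqref{eq:estimate} are ``fresh'' relative to the data that went into $\hat d$. Let $\mathcal A$ be the event on which Properties \ref{ass:good_distances.1}--\ref{ass:good_distances.3} hold, so $\Prob{\mathcal A^c}\le\alpha$, and let $\mathcal F$ be the $\sigma$-algebra generated by $\{\theta_u\}_{u\in[n]}$, the edge sets $\cEp,\cEpp,\cEpind,\cEppp$, the noise values $\{Z(a,b):(a,b)\in\cEp\cup\cEpp\}$, and the algorithm's internal coin flips (choice of breadth-first trees). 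Then $\hat d$, the sets $\cEppp_{uv}$, the counts $|\cEppp_{uv}|$, and the event $\mathcal A$ are all $\mathcal F$-measurable, whereas conditionally on $\mathcal F$ the variables $\{Z(a,b):(a,b)\in\cEppp\}$ remain mutually independent with $\E[Z(a,b)\mid\mathcal F]=f(\theta_a,\theta_b)$ and support in $[0,1]$ (this is exactly what sample splitting buys, since the $Z$-entries are independent of one another and of $\cE$ and its partition, and $\hat d$ uses only $\Mp$ and $\Mpp$). Writing $\hat F(u,v)-f(\theta_u,\theta_v)=W_{uv}+R_{uv}$ with $W_{uv}=\tfrac1{|\cEppp_{uv}|}\sum_{(a,b)\in\cEppp_{uv}}(Z(a,b)-f(\theta_a,\theta_b))$ and $R_{uv}=\tfrac1{|\cEppp_{uv}|}\sum_{(a,b)\in\cEppp_{uv}}(f(\theta_a,\theta_b)-f(\theta_u,\theta_v))$, one has $\E[W_{uv}\mid\mathcal F]=0$ and $R_{uv}$ is $\mathcal F$-measurable, so $\E[(\hat F(u,v)-f(\theta_u,\theta_v))^2\mid\mathcal F]=\E[W_{uv}^2\mid\mathcal F]+R_{uv}^2$.

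Next I would bound the two pieces on $\mathcal A$. For the bias, every $(a,b)\in\cEppp_{uv}$ obeys $\hat d(u,a)<\eta$ and $\hat d(v,b)<\eta$, so Property \ref{ass:good_distances.2} gives $d(\theta_u,\theta_a)\le\hat d(u,a)+\Delta<\eta+\Delta$ and likewise $d(\theta_v,\theta_b)<\eta+\Delta$, and then Property \ref{ass:good_distances.1} applied at threshold $\eta+\Delta$ yields $|f(\theta_a,\theta_b)-f(\theta_u,\theta_v)|\le\bias(\eta+\Delta)$ for each averaged term, hence $|R_{uv}|\le\bias(\eta+\Delta)$. For the neighborhood size, consider the ideal ball $\mathcal C'_{uv}=\{(a,b):d(\theta_u,\theta_a)\le\eta-\Delta,\ d(\theta_v,\theta_b)\le\eta-\Delta\}$; on $\mathcal A$, Property \ref{ass:good_distances.2} forces $\mathcal C'_{uv}$ into the candidate set defining $\cEppp_{uv}$, so $|\cEppp_{uv}|\ge|\cEppp\cap\mathcal C'_{uv}|$, while Property \ref{ass:good_distances.3} at $\eta'=\eta-\Delta$ gives $|\mathcal C'_{uv}|\ge(\meas(\eta-\Delta)n)^2$. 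Conditionally on $\{\theta_u\}_{u\in[n]}$ the set $\mathcal C'_{uv}$ is fixed and $|\cEppp\cap\mathcal C'_{uv}|$ is $\mathrm{Binomial}(|\mathcal C'_{uv}|,p/2)$ (each index lands in $\cEppp$ independently with probability $p/2$, independently of $\theta$), so a multiplicative Chernoff bound shows the event $\mathcal B_{uv}=\{|\cEppp_{uv}|<(1-\delta)\tfrac p2(\meas(\eta-\Delta)n)^2\}$ satisfies $\Prob{\mathcal A\cap\mathcal B_{uv}}\le\exp\!\big(-\delta^2 p(\meas(\eta-\Delta)n)^2/4\big)$.

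Then I assemble the bounds. On $\mathcal A\cap\mathcal B_{uv}^c$ we have $\E[W_{uv}^2\mid\mathcal F]=\tfrac1{|\cEppp_{uv}|^2}\sum_{(a,b)\in\cEppp_{uv}}\mathrm{Var}(Z(a,b)\mid\mathcal F)\le\sigma^2/|\cEppp_{uv}|\le\tfrac{2\sigma^2}{(1-\delta)p(\meas(\eta-\Delta)n)^2}$, hence $\E[(\hat F(u,v)-f(\theta_u,\theta_v))^2\mid\mathcal F]\le\bias^2(\eta+\Delta)+\tfrac{2\sigma^2}{(1-\delta)p(\meas(\eta-\Delta)n)^2}$; on $\mathcal A\cap\mathcal B_{uv}$ and on $\mathcal A^c$ I bound the squared error crudely by $1$, contributing $\Prob{\mathcal A\cap\mathcal B_{uv}}$ and $\Prob{\mathcal A^c}\le\alpha$. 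Taking total expectation and averaging $\tfrac1{n^2}\sum_{u,v}$ of identical per-pair bounds gives the claimed MSE bound. For the sup-norm, on $\mathcal A\cap\mathcal B_{uv}^c$ we have $|\hat F(u,v)-f(\theta_u,\theta_v)|\le|W_{uv}|+\bias(\eta+\Delta)$, and conditionally on $\mathcal F$ the quantity $W_{uv}$ is the centred average of $|\cEppp_{uv}|\ge(1-\delta)\tfrac p2(\meas(\eta-\Delta)n)^2$ independent variables each ranging over an interval of length $1$, so Hoeffding gives $\Prob{\,|W_{uv}|>\delta'\mid\mathcal F\,}\le 2\exp\!\big(-\delta'^2(1-\delta)p(\meas(\eta-\Delta)n)^2\big)$; a union bound over the $n^2$ pairs, together with $\sum_{u,v}\Prob{\mathcal A\cap\mathcal B_{uv}}$ and $\Prob{\mathcal A^c}\le\alpha$, yields the stated high-probability bound (the harmless factor of $2$ from the two-sided Hoeffding can be folded into constants or removed with a one-sided estimate).

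The step I expect to be the main obstacle is the measurability/independence bookkeeping underlying the first paragraph: one must verify carefully that conditioning on \emph{all} information used to form $\hat d$ and the neighborhoods---the latent variables, all of $\cEp,\cEpp,\cEpind,\cEppp$ (including the virtual edges $\cEpind$ folded into \eqref{eq:dist1}), the noise on $\cEp\cup\cEpp$, and the tree-selection randomness---leaves the entries averaged by \eqref{eq:estimate} conditionally i.i.d.\ with the correct means, and, for the Chernoff step, that the ideal ball $\mathcal C'_{uv}$ is determined before $\cEppp$ is revealed. Once this conditioning is set up correctly, the bias bound, the conditional variance bound, and the union bound are all routine.
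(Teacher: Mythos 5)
Your proof is correct and follows essentially the same route as the paper's: a conditional bias--variance decomposition enabled by the sample-splitting, the same bias bound via Properties 1 and 2 at threshold $\eta+\Delta$, the same Chernoff bound for $|\cEppp \cap \cV_{uv}|$ at threshold $\eta-\Delta$, and the same Hoeffding-plus-union-bound for the sup-norm. The only (cosmetic) difference is that you make the $\sigma$-algebra $\mathcal F$ and the conditional-independence bookkeeping fully explicit, which the paper leaves implicit; the decomposition $W_{uv}+R_{uv}$ is exactly the paper's split into the averaged bias term and the centered noise term.
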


\paragraph{Proof of Lemma \ref{lemma:nearest_neighbor}.}
Recall that the algorithm uses sample splitting, where $\hat{d}$ is computed using $\Mp$ and $\Mpp$, and the final estimate $\hat{F}$ is computed using $\Mppp$. Therefore, for some $(a,b) \in \cEppp$, the observation $M(a, b) = Z(a, b)$ is independent of $\hat{d}$, and $\E[M(a, b)] = f(\theta_a, \theta_b)$. 
Conditioned on $\cEppp$, by definition of $\hat{F}$ and by assuming properties \ref{ass:good_distances.1} and \ref{ass:good_distances.2}, it follows that
\begin{align*}
\E[(\hat{F}(u, v) - f(\theta_u,\theta_v))^2] 
&= \left(\frac{1}{|\cEppp_{uv}|}\sum_{(a,b) \in \cEppp_{uv}} f(\theta_a,\theta_b) - f(\theta_u,\theta_v) \right)^2 \\
&\qquad + \frac{1}{|\cEppp_{uv}|^2} \sum_{(a,b) \in \cEppp_{uv}} \Var[M(a, b)] \\
&\stackrel{(a)}{\leq} \text{bias}^2(\eta + \Delta) + \frac{1}{|\cEppp_{uv}|}.
\end{align*}
Inequality $(a)$ follows from Properties \ref{ass:good_distances.1}-\ref{ass:good_distances.2}: $|d(u,a) - \hat{d}(u, a)| \leq \Delta$ and 
$\hat{d}(u, a) \leq \eta \implies d(u, a) \leq \eta + \Delta$. By definition $M(a, b) \in [0,1]$ for all $(a, b)$, which implies $\Var[M(a, b)] \leq 1$ for all $(a, b) \in \cEppp$. 
Define ${\cV}_{uv} = \{ (a,b) \in [n]^2 ~:  {d}(u,a) < \eta -\Delta, ~{d}(v, b) < \eta-\Delta\}$.
Assuming property \ref{ass:good_distances.3},
\begin{align*}
|\cV_{uv}| & = |\{a \in [n]: {d}(u,a) < \eta-\Delta\}| ~|\{b \in [n]: {d}(v, b) < \eta-\Delta\}| \\
&\geq \left(\meas(\eta-\Delta) n\right)^2.
\end{align*}
By the Bernoulli sampling model and sample splitting process, each tuple $(a, b) \in [n]^2$ belongs to $\cEppp$ with probability $p/2$ independently. By a straightforward application of Chernoff's bound, it follows that for any $\delta \in (0,1)$, 
\begin{align}
\Prob{|\cEppp \cap \cV_{uv}| \leq \frac{(1 - \delta) p}{2} \left(\meas(\eta-\Delta) n\right)^2} \leq \exp\left(- \frac{\delta^2 p \left(\meas(\eta-\Delta) n\right)^2 }{4}\right).
\label{eq:omega3_V}
\end{align}
Therefore, by assuming property \ref{ass:good_distances.2}, it follows that with probability at least $1 - \exp\left(- \frac{\delta^2 p \left(\meas(\eta-\Delta) n\right)^2 }{4}\right)$,
\begin{align*}
|\cEppp_{uv}|  & = |\{ (a,b) \in \cEppp ~: \hat{d}(u,a) < \eta, ~\hat{d}(v, b) < \eta\}| \\
& \geq |\{ (a,b) \in \cEppp ~: {d}(u,a) < \eta-\Delta, ~{d}(v, b) < \eta-\Delta\}| \\
& = | \cEppp \cap \cV_{uv}| \\
&\geq \frac{(1 - \delta) p}{2} \left(\meas(\eta-\Delta) n\right)^2.
\end{align*}

Define the event $\cH = \{  |\cEppp_{uv}|  \geq \frac{(1 - \delta) p}{2} \left(\meas(\eta-\Delta) n\right)^2| \}$. It follows that 
$\Prob{\cH^c} \leq \exp\left(-\frac{1}{4}\delta^2 p \left(\meas(\eta-\Delta) n\right)^2\right)$. By definition, $F(u,v) = f(\theta_u, \theta_v) \in [0,1]$ 
for all $u, v \in [n]$. Therefore, assuming properties \ref{ass:good_distances.1}-\ref{ass:good_distances.3} hold,
\begin{align*}
&\E[(\hat{F}(u,v) - f(\theta_u,\theta_v))^2] \\
&\leq \E[(\hat{F}(u,v) - f(\theta_u,\theta_v))^2 ~\Big|~\cH ] + \Prob{\cH^c} \\
&\leq \bias^2(\eta + \Delta) + \frac{2}{(1 - \delta) p \left(\meas(\eta-\Delta) n\right)^2 } + \exp\left(-\frac{1}{4}\delta^2 p \left(\meas(\eta-\Delta) n\right)^2\right).
\end{align*}
We add an additional $\alpha$ in the final MSE bound to account for the probability that properties \ref{ass:good_distances.1}-\ref{ass:good_distances.3} are violated.

To obtain the high-probability bound on the maximum entry-wise error, 
note that $M(a, b)$ are independent across indices $(a, b) \in \cEppp$ as well as independent of observations in $\cEp \cup \cEpp$. 
Additionally, the model assumes that $M(a, b), F(a, b) \in [0,1]$, and $\E[M(a, b)] = F(a, b)$ for observed tuples $(a,b)$. By an application of Hoeffding's inequality for bounded, zero-mean independent variables, for any $\delta' \in (0,1)$ it follows that assuming properties \ref{ass:good_distances.1}-\ref{ass:good_distances.3} hold,
\begin{align*}
\Prob{\left. \tfrac{\Big|\sum_{(a,b) \in \cEppp_{uv}} (M(a, b) - F(a,b)) \Big|}{|\cEppp_{uv}|}\geq \delta' ~\right|~ \cH} 
&\leq \exp\left(-\delta'^2 (1 - \delta) p \left(\meas(\eta-\Delta) n\right)^2\right).
\end{align*}
By union bound it follows that 
\[\max_{(u,v)\in[n]^2} |\hat{F}_{uv} - f(\theta_u,\theta_v)| \leq \bias(\eta + \Delta) + \delta', \]
with probability at least 
\[1 - n^2 \exp\left(-\frac{1}{4}\delta^2 p \left(\meas(\eta-\Delta) n\right)^2\right)
- n^2 \exp\left(-\delta'^2 (1 - \delta) p \left(\meas(\eta-\Delta) n\right)^2\right) - \alpha.\]
This completes the proof of Lemma \ref{lemma:nearest_neighbor}.
 \hfill \Halmos

\section{Extensions} \label{sec:thm_extensions}


\subsection{Subsampled Anchor Vertices}

As mentioned in Section \ref{sec:alg_subsampling}, we can reduce the computational complexity of the algorithm by subsampling a set of anchor vertices $\cK$ and only computing pairwise distances relative to the anchor vertices, equivalent to computing a clustering amongst vertices and using that to estimate. For pairs of anchor vertices $(a,b) \in \cK^2$ which we also refer to as cluster centers, the algorithm estimates $\hat{F}(a,b)$ according to the original stated algorithm with no modifications. For $u \notin \cK$, we denote $\pi(u) = \argmin_{i \in \cK} \hat{d}(u,i)$ to be a clustering that maps from $u$ to the closest anchor vertex in $\cK$. The final estimate for $(u,v) \notin \cK^2$ is then given by the estimate of the associated anchor vertices, which act as cluster centers, $\hat{F}(u,v) = \hat{F}(\pi(u), \pi(v))$.

The original argument provides high probability bounds on $|\hat{F}(u,v) - F(u,v)|$ for cluster centers $(u,b) \in \cK^2$, as nothing changed in the algorithm for the cluster centers. The only additional part of the proof is to bound the additional bias for non cluster centers, as $|\hat{F}(u,v) - F(u,v)| \leq |\hat{F}(\pi(u), \pi(v)) - F(\pi(u), \pi(v))| + |F(\pi(u), \pi(v)) - F(u,v)|$. The first term is directly bounded by the current analysis, and the bias from the second term will depend on the size of $|\cK|$. Recall our latent variable model assumption that each vertex $u$ is associated to a latent variable $\theta_u \sum U[0,1]$ such that $F(u,v) = f(\theta_u, \theta_v)$ and $f$ is $L$-Lipschitz with respect to the latent variables. For $|\cK| = \frac{2}{\delta} \log(\frac{1}{\delta})$, with probability at least $1 - \delta$, each interval $[(i-1)\delta, i\delta]$ for $i \in [1/\delta]$ contains at least one anchor point in $\cK$, as the latent variables of these anchor points are chosen at random. Under this good event, then $\max_{u \in [n]} \min_{i\in\cK} |\theta_u - \theta_i| \leq \delta$.

We discuss the results and analysis for the sparse setting when $p = n^{-1 + \kappa}$ for some $\kappa \in (0,\frac12)$, however a similar argument applies for the sparser setting of $p =  n^{-1} \ln^{1 +\kappa} n$ as well. Equation \eqref{eq:ideal.dist1.a} will show that $d(\theta_u,\theta_v) \leq |\lambda_1|^{2t} L^2 |\theta_u - \theta_v|^2$, so that for some $u \in [n]$, the closest anchor point $a \in \cK$ with respect to the latent representation will also satisfy $d(\theta_u, \theta_a) \leq |\lambda_1|^{2t} L^2 \delta^2$. As Property \ref{ass:good_distances.2} guarantees $|d(\theta_u,\theta_a) - \hat{d}(u,a)| \leq \Delta$ for all estimated distances, it follows that $d(\theta_u, \theta_{\pi(u)}) \leq |\lambda_1|^{2t} L^2 \delta^2 + 2 \Delta$ for all $u \in [n]$. By Property \ref{ass:good_distances.1}, $|F(\pi(u), \pi(v)) - F(u,v)| \leq \bias(|\lambda_1|^{2t} L^2 \delta^2 + 2 \Delta)$. We choose $|\cK|$ so that $\delta = \frac{\sqrt{\Delta}}{L |\lambda_1|^{t}}$, and we plug in the choice of $\Delta$ and $t$ from Theorem \ref{lemma:case1_assumption}, resulting in $\delta = B r |\lambda_1|^{(\kappa + 1)/\kappa} L^{-1} n^{ - \frac14 (\kappa - \rho)} = o(1)$ so that $|\cK| = \Theta(n^{\frac14 (\kappa - \rho)})$. This choice of $|\cK|$ will guarantee that the extra added bias does not change the existing guarantees in Theorem \ref{lemma:case1_assumption} by more than a constant.

\subsection{Local Geometry}

We can generalize the latent variable model beyond scalar valued latent variables and Lipschitz latent functions. These assumptions only affect the function $\meas$ in Property \ref{ass:good_distances.3}, and thus it only changes the last portion of the nearest neighbor proof in which we tune the threshold $\eta$ to tradeoff between the bias and variance terms. We present two examples of extending our results to a different local geometry, illustrating the modifications for the sparse setting when $p = n^{-1 + \kappa}$ for some $\kappa \in (0,\frac12)$. 

If there were only $m$ distinct latent types such that $\theta_u \in [m]$ and $p_{\min} = \min_{i \in [m]} \Prob{\theta_u = i} > 0$, then $\meas(\eta')$ could be chosen to be a constant slightly less than $p_{\min}$ for every value of $\eta' > 0$. If the minimum distance measured by $d(\theta_u, \theta_v)$ between any two distinct types $\theta_u \neq \theta_v$ is larger than $2\Delta$, then we can choose $\eta$ to be $\Delta$ so that by Property \ref{ass:good_distances.2}, the algorithm will achieve perfect clustering. In particular, if Property \ref{ass:good_distances.2} holds then no vertex of a different type will have estimated distance less than $\eta$ and $\bias(\eta + \Delta) = 0$. Given this, each type has at least $p_{\min} n$ instances realized, on average. Therefore, for a given $u, v \in [n]$, there are roughly $(p_{\min} n)^2$ entries $(u', v') \in [n] \times [n]$
such that $u, u'$ and $v, v'$ are of the same type. Each of these $(p_{\min} n)^2$ is observed with probability $p$. Therefore, by taking average over these observed entries, the Mean Squared Error should scale as $1/(p (p_{\min} n)^2)$ and the max entry-wise error would scale as $(p (p_{\min} n)^2)^{-1/2}$. In the case that the minimum distance between any two distinct types is less than $\Delta$, then the bias term will still be there and the limiting term is still $\bias(\Delta)$, and thus the convergence rate would be limited by the same rate as stated in Theorem \ref{lemma:case1_assumption}.

Next we discuss a higher dimensional setting. Assume the latent variables are sampled uniformly over a $m$-dimensional hypercube such that $\theta_u \sim U([0,1]^m)$ and the latent function $f$ is $L$-Lipschitz with respect to an underlying metric $d_m$, such that the measure of a ball with radius $\delta$ is $\Theta(\delta^m)$. Property \ref{ass:good_distances.3} would instead hold for $\meas(\eta') = \Theta((\frac{\sqrt{\eta'}}{\lambda^t L})^m)$, resulting in a different choice of threshold $\eta$ to balance between bias and variance. If $m \leq (\kappa + 2)/\kappa$, then the current $\bias(\Delta)$ term dominates such that we would choose $\eta = \Theta(\Delta)$, and the error convergence rate will be the same as that stated in Theorem \ref{lemma:case1_assumption}. For high dimension $m > (\kappa + 2)/\kappa$, we choose the threshold $\eta = \Theta((pn^2)^{-1/(m+1)})$ such that the MSE bound will scale as $\Theta((pn^2)^{-1/(m+1)}) = \Theta(n^{-(1+\kappa)/(m+1)})$ and the max entrywise error bound will scale as $\Theta(n^{-(1+\kappa)/2(m+1)})$.

\subsection{Asymmetric Matrix}

Even though our stated results are for symmetric models, we can transform an asymmetric latent variable model to a symmetric model as long as the row and column dimensions grow proportionally to one another. Consider an $n \times m$ matrix $F$ which we would like to learn, where $F(u,v) = f(\alpha_u, \beta_v) \in [0,1]$, and $f$ has finite spectrum. We can construct a $(n+m)\times(n+m)$ matrix where $F$ is placed on the off-diagonal blocks and the diagonal $n\times n$ and $m\times m$ blocks are set to zero. We can argue that this constructed matrix is sampled form a symmetric latent model, so that we can apply our algorithm and analysis directly.

\subsection{Categorical Valued Data}

If the edge labels are categorical instead of real-valued, then the goal is instead to estimate the distribution over the different categories or labels. This is particularly suitable for a setting in which there is no obvious metric between the categories such that an aggregate statistic such as the expected label would not be meaningful. If the edge labels take values within $m$ category types, we can split the data is split into $m$ different matrices, each containing the information for a separate category (or edge label). For each category or label $\ell \in [m]$, the associated matrix $F_\ell$ represents the probability that each datapoint is labeled with $\ell$, such that $\bP(Z(u,v) = \ell) = F_\ell(u,v) = f_\ell(\alpha_u, \alpha_v)$, where $f$ is a symmetric function having finite spectrum. The algorithm can then be applied to each matrix separately to estimate the probability of each category across the different entries. Since we need the estimates across different categories for the same entry to sum to 1, we can simply let the estimate for the $m$-th category one minus the sum of the estimates for the first $m-1$ categories. To obtain an error bound, we can simply use union bound across the $m-1$ applications of the algorithm, which simply multiplies the error probability by $m-1$. 

\subsection{Non-Uniform Sampling}

We assumed a uniform sampling model, where each entry is observed independently with probability $p$. However, in reality the probability that entries are observed may not be uniform across all pairs $(i,j)$. Our results can be extended to a setting where the sampling probability is instead a function of the latent variable, i.e. entry $(i,j)$ is observed with probability $c_n g(\theta_i,\theta_j)$ where $g$ is a Lipschitz low rank function independent of $n$ and $c_n$ is a scaling factor governing the density. The observed data $M(i,j)$ would then be sampled according to
\begin{align*}
M(i,j) = \begin{cases}
0 &\text{ with probability } 1 - c_n g(\theta_i,\theta_j) \\
Z(i,j) &\text{ with probability } c_n g(\theta_i,\theta_j).
\end{cases}
\end{align*}
for $\E[Z(i,j)] = f(\theta_i,\theta_j)$. Whereas previously we had $\E[M(i,j)] = p f(\theta_i,\theta_j)$, in this modified model, $\E[M(i,j)] = c_n g(\theta_i,\theta_j) f(\theta_i,\theta_j)$. A limitation of this model is that we need the sampling probabilities to all scale at the same order with respect to $n$. The model is not fully identifiable as we could multiply $c_n$ by a constant and divide $g$ by the same constant and obtain the same data distribution, and thus we can only estimate up to a constant scaling factor.

We can essentially then apply our algorithm twice, first using data matrix $M$ to estimate the product $g(\theta_i,\theta_j) f(\theta_i,\theta_j)$ up to a scaling factor. Second we apply our algorithm to the binary adjacency matrix representing the sparsity of the observation set $\Omega$ in order to estimate $g(\theta_i,\theta_j)$ up to scaling factor. The one nuance one would have to handle is that since the set of observed entries is not uniformly sampled, the constructed BFS trees will grow non-uniformly, which will affect the normalization and scaling terms. As the model is only recoverable up to scaling, this is the best we can do. If we had data from a two-step sampling process in which we first observe binary edges sampled uniformly with probability $c_n$, and then subsequently observed datapoints sampled with an additional probability $g(\theta_i, \theta_j)$, then the model would exactly fall into our assumptions and the results could directly be applied to estimating $g(\theta_i, \theta_j)$ and the product $g(\theta_i,\theta_j) f(\theta_i,\theta_j)$.
\section{Experiments}


We show results on synthetic data to illustrate the performance of our algorithm. We did not do sample splitting as it is primarily introduced for the purpose of the analysis. We computed distances according to equation \eqref{eq:dist1} (but again without sample splitting) for fixed radius parameters of $t \in \{0,1,2,3,4\}$. Note that the depth for expanding the BFS tree is until $t+1$. We did not specifically tune the nearest neighbor threshold $\eta$, but simply chose it to be the 70th percentile amongst all estimated distances. As a result, the expected number of entries used to compute the final weighted average estimate is $0.49 p n^2$. We compare against a naive baseline which predicts using the column-wise mean. And we compare against the softimpute implementation in python's fancyimpute package and alternating least squares with rank 2 from parafac algorithm in the python tensorly package (higher rank performed more poorly in the sparse setting as it overfit to noise). Nuclear norm minimization was too slow for the size of instances that we show and thus was omitted.

The matrix $F$ is generated as follows. For rank $r = 10$, we first sample two Gaussian $n \times r$ latent factor matrices $U \in \Reals^{n \times r}$ and $V \in \Reals^{n\times r}$. Each entry of the latent factor matrices is sampled from an independent Gaussian distribution with mean 10 and standard deviation 10. Next we compute $F$ according to 
\begin{align*}
F = \frac{(UV^T - \text{mean}(UV^T))}{\max(\text{abs}(UV^T))}.
\end{align*}
For a $\kappa \in (0,1]$, the density is chosen to be $p = n^{-1 + \kappa}$, and each entry is observed (and thus included in sample set $\Omega$) with probability $p$ independently of all other entries. For each observed entry $(u,v) \in \Omega$, there is an added independent Gaussian noise $M(u,v) = F(u,v) + \eps(u,v)$, where $\eps(u,v) \sim N(0,\sigma^2)$ where $\sigma$ is chosen to be the 40th percentile of the magnitude of entries in $F$. We show results for $n = 500, 1000,$ and $5000$.

We compute an adjusted mean squared error (MSE), limited to the error in predicting missing entries, and we normalize by the squared error of predicting with zeros. When the adjusted MSE is larger than 1, it means the estimate is worse than predicting all zeros.
\begin{align*}
\text{adjusted MSE} = \frac{\sum_{(u,v) \notin \Omega} (\hat{F}(u,v)-F(u,v))^2}{\sum_{(u,v) \notin \Omega} F(u,v)^2}
\end{align*}

Figure \ref{fig:mse_p} shows the adjusted MSE of the algorithms with respect to the sampling probability $p$. When $p$ is very small, then our algorithm with the optimal choice of the depth parameter $t$ performs better than ALS and SoftImpute, however when it is too sparse than either the simple mean estimate or predicting with all zeros is best. Note that we did not do any tuning of the nearest neighbor parameter $\eta$, and thus there may be additional gains possible for our algorithm. If we consider the minimum density for which the algorithm performs better than the simple mean, SoftImpute requires the most dense observation. The minimum density required for our algorithm depends on optimally choosing the depth parameter $t$, but for an optimal choice of $t$, our algorithm requires less data than ALS before it performance better than the simple mean.

\begin{figure*}[t!]
	\centering
	\begin{subfigure}[t]{0.32\textwidth}
		\centering
		\includegraphics[height=1.7in]{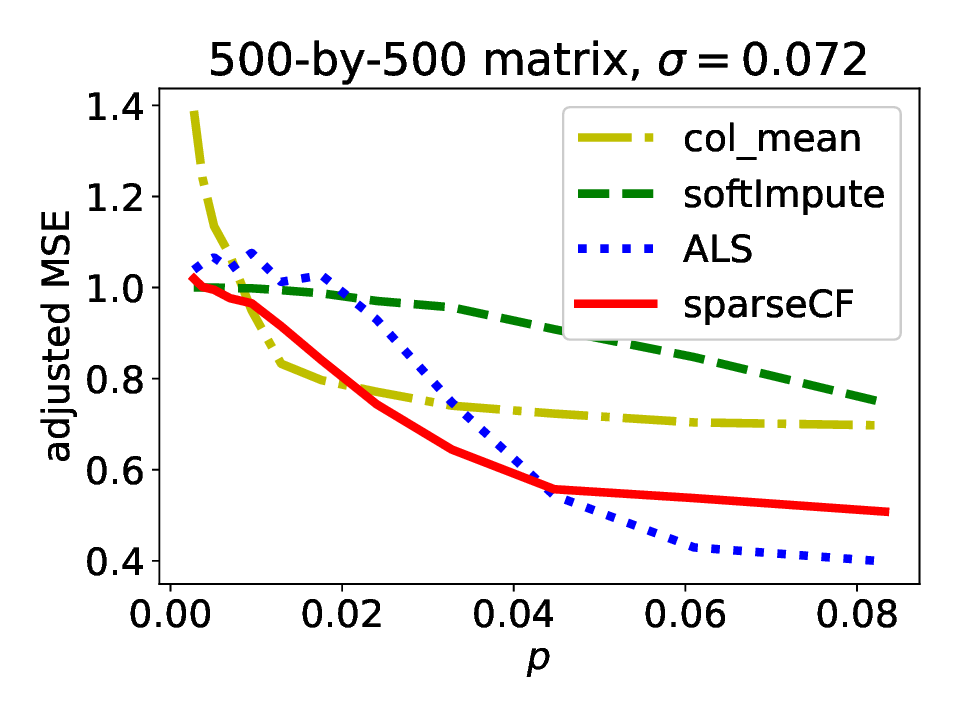}
	\end{subfigure}%
	~ 
	\begin{subfigure}[t]{0.32\textwidth}
		\centering
		\includegraphics[height=1.7in]{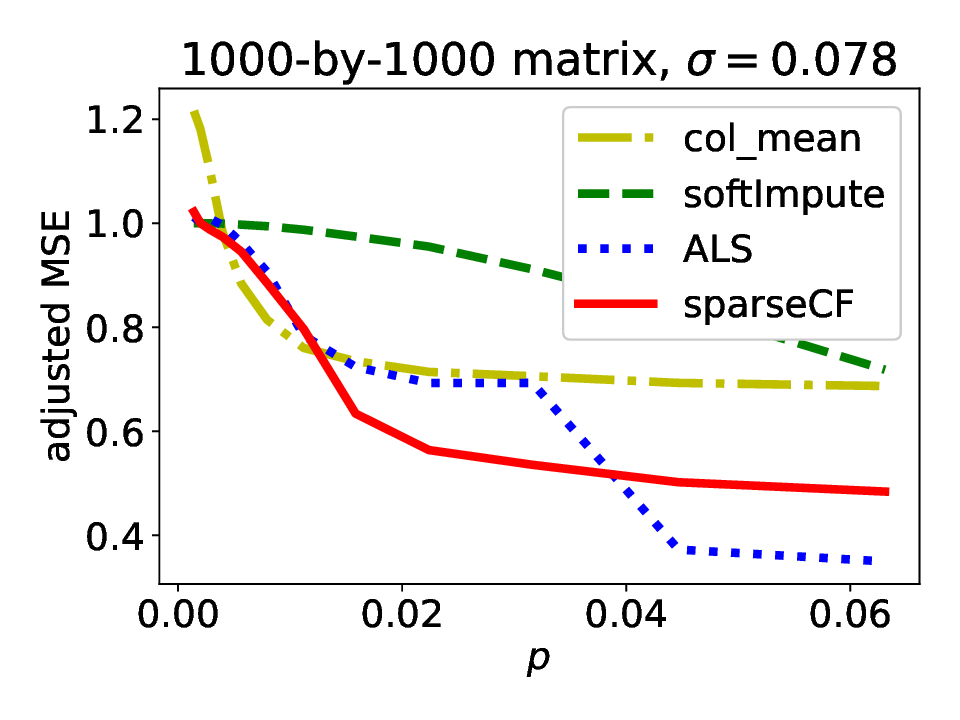}
	\end{subfigure}
~ 
\begin{subfigure}[t]{0.32\textwidth}
\centering
\includegraphics[height=1.7in]{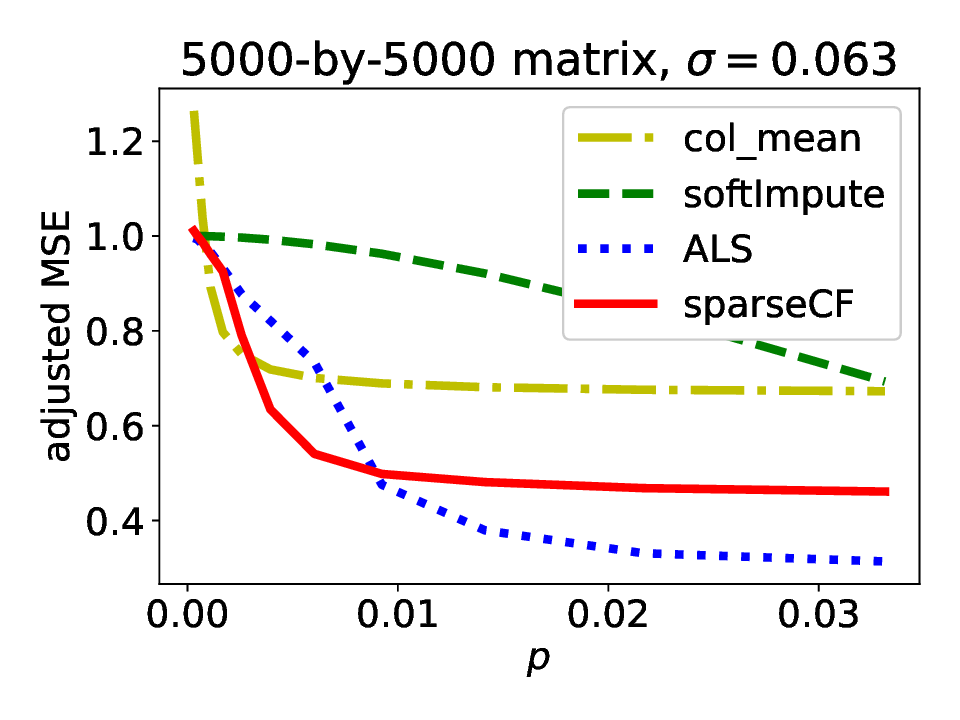}
\end{subfigure}
	\caption{Adjusted MSE of missing entries vs. sampling probability $p$. The rank of the ground truth matrix is 10, and the observations are perturbed with mean zero additive Gaussian noise with variance $\sigma^2$. Results are shown left to right for matrices of sizes 500-by-500, 1000-by-1000, and 5000-by-5000.} \label{fig:mse_p}
\end{figure*}

Figure \ref{fig:mse_kappa} shows the adjusted MSE of the algorithms with respect to the exponent of the density parameter $\kappa$ where $p = n^{-1 + \kappa}$. This rescales the $x$-axis so that the small values of $p$ are more visible. We plot only up to $\kappa = 0.6$ as we are focusing on the sparse regime with little overlaps in entries between pairs of rows and columns. Notice the dependence on the performance of our algorithm with respect to the radius parameter $t$ as illustrated best in Figure \ref{fig:5000_mse_kappa}. For too small values of $t$ the alg is suboptimal as it does not aggregate data sufficiently, but for too large values of $t$ the algorithm again is suboptimal as it simply estimates zeros due to the BFS trees running out of vertices. 

\begin{figure*}[t!]
	\centering
	\begin{subfigure}[t]{0.32\textwidth}
		\centering
		\includegraphics[height=1.7in]{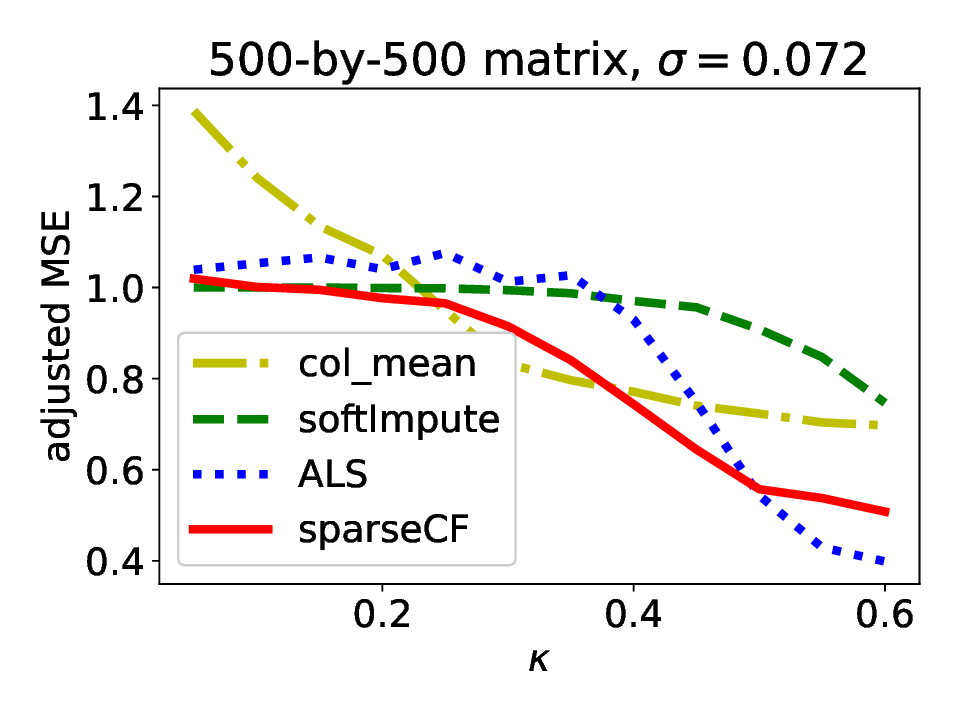}
	\end{subfigure}%
	~ 
	\begin{subfigure}[t]{0.32\textwidth}
		\centering
		\includegraphics[height=1.7in]{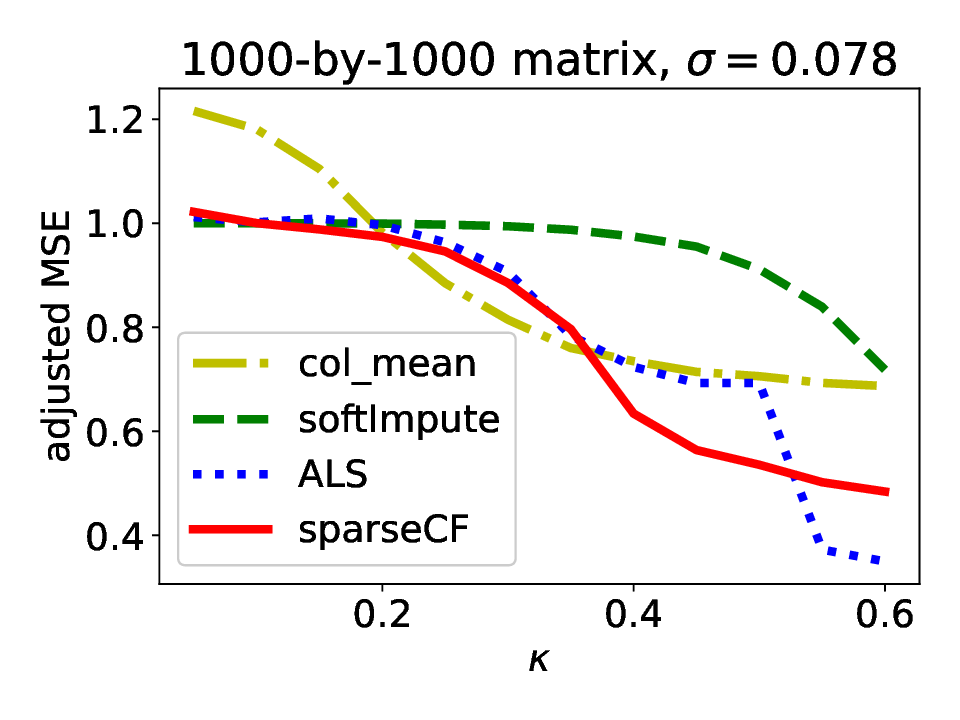}
	\end{subfigure}
	~ 
	\begin{subfigure}[t]{0.32\textwidth}
		\centering
		\includegraphics[height=1.7in]{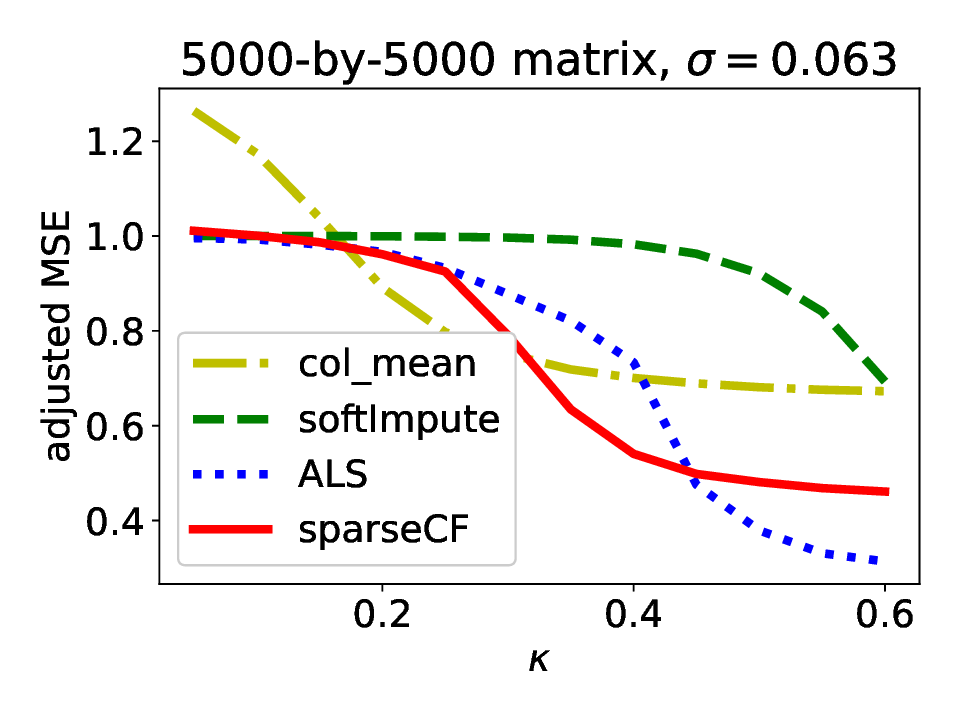}
	\end{subfigure}
	\caption{Adjusted MSE of missing entries vs. sampling probability exponent $\kappa = \ln(pn)/\ln(n)$. The rank of the ground truth matrix is 10, and observations are perturbed with mean zero additive Gaussian noise with variance $\sigma^2$. Results shown left to right for matrices of sizes 500-by-500, 1000-by-1000, and 5000-by-5000.} \label{fig:mse_kappa}
\end{figure*}

Figure \ref{fig:time} shows the time each of the algorithms took to run. We can see that our proposed algorithm is faster than SoftImpute, and this gap in speed is amplified with large $n$. Alternating Least Squares (ALS) is very fast, nearly as fast as the simple mean. Nuclear norm minimization was too slow to run on the size of instances in our example and thus was not included.

\begin{figure*}[t!]
	\centering
	\begin{subfigure}[t]{0.32\textwidth}
		\centering
		\includegraphics[height=1.7in]{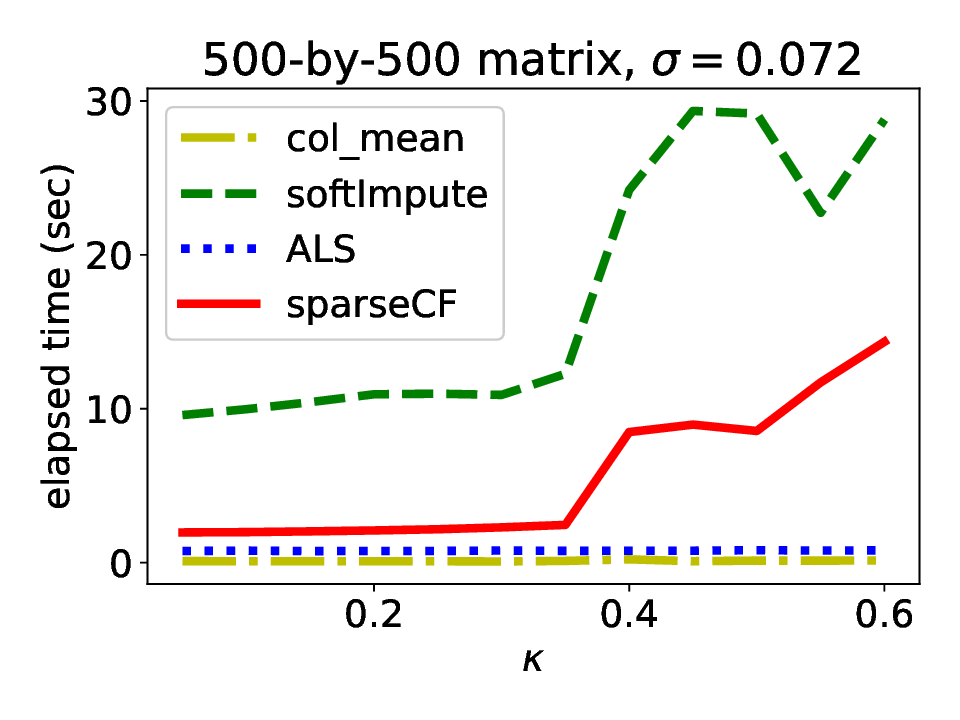}
	\end{subfigure}%
	~ 
	\begin{subfigure}[t]{0.32\textwidth}
		\centering
		\includegraphics[height=1.7in]{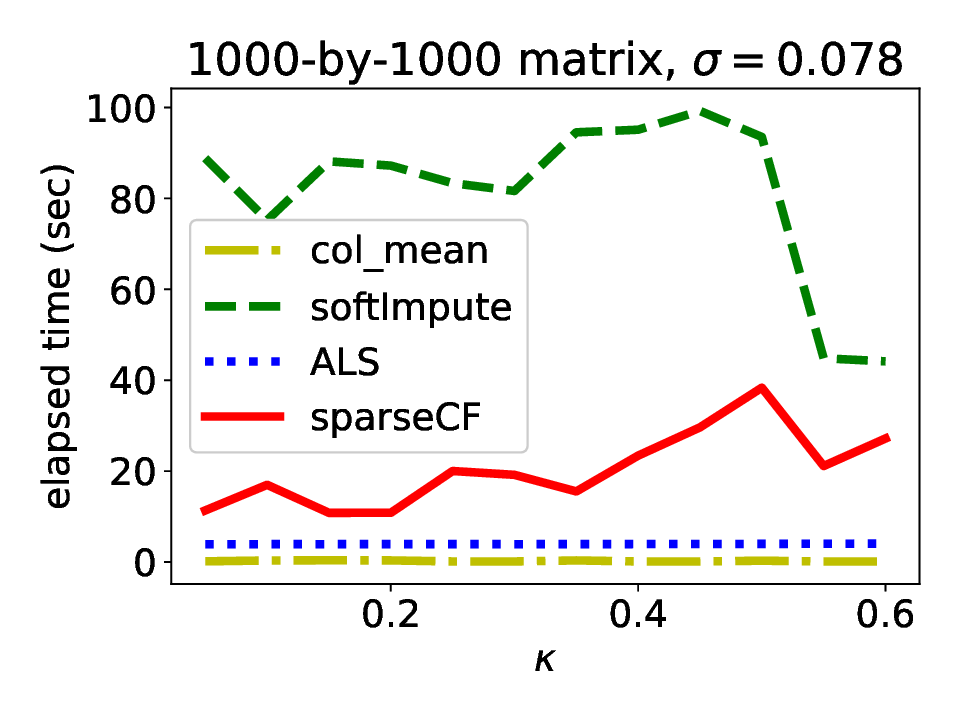}
	\end{subfigure}
	~ 
	\begin{subfigure}[t]{0.32\textwidth}
		\centering
		\includegraphics[height=1.7in]{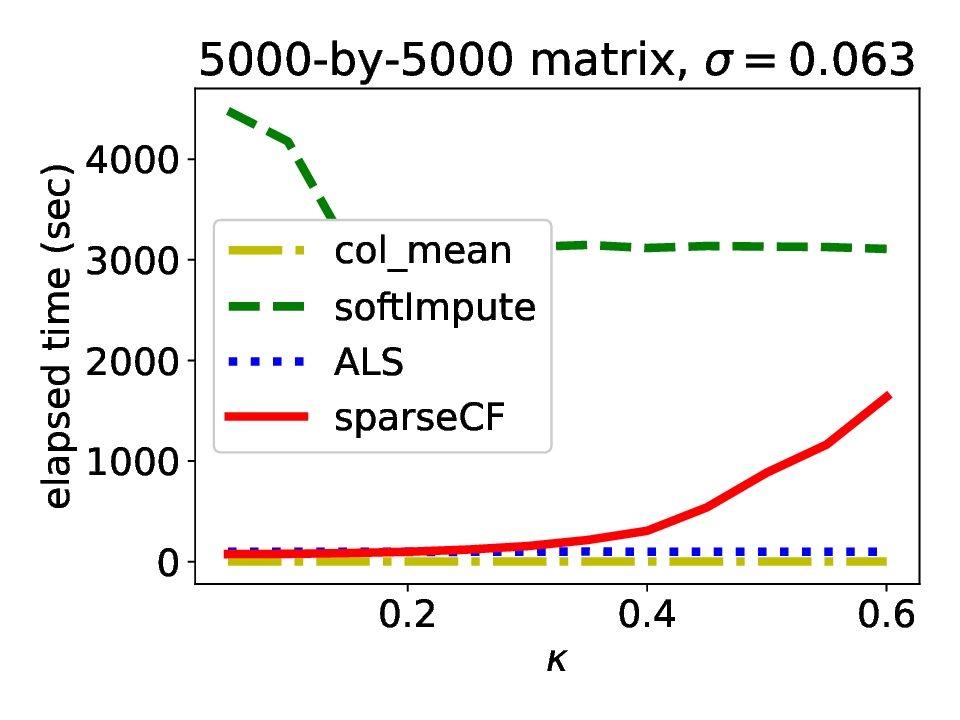}
	\end{subfigure}
	\caption{Computation time vs. sampling probability exponent $\kappa = \ln(pn)/\ln(n)$. The rank of the ground truth matrix is 10, and the observations are perturbed with mean zero additive Gaussian noise with variance $\sigma^2$. Results are shown left to right for matrices of sizes 500-by-500, 1000-by-1000, and 5000-by-5000.} \label{fig:time}
\end{figure*}

\section{Proofs for Theorems \ref{lemma:case1_assumption}, \ref{lemma:case2_assumption}, and \ref{lemma:case3_assumption}} \label{sec:thm_proofs}

In this section, we use the noisy nearest neighbor lemma \ref{lemma:nearest_neighbor} along with to establish Theorems \ref{lemma:case1_assumption}, \ref{lemma:case2_assumption}, and \ref{lemma:case3_assumption}. Proofs of the concentration of distance estimates is deffered to sections \ref{sec:distance_estimates_proof} and \ref{sec:distance_estimate_approx_proof}. 

\subsection{Analyzing Sparse Regime: Proofs of Theorem \ref{lemma:case1_assumption} and \ref{lemma:case3_assumption}} 
We prove that as long as $p = n^{-1+\kappa}$ for 
any $\kappa \in (0,\frac12)$, with high probability, properties \ref{ass:good_distances.1}-\ref{ass:good_distances.3} hold for an appropriately chosen function $d$, and for distance estimates $\hat{d}$ computed according to \eqref{eq:dist1} with $t = \lfloor\frac{\ln(1/p)}{\ln(np)}\rfloor$. We subsequently use Lemma \ref{lemma:nearest_neighbor} to conclude Theorem \ref{lemma:case1_assumption}. The most involved part in the proof is establishing
that property \ref{ass:good_distances.2} holds with high probability for an appropriately chosen $\Delta$, which is delegated to Lemma \ref{lemma:dist_1}.

\medskip
\noindent{\em Good distance $d$ and Property \ref{ass:good_distances.1}.} 
We start by defining the ideal distance $d$ as follows. For all $(u, v) \in [n]^2$, let
\begin{align}
d(\theta_u,\theta_v) & = \|\Lambda^{t+1} Q (e_u - e_v)\|_2^2 = \sum_{k=1}^r \lambda_k^{2(t+1)} (q_k(\theta_u) - q_k(\theta_v))^2. \label{eq:ideal.dist1} 
\end{align}
Recall that $t = \lfloor\frac{\ln(1/p)}{\ln(np)}\rfloor$. Assuming $p = n^{-1 + \kappa}$, 
$\kappa \in (0,\frac12)$
\begin{align}
t & = \Bigg\lfloor\frac{\ln(1/p)}{\ln(np)}\Bigg\rfloor ~=~\Bigg\lfloor \frac{1}{\kappa} - 1 \Bigg\rfloor.
\end{align}

We want to show that there exists $\bias: \RealsP \to \RealsP$ so that $|f(\theta_a, \theta_b) - f(\theta_u, \theta_v) |  \leq \bias(\eta)$for any $\eta > 0$ and $(u,a,v,b) \in [n]^4$ such that $d(\theta_u,\theta_a) \leq \eta$ and $d(\theta_v,\theta_b) \leq \eta$.
By the finite spectrum characterization of the function $f$, it follows that
\begin{align}
|f(\theta_u,\theta_v) - f(\theta_a, \theta_b)| &= |e_u^T Q^T \Lambda Q e_v - e_a^T Q^T \Lambda Q e_b|  \nonumber \\
&= |e_u^T Q^T \Lambda Q (e_v - e_b) - (e_a - e_u)^T Q^T \Lambda Q e_b| \nonumber \\
&\stackrel{(a)}{\leq} B \sqrt{r} \|\Lambda Q (e_v - e_b)\|_2 + B \sqrt{r} \|\Lambda Q (e_u - e_a)\|_2  \nonumber \\
&\leq B \sqrt{r} |\lambda_r|^{-t} \|\Lambda^{t+1} Q (e_v - e_b)\|_2
+ B \sqrt{r} |\lambda_r|^{-t} \|\Lambda^{t+1} Q (e_u - e_a)\|_2 \nonumber \\
& = B |\lambda_r|^{-t} \sqrt{r} \Big( \sqrt{d(\theta_v,\theta_b)} + \sqrt{d(\theta_u,\theta_a)} \Big) \nonumber \\
& \leq 2 B |\lambda_r|^{-t} \sqrt{r \eta}~\equiv \bias(\eta), \label{eq:ideal.bias1}
\end{align}
where (a) follows from assuming that $|q_k(\theta)| \leq B$ for all $k \in [r]$ and $\theta \in [0,1]$. In summary, 
property \ref{ass:good_distances.1} is satisfied for distance function $d$ defined according to \eqref{eq:ideal.dist1} and $\bias(\eta) = 2 B |\lambda_r|^{-t} \sqrt{r \eta}$. 

\medskip
\noindent{\em Good distance estimate $\hat{d}$ and Property \ref{ass:good_distances.2}.} We state the following Lemma when $f$ has rank $r$, whose proof is delegated to Section \ref{sec:distance_estimates_proof}. 
\begin{lemma}\label{lemma:dist_1}
Let $f$ has rank $r$, $p = n^{-1+\kappa}$ for $\kappa \in (0,\frac12)$ such that $1/\kappa$ is not an integer. Consider $\hat{d}$ as computed in \eqref{eq:dist1} with $t = \lfloor\frac{\ln(1/p)}{\ln(np)}\rfloor$. For any 
$\rho \in (0, \kappa)$
\begin{align*}
\max_{u,a \in [n]^2} |d(\theta_u,\theta_a) - \hat{d}(u,a)|
&= O(B r |\lambda_1|^{2/\kappa} n^{ - \frac12 (\kappa - \rho)}), 
\end{align*}
with probability at least 
$1 - O\Big(n^2 \exp\big(-\Theta(n^{\min(\rho, \kappa(t - \frac12))})\big)\Big)$.
\end{lemma}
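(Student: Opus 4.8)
The plan is to bound $\hat d(u,a)-d(\theta_u,\theta_a)$ uniformly in $(u,a)$ by routing through the $F$-bilinear form $(\tN_{u,t}-\tN_{a,t})^{T} F\,(\tN_{u,t+1}-\tN_{a,t+1})$ and, inside it, through the ``power-iteration vectors'' $\Lambda^{s}Qe_u$ for $s=t,t+1$; each resulting approximation is controlled by a Chernoff/Bernstein estimate, and one finishes by union bounding over the $n^{2}$ pairs. Throughout, $r$, $B$, the $\lambda_{k}$'s and $t=\lfloor 1/\kappa-1\rfloor$ are treated as constants. \emph{Step 1 (good neighborhoods).} First fix an event of probability $1-O\big(n\exp(-\Theta(n^{\min(\rho,\kappa(t-1/2))}))\big)$ on which, for every $u\in[n]$ and every depth $s\le t+1$ simultaneously: (i) $|\cS_{u,s}|=\Theta((np)^{s})$; and (ii) the breadth-first exploration of $\cG=([n],\cEp)$ from $u$ can be run by revealing edge \emph{indicators} --- which are $\mathrm{Bernoulli}(p)$ and independent of all $\theta$'s --- level by level, so that each newly discovered vertex carries a latent variable that is genuinely $U[0,1]$ and independent of the past, and each explored vertex has $np\,(1+o(1))$ tree-children, the $o(1)$ being a factor $O(n^{\kappa(s+1)-1})$ common to all such vertices and due to the (vanishing) fraction of level-$(s+1)$ vertices adjacent to two or more level-$s$ vertices. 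Item (i) is layer-by-layer Chernoff, and (ii) holds because Bernoulli sampling decouples the graph topology from the latent variables.

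\emph{Step 2 (power iteration).} On this event, prove uniformly over $u$ that $\big\|\,Q\tN_{u,s}-\Lambda^{s}Qe_u\,\big\|_{\infty}=O(n^{-\frac12(\kappa-\rho)})$ for $s=t$ and $s=t+1$. The engine is the one-step identity: conditioned on the exploration down to level $s$,
\[
\E[\,QN_{u,s+1}\mid\mathcal{F}_{s}\,]=\Lambda\sum_{i\in\cS_{u,s}}N_{u,s}(i)\,\#\{\text{tree-children of }i\}\,Qe_{i}\;\approx\;(np)\,\Lambda\,(QN_{u,s}),
\]
because a level-$(s+1)$ vertex $j$ with tree-parent $i$ contributes the term $(Qe_{j})\,M'(i,j)$, whose expectation over the fresh $\theta_{j}\sim U[0,1]$ has $k$-th coordinate $\int_{0}^{1}q_{k}(y)f(\theta_{i},y)\,dy=\lambda_{k}q_{k}(\theta_{i})$ by orthonormality of the eigenfunctions --- the finite-sample reading of the path-expectation identity displayed in Section~\ref{sec:alg}. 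Since $|\cS_{u,s+1}|$ is likewise $(np)\,(1+o(1))\,|\cS_{u,s}|$ with the $o(1)$ matching that in the children count, the common factor cancels under $\tN=N/|\cS|$, so $Q\tN_{u,s+1}\approx\Lambda\,(Q\tN_{u,s})$ in expectation; iterating from $\tN_{u,0}=e_u$ gives $Q\tN_{u,t}\approx\Lambda^{t}Qe_u$. For concentration, the fluctuation injected at level $s$ has relative size $O((np)^{-s/2})$, dominated by the level-$1$ term $O((np)^{-1/2})$, which a Hoeffding bound over the $\Theta(np)$ bounded summands controls down to $n^{-\frac12(\kappa-\rho)}$ with failure probability $\exp(-\Theta(n^{\rho}))$, and a Freedman-type bound stitches the $t$ rounds together. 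Feeding these estimates into $F=Q^{T}\Lambda Q$ gives, on the good event,
\[
(\tN_{u,t}-\tN_{a,t})^{T} F\,(\tN_{u,t+1}-\tN_{a,t+1})=\big(Q(e_u-e_a)\big)^{T}\Lambda^{2(t+1)}\big(Q(e_u-e_a)\big)+O\big(n^{-\frac12(\kappa-\rho)}\big),
\]
whose leading term is exactly $\|\Lambda^{t+1}Q(e_u-e_a)\|_{2}^{2}=d(\theta_u,\theta_a)$ --- using consecutive depths $t,t+1$ with $F$ between them is precisely what produces the square power $\Lambda^{2(t+1)}$ rather than an odd one.

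\emph{Step 3 (from $F$ to the observed matrix).} Finally compare $\hat d(u,a)$, which uses $\Mpp+\Mpind$, with the $F$-bilinear form. Conditioning on $\cEp$, the breadth-first trees, and the observed weights on the tree edges actually used --- these determine all four vectors $\tN_{u,t},\tN_{a,t},\tN_{u,t+1},\tN_{a,t+1}$ --- the matrix $\Mpp+\Mpind$ has conditionally independent, $[0,1]$-bounded entries with mean $\pp\,\hat Z$, where $\hat Z$ equals $F$ except on the used tree edges, where it equals the realized weight. A Bernstein bound over these independent entries, using $\|\tN_{\cdot,s}\|_{2}^{2},\|\tN_{\cdot,s}\|_{\infty}=O(1/|\cS_{\cdot,s}|)$, gives $\big|\hat d(u,a)-(\tN_{u,t}-\tN_{a,t})^{T}\hat Z\,(\tN_{u,t+1}-\tN_{a,t+1})\big|=O(n^{-1/2})$; and since $\hat Z-F$ is supported on the $O(|\cS_{u,t+1}|+|\cS_{a,t+1}|)$ used tree edges joining levels $t$ and $t+1$, the spread-out-ness of the $\tN$'s forces $\big|(\tN_{u,t}-\tN_{a,t})^{T}(\hat Z-F)\,(\tN_{u,t+1}-\tN_{a,t+1})\big|=O(1/|\cS_{u,t}|)=O(n^{-\kappa t})$. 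Both are $o(n^{-\frac12(\kappa-\rho)})$ since $t\ge1$. (This is exactly where $\cEpind$ earns its keep: using $\Mpp$ alone would give conditional mean $\pp\,(F\odot\mathbf 1_{\notin\cEp})$, systematically deleting \emph{all} $\Theta(n)$ edges of $\cEp$ lying between the two neighborhoods --- not merely the $O(|\cS|)$ tree edges --- hence a first-order, not negligible, bias.) Combining Steps 1--3 and union bounding over $(u,a)$ gives $\max_{u,a}|\hat d(u,a)-d(\theta_u,\theta_a)|=O(n^{-\frac12(\kappa-\rho)})$ with the stated probability; in the exponent, $n^{\rho}$ is the cost of resolving one vertex's $\Theta(np)$ weighted edges to accuracy $n^{-\frac12(\kappa-\rho)}$ in Step~2, and $n^{\kappa(t-1/2)}$ that of the deeper neighborhood-structure concentrations in Step~1.

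The step I expect to be the main obstacle is Step~2: pushing the one-step recursion through all $t$ exploration rounds \emph{uniformly over all $n$ vertices}, with a deviation that is genuinely $O(n^{-\frac12(\kappa-\rho)})$ at a sub-exponential failure probability strong enough to survive the $n^{2}$ union bound, and --- since for $\kappa t>\tfrac12$ the depth-$t$ neighborhoods are not globally tree-like --- handling the children-count corrections and the (fresh, but adjacency-conditioned) latent variables carefully enough that their net effect \emph{after normalization} is $o(n^{-\frac12(\kappa-\rho)})$ rather than merely $o(1)$.
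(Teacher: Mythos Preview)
Your three-step decomposition --- regular BFS growth, a martingale argument yielding $Q\tN_{u,s}\approx\Lambda^{s}Qe_u$, then a Bernstein bound passing from $F$ to the observed bilinear form --- is precisely the paper's route via Lemmas~\ref{lemma:nbrhd_growth}, \ref{lemma:nhbrhd_vectors}--\ref{lemma:NFN_bound}, and~\ref{lemma:NMN_conc}, assembled with the same parameter choice $x=z=n^{\rho/2}$. The one noteworthy variant is your handling of the tree-edge dependence in Step~3: you absorb it into a deterministic $(\tN)^{T}(\hat Z-F)(\tN)=O(n^{-\kappa t})$ correction, whereas the paper introduces a resampled matrix with fresh observations on the tree edges and bounds the discrepancy probabilistically via a Chernoff estimate --- which, incidentally, is where the $n^{\kappa(t-1/2)}$ exponent in the failure probability actually originates, not from the neighborhood-growth step as you suggest.
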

Lemma \ref{lemma:dist_1} implies that property \ref{ass:good_distances.2} holds with probability $1 - o(1)$ for some $\Delta = \Theta(B r |\lambda_1|^{2/\kappa} n^{-(\kappa - \rho)/2})$ and any $\rho \in (0,\kappa)$. The distance error bound $\Delta$ is minimized by choosing $\rho$ arbitrarily close to 0 so that $\Delta$ can be arbitrarily close to $\Theta(B r |\lambda_1|^{2/\kappa} n^{-\kappa/2}) = \Theta(B r |\lambda_1|^{2/\kappa} (pn)^{-1/2})$.

The corresponding statement for $f$ that has $\varepsilon$-approximate rank $r$ is stated below.
\begin{lemma}\label{lemma:dist_1_approx}
Let $f$ have $\varepsilon$-approximate rank $r$, $p = n^{-1+\kappa}$ for $\kappa \in (0,\frac12)$ such that $1/\kappa$ is not an integer. Consider $\hat{d}$ as computed in \eqref{eq:dist1} with $t = \lfloor\frac{\ln(1/p)}{\ln(np)}\rfloor$. For any 
$\rho \in (0, \kappa)$
\begin{align*}
\max_{u,a \in [n]^2} |d(\theta_u,\theta_a) - \hat{d}(u,a)|
&= O(B r |\lambda_1|^{2/\kappa} n^{ - \frac12 (\kappa - \rho)}) + O\Big(t \eps (1+\eps)^t + t^2 \eps^2 (1+\eps)^{2t-1}\Big), 
\end{align*}
with probability at least 
$1 - O\Big(n^2 \exp\big(-\Theta(n^{\min(\rho, \kappa(t - \frac12))})\big)\Big)$.
\end{lemma}

\medskip
\noindent{\em Sufficient representation and Property \ref{ass:good_distances.3}.} Since $f$ is $L$-Lipschitz, the distance $d$ as defined in \eqref{eq:ideal.dist1} is bounded above by the squared $\ell_2$ distance: 
\begin{align}
d(\theta_u,\theta_v) & = \|\Lambda^{t+1} Q (e_u - e_v)\|_2^2 \nonumber \\
& \leq |\lambda_1|^{2t} \|\Lambda Q (e_u - e_v)\|_2^2 \nonumber \\
& = |\lambda_1|^{2t} \int_0^1 (f(\theta_u, y) - f(\theta_v, y))^2 dy \nonumber \\
&\leq |\lambda_1|^{2t} L^2 |\theta_u - \theta_v|^2. \label{eq:ideal.dist1.a}
\end{align}
We assumed that the latent parameters $\{\theta_u\}_{u \in [n]}$ are sampled i.i.d. uniformly over $[0,1]$. Therefore, for any $\theta_u \in [0,1]$, for any $v \in [n]$ and $\eta' > 0$,  
\begin{align*}
\Prob{d(\theta_u,\theta_v) \leq \eta' ~\big|~ \theta_u} &\geq \Prob{|\lambda_1|^{2t} L^2 |\theta_u - \theta_v|^2 \leq \eta' ~\big|~ \theta_u} \\
& = \Prob{|\theta_u - \theta_v| \leq \frac{\sqrt{\eta'}}{|\lambda_1|^{t} L} ~\big|~ \theta_u} \\
& \geq \min\Big(1, \frac{\sqrt{\eta'}}{|\lambda_1|^{t} L}\Big). 
\end{align*}
Let us define 
\begin{align}
\meas(\eta') =  \frac{(1-\delta) \sqrt{\eta'}}{|\lambda_1|^{t} L}\label{eq:meas_def}
\end{align}
for all $\eta' \in (0, |\lambda_1|^{2t} L^2)$. By an application of Chernoff's bound and a simple majorization argument, it follows that for all $\eta' \in (0,|\lambda_1|^{2t} L^2)$ and $\delta \in (0,1)$,   
\begin{align*}
\Prob{\frac{1}{n-1} \sum_{a \in [n] \setminus u} \Ind\left(d(u,a) \leq \eta' \right) \leq \meas(\eta') ~\big|~ \theta_u } &\leq \exp\left(- \frac{\delta^2 (n-1) \sqrt{\eta'}}{2 |\lambda_1|^{t} L}\right).
\end{align*}
By using union bound over all $n$ indices, it follows that for any $\eta' \in (0, |\lambda_1|^{2t} L^2)$, with probability at least $1 -  n \exp\left(- \frac{\delta^2 (n-1) \sqrt{\eta'}}{2 |\lambda_1|^{t} L}\right)$, property \ref{ass:good_distances.3} is
satisfied with $\meas$ as defined in \eqref{eq:meas_def}.

\medskip
\noindent{\em Concluding Proof of Theorem \ref{lemma:case1_assumption}.} In summary, with probability at least $1-\alpha$ for 
\[\alpha = O(n^2 \exp\left(- \Theta(n^{\min(\rho, \kappa(t - \frac12))}) \right)) + n \exp\left(- \frac{\delta^2 (n-1) \sqrt{\eta - \Delta}}{2 |\lambda_1|^{t} L}\right),\]
properties \ref{ass:good_distances.1}-\ref{ass:good_distances.3} are satisfied for the estimate $\hat{d}$ computed from \eqref{eq:dist1} with $t = \lfloor\frac{\ln(1/p)}{\ln(np)}\rfloor$, and the choices of
\begin{align}
d(\theta_u, \theta_v) &= \|\Lambda^{t+1} Q (e_u - e_v)\|_2^2, \nonumber \\
\bias(\eta) & = 2 B |\lambda_r|^{-t} \sqrt{r \eta}, \nonumber \\
\Delta & = \Theta(B r |\lambda_1|^{2/\kappa} n^{ - \frac12 (\kappa - \rho)}), \nonumber \\
\meas(\eta') & = \frac{(1-\delta) \sqrt{\eta'}}{|\lambda_1|^{t} L}, \label{eq:summary.1}
\end{align}
for any $\eta > 0$, $\rho \in (0, \kappa)$, $\delta \in (0,1)$ and $\eta' = \eta - \Delta \in (0,|\lambda_1|^{2t} L^2)$.
By substituting the expressions for $\bias$, $\meas$, and $\alpha$ into Lemma \ref{lemma:nearest_neighbor}, it follows that 
\begin{align}
\MSE(\hat{F}) &\leq 4B^2 |\lambda_r|^{-2t} r (\eta + \Delta) + \frac{2|\lambda_1|^{2t} L^2}{(1-\delta)^3 p n^2 (\eta-\Delta)} + \exp\Bigg( -\frac{\delta^2 p n^2 (1-\delta)^2 (\eta-\Delta)}{4L^2 |\lambda_1|^{2t}}\Bigg) \nonumber \\
& \quad  + O(n^2 \exp\left(- \Theta(n^{\min(\rho, \kappa(t - \frac12))}) \right)) + n \exp\left(- \frac{\delta^2 (n-1) \sqrt{\eta - \Delta}}{2 |\lambda_1|^{t} L}\right). \label{eq:mse.1.1}
\end{align}
Additionally, for any $\delta' \in (0,1)$, 
\begin{align}
\max_{(u,v)\in[n]^2} |\hat{F}(u, v) - f(\theta_u,\theta_v)| \leq 2B |\lambda_r|^{-t} \sqrt{r (\eta + \Delta)} + \delta'  \label{eq:high_prob_bd1}
\end{align}
with probability at least
\begin{align*}
& 1 - n^2 \exp\left(-\tfrac{\delta^2 (1-\delta)^2 p n^2  (\eta-\Delta)}{4 |\lambda_1|^{2t}L^{2}} \right)- n^2 \exp\left(-\tfrac{\delta'^2 (1 - \delta)^3 p n^2 (\eta-\Delta)}{|\lambda_1|^{2t}L^{2}}\right) \\
& \qquad - O(n^2 \exp\left(- \Theta(n^{\min(\rho, \kappa(t - \frac12))}) \right))- n \exp\left(- \frac{\delta^2 (n-1) \sqrt{\eta'}}{2 |\lambda_1|^{t} L}\right).
\end{align*}
By selecting $\eta = \Theta\big(B r |\lambda_1|^{2/\kappa} n^{ - \frac12 (\kappa - \rho)}\big)$ with a large enough constant, it follows that
\begin{align*}
\eta \pm \Delta &= \Theta(\eta) = \Theta(\Delta), \\
pn^2 \eta &= \Theta(B r |\lambda_1|^{2/\kappa}n^{1 + \kappa - \frac12 (\kappa - \rho)}) = \Omega(B r |\lambda_1|^{2/\kappa}n^{1 + \kappa/2}), \\
n \sqrt{\eta} &= \omega(\sqrt{B r} |\lambda_1|^{1/\kappa} {n^{\frac{7}{8}}}).
\end{align*}
By substituting this choice of $\eta$ and $\delta = \frac12$ into \eqref{eq:mse.1.1}, it follows that 
\begin{align}
\MSE(\hat{F}) & = O\Big(r^2 B^3 |\lambda_r|^2 (|\lambda_1|/|\lambda_r|)^{2/\kappa} n^{ - \frac12 (\kappa - \rho)}\Big).
\end{align}
By choosing $\delta' = 2B |\lambda_r|^{-t} \sqrt{r (\eta + \Delta)}$, it follows that $\delta'^2 pn^2 \eta = \Omega(n)$. Therefore, by substituting into \eqref{eq:high_prob_bd1}, it follows that with probability $1 - o(1)$, 
\begin{align}
\max_{(u,v)\in[n]^2} |\hat{F}(u, v) - f(\theta_u,\theta_v)| & = O\Big(r B^{3/2} |\lambda_r| (|\lambda_1|/|\lambda_r|)^{1/\kappa} n^{ - \frac14 (\kappa - \rho)}\Big).
\end{align}
This completes the proof of Theorem \ref{lemma:case1_assumption}. \hfill \Halmos

\medskip
\noindent{\em Concluding Proof of Theorem \ref{lemma:case3_assumption}.} Like Proof of 
Theorem \ref{lemma:case1_assumption},  with probability at least $1-\alpha$ for 
\[\alpha = O(n^2 \exp\left(- \Theta(n^{\min(\rho, \kappa(t - \frac12))}) \right)) + n \exp\left(- \frac{\delta^2 (n-1) \sqrt{\eta - \Delta}}{2 |\lambda_1|^{t} L}\right),\]
properties \ref{ass:good_distances.1}-\ref{ass:good_distances.3} are satisfied for the estimate $\hat{d}$ computed from \eqref{eq:dist1} with $t = \lfloor\frac{\ln(1/p)}{\ln(np)}\rfloor$, and the choices of
\begin{align}
d(\theta_u, \theta_v) &= \|\Lambda^{t+1} Q (e_u - e_v)\|_2^2, \nonumber \\
\bias(\eta) & = 2 B |\lambda_r|^{-t} \sqrt{r \eta}, \nonumber \\
\Delta & = \Theta(B r |\lambda_1|^{2/\kappa} n^{ - \frac12 (\kappa - \rho)}) + \Theta\Big(t \eps (1+\eps)^t + t^2 \eps^2 (1+\eps)^{2t-1}\Big), \nonumber \\
\meas(\eta') & = \frac{(1-\delta) \sqrt{\eta'}}{|\lambda_1|^{t} L}, \label{eq:summary.1.eps}
\end{align}
for any $\eta > 0$, $\rho \in (0, \kappa)$, $\delta \in (0,1)$ and $\eta' = \eta - \Delta \in (0,|\lambda_1|^{2t} L^2)$. 
Note that the only difference is in choice of $\Delta$ due to Lemma \ref{lemma:dist_1_approx} for $f$ that has $\varepsilon$-approximate
rank $r$.
By substituting the expressions for $\bias$, $\meas$, and $\alpha$ into Lemma \ref{lemma:nearest_neighbor}, it follows that 
\begin{align}
\MSE(\hat{F}) &\leq 4B^2 |\lambda_r|^{-2t} r (\eta + \Delta) + \frac{2|\lambda_1|^{2t} L^2}{(1-\delta)^3 p n^2 (\eta-\Delta)} + \exp\Bigg( -\frac{\delta^2 p n^2 (1-\delta)^2 (\eta-\Delta)}{4L^2 |\lambda_1|^{2t}}\Bigg) \nonumber \\
& \quad  + O(n^2 \exp\left(- \Theta(n^{\min(\rho, \kappa(t - \frac12))}) \right)) + n \exp\left(- \frac{\delta^2 (n-1) \sqrt{\eta - \Delta}}{2 |\lambda_1|^{t} L}\right). \label{eq:mse.3.1}
\end{align}
Additionally, for any $\delta' \in (0,1)$, 
\begin{align}
\max_{(u,v)\in[n]^2} |\hat{F}(u, v) - f(\theta_u,\theta_v)| \leq 2B |\lambda_r|^{-t} \sqrt{r (\eta + \Delta)} + \delta'  \label{eq:high_prob_bd3}
\end{align}
with probability at least
\begin{align*}
& 1 - n^2 \exp\left(-\tfrac{\delta^2 (1-\delta)^2 p n^2  (\eta-\Delta)}{4 |\lambda_1|^{2t}L^{2}} \right)- n^2 \exp\left(-\tfrac{\delta'^2 (1 - \delta)^3 p n^2 (\eta-\Delta)}{|\lambda_1|^{2t}L^{2}}\right) \\
& \qquad - O(n^2 \exp\left(- \Theta(n^{\min(\rho, \kappa(t - \frac12))}) \right))- n \exp\left(- \frac{\delta^2 (n-1) \sqrt{\eta'}}{2 |\lambda_1|^{t} L}\right).
\end{align*}
By selecting $\eta = \Theta\big(B r |\lambda_1|^{2/\kappa} n^{ - \frac12 (\kappa - \rho)}\big) + \Theta(t \eps (1+\eps)^t + t^2 \eps^2 (1+\eps)^{2t-1}\Big)$ with appropriately large enough constants, it follows that
\begin{align*}
\eta \pm \Delta &= \Theta(\eta) = \Theta(\Delta), \\
pn^2 \eta & 
= \Omega(n^{1 + \kappa/2}), \\
n \sqrt{\eta} &= \omega({n^{\frac{7}{8}}}).
\end{align*}
By substituting this choice of $\eta$ and $\delta = \frac12$ into \eqref{eq:mse.1.1}, and using $t < 1/\kappa - 1$, it follows that 
\begin{align}
\MSE(\hat{F}) 
& = O\Big(r^2 (|\lambda_1|/|\lambda_r|)^{2/\kappa} n^{ - \frac12 (\kappa - \rho)}\Big) + O\Big(|\lambda_r|^{-2/\kappa} r \left(\frac{\eps}{\kappa} (1+\eps)^{1/\kappa - 1} + \frac{\eps^2}{\kappa^2} (1+\eps)^{2/\kappa-3}\right)\Big).
\end{align}
By choosing $\delta' = \Theta(B |\lambda_r|^{-t} \sqrt{r (\eta + \Delta)})$, it follows that $\delta'^2 pn^2 \eta = \Omega(n)$. Therefore, by substituting into \eqref{eq:high_prob_bd1}, it follows that with probability $1 - o(1)$, 
\begin{align}
&\max_{(u,v)\in[n]^2} |\hat{F}(u, v) - f(\theta_u,\theta_v)| \nonumber \\
&\qquad= O\Big(r (|\lambda_1|/|\lambda_r|)^{1/\kappa} n^{ - \frac14 (\kappa - \rho)}\Big)  
+ O\Big(|\lambda_r|^{-\frac{1}{\kappa}} \sqrt{r} (\sqrt{\frac{\eps}{\kappa}} (1+\eps)^{\frac{1}{2\kappa} - \frac12} + \frac{\eps}{\kappa} (1+\eps)^{\frac{1}{\kappa}-\frac32})\Big).
\end{align}
This completes the proof of Theorem \ref{lemma:case3_assumption}. \hfill \Halmos

\subsection{Analyzing Sparser Regime: Proof of Theorem \ref{lemma:case2_assumption}} Similar to the proof of Theorem \ref{lemma:case1_assumption}, we prove that as long as $p = \frac{\log n^{1+\kappa}}{n}$ for any $\kappa > 0$, with high probability, properties \ref{ass:good_distances.1}-\ref{ass:good_distances.3} are satisfied for an appropriately chosen function $d$ and for distance estimates $\hat{d}$ computed according to \eqref{eq:dist2} with $t = \lceil \frac{\ln(0.08/p)}{\ln(0.275pn)} - r' \rceil$.
We subsequently use Lemma \ref{lemma:nearest_neighbor} to conclude Theorem \ref{lemma:case2_assumption}. The most involved part in the proof is establishing that property \ref{ass:good_distances.2} holds with high probability for an appropriately chosen $\Delta$, which is delegated to Lemma \ref{lemma:dist_2}.

\medskip
\noindent{\em Good distance $d$ and Property \ref{ass:good_distances.1}.} We start by defining the ideal distance $d$ as follows. For all $(u, v) \in [n]^2$, 
\begin{align}  \label{eq:ideal.dist2} 
d(\theta_u,\theta_v) &= \|\Lambda Q (e_u - e_v)\|_2^2
~= \int_0^1 (f(\theta_u, y) - f(\theta_v, y))^2 dy. 
\end{align}
For any $u, v, a, b \in [n]$ with corresponding $\theta_u, \theta_v, \theta_a, \theta_b \in [0,1]$, 
\begin{align*}
|f(\theta_u,\theta_v) - f(\theta_a, \theta_b)|  &= |e_u^T Q^T \Lambda Q e_v - e_a^T Q^T \Lambda Q e_b| \\
&= |e_u^T Q^T \Lambda Q (e_v - e_b) - (e_a - e_u)^T Q^T \Lambda Q e_b| \\
&\stackrel{(a)}{\leq} B \sqrt{r} \|\Lambda Q (e_v - e_b)\|_2 + B \sqrt{r} \|\Lambda Q (e_u - e_a)\|_2, \\
& = B \sqrt{r} (\sqrt{d(\theta_v, \theta_b)} + \sqrt{d(\theta_u, \theta_a)}), 
\end{align*}
where (a) follows from assuming that $|q_k(\theta)| \leq B$ for all $k \in [r]$ and $\theta \in [0,1]$. It follows
that for any $\eta > 0$, if $d(\theta_u, \theta_a) \leq \eta$ and $d(\theta_v, \theta_b) \leq \eta$, then $|f(\theta_u, \theta_v) - f(\theta_a, \theta_b)| \leq 2B \sqrt{r \eta}$.
In summary, property \ref{ass:good_distances.1} is satisfied for distance $d$ defined in \eqref{eq:ideal.dist2} with 
$\bias: \RealsP \to \RealsP$ defined as $\bias(\eta) = 2B \sqrt{r\eta}$. 

\medskip
\noindent{\em Good distance estimation $\hat{d}$ and Property \ref{ass:good_distances.2}.} We state the following Lemma whose proof is
delegated to Section \ref{sec:distance_estimates_proof}.
\begin{lemma}\label{lemma:dist_2}
Assume that $p =  n^{-1} \ln^{1 +\kappa} n$ for some $\kappa > 0$. 
Consider $\hat{d}$ as computed in \eqref{eq:dist2} with 
$$ t = \Bigg\lceil \frac{\ln (0.08/p) }{\ln (0.275 np)} - r'  \Bigg \rceil.$$
For any $\rho \in (0, \kappa)$, 
\begin{align*}
\max_{u,a \in [n]^2} |d(\theta_u,\theta_a) - \hat{d}(u,a)| &\leq c (\ln n)^{ - \frac12 (\kappa - \rho)}\,
\end{align*}
with probability at least 
\begin{align*}
1 - O\Big(n^2 \exp(-\Theta((\ln n)^{1+\rho})) \Big),
\end{align*}
where $c = c(\lambda_1, \lambda_r, \lambda_{gap}, r, B)$ is independent of $n$ and $\lambda_{gap} = \min_{1 \leq s < s' \leq r} |\lambda_s - \lambda_s'|$.
\end{lemma}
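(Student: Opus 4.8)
\medskip
\noindent\emph{Proof plan.}
I would bound $\lvert\hat d(u,a)-d(\theta_u,\theta_a)\rvert$ by a chain of approximations, writing $v^{(s)}_u:=Q\tN_{u,s}\in\Reals^{r}$. The sample splitting makes $\cEpp\cup\cEpind$ a fresh $\mathrm{Bernoulli}(\pp)$ pattern given $\cEp$, with off-$\cEp$ conditional means $\pp f(\theta_a,\theta_b)$, so one expects
\[
\tfrac1\pp(\tN_{u,t}-\tN_{a,t})^T(\Mpp+\Mpind)(\tN_{u,t+\ell}-\tN_{a,t+\ell})\;\approx\;(v^{(t)}_u-v^{(t)}_a)^T\Lambda(v^{(t+\ell)}_u-v^{(t+\ell)}_a)
\]
via $F=Q^T\Lambda Q$; and one expects $v^{(s)}_u\approx\big(\lambda_k^{\,s}q_k(\theta_u)\big)_{k\in[r]}$. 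Granting both, the leading term of $\hat d(u,a)$ equals $\sum_{\ell\in[r']}z_\ell\sum_k\lambda_k^{\,2t+\ell+1}(q_k(\theta_u)-q_k(\theta_a))^2$, and the identity $\Lambda^{2t+2}\tLambda z=\Lambda^2\bOne$ --- row-wise $\sum_{\ell\in[r']}z_\ell\lambda_k^{\,\ell}=\lambda_k^{\,1-2t}$ for all $k$ --- collapses this \emph{exactly} to $\sum_k\lambda_k^{2}(q_k(\theta_u)-q_k(\theta_a))^2=d(\theta_u,\theta_a)$. So the work is entirely in the two ``$\approx$''s and in tracking how their errors survive the $z$-weighted sum.

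For the first ``$\approx$'': conditionally on $\cEp$ (hence on the BFS trees $\cT_u$ and on $\tN_{u,\cdot}$), the bilinear form is a sum of independent bounded terms with $\lVert\tN_{u,s}\rVert_\infty,\lVert\tN_{u,s}\rVert_2^2=O(1/\lvert\cS_{u,s}\rvert)$, so Bernstein gives deviation $\widetilde O\big((\pp\lvert\cS_{u,t}\rvert\lvert\cS_{u,t+\ell}\rvert)^{-1/2}\big)$ from $(\tN_{u,t}-\tN_{a,t})^TF(\tN_{u,t+\ell}-\tN_{a,t+\ell})$, plus a correction for the $O(pn^2)$ entries in $\cEp$ (conditional mean $\pp Z$, but suppressed by $\lVert\tN\rVert_\infty^2$); the choice $t=\lceil\ln(0.08/p)/\ln(0.275np)-r'\rceil$, which keeps $(np)^{t+r'}\le 0.08\,n$ while $\lvert\cS_{u,s}\rvert=\Theta((np)^{s})$ is polynomially large, makes both $n^{-\Omega(1)}$. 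The second ``$\approx$'' is the heart. Its \emph{mean} is exact: conditioning on $\cEp,\cT_u,\theta_u$ and averaging over the internal/leaf latent variables and the edge noise, orthonormality of $\{q_k\}$ telescopes $\E[N_{u,s}(i)q_k(\theta_i)\mid\theta_u]=\lambda_k^{\,s}q_k(\theta_u)$ term by term, and the $1/\lvert\cS_{u,s}\rvert$ normalization cancels. For the fluctuation I would reveal the BFS exploration one level at a time and use a variance-sensitive martingale bound (Freedman), crucially using $\E[N_{u,s}(i)^2]\le\E[N_{u,s}(i)]=O(\max_j\lvert\lambda_j\rvert^{\,s})$ (since $N\in[0,1]$) rather than the worst-case $N\le1$. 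This writes the error in $(v^{(s)}_u)_k$ as $\sum_{d=1}^{s}\lambda_k^{\,s-d}\delta_{u,k,d}$, where $\delta_{u,k,d}$ is a mean-zero increment (given levels $<d$) depending only on levels $\le d$ --- hence \emph{independent of $s$} --- with $\lvert\delta_{u,k,d}\rvert$ decaying geometrically in $d$; concretely, because $\deg(u)\approx np=\ln^{1+\kappa}n$ is only polylogarithmic, a Hoeffding bound at confidence $1-\exp(-\Theta((\ln n)^{1+\rho}))$ gives $\lvert\delta_{u,k,1}\rvert\lesssim(\ln n)^{-\frac12(\kappa-\rho)}$, and this is what fixes both the target rate and the stated failure probability.

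The crucial point for surviving the $z$-weighting is that this martingale structure makes the error \emph{co-scale with the signal through $\Lambda^{\,\mathrm{depth}}$}: splitting at $d=t$, the error in $(v^{(t+\ell)}_u)_k$ is $\lambda_k^{\,t+\ell}\Xi_{u,k}+(\text{deep})$ with $\Xi_{u,k}:=\sum_{d\le t}\lambda_k^{\,-d}\delta_{u,k,d}$ an $\ell$-independent quantity of size $O((\ln n)^{-\frac12(\kappa-\rho)})$, and the ``deep'' part (levels $>t$) of absolute size $n^{-1/2+o(1)}$. Hence in the expanded bilinear form every error term factors as a power $\lambda_k^{\,2t+1+\ell}$ (or similar) times $\Xi_{u,k}$, and $\sum_{\ell\in[r']}z_\ell\lambda_k^{\,2t+1+\ell}=\lambda_k^{\,2t+1}\lambda_k^{\,1-2t}=\lambda_k^{2}$ by the very same identity used on the signal; thus the $\lambda^{2t}$ prefactor is annihilated on the error too, so despite $\lVert z\rVert_1\asymp\min_k\lvert\lambda_k\rvert^{-2t}$ being super-polylogarithmic there is no amplification. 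What remains is $O(\sum_k\lambda_k^{2}\,\lvert q_k(\theta_u)-q_k(\theta_a)\rvert\cdot B\cdot\lvert\Xi\rvert)=O((\ln n)^{-\frac12(\kappa-\rho)})$ from the coarse part, plus $n^{-1/2+o(1)}$ from the deep part and from the matrix concentration --- which survives the at most $\lVert z\rVert_1\cdot\max_k\lvert\lambda_k\rvert^{\,t}=n^{o(1)}$ amplification. A union bound closes the argument: the $O(nr)$ events controlling the $\Xi_{u,k}$ fail with probability $\exp(-\Theta((\ln n)^{1+\rho}))$ each, and the $O(n^2)$ events for BFS layer sizes, deep parts and matrix noise fail with probability $\exp(-n^{\Omega(1)})$ or $\exp(-\Theta((\ln n)^{1+\rho}))$, so all hold simultaneously with probability $1-O(n^2\exp(-\Theta((\ln n)^{1+\rho})))$, giving $\max_{u,a}\lvert\hat d(u,a)-d(\theta_u,\theta_a)\rvert=O((\ln n)^{-\frac12(\kappa-\rho)})$.

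I expect the main obstacle to be the second ``$\approx$'' together with this cancellation: one must establish the level-by-level concentration of the random path-product averages $v^{(s)}_u$ \emph{uniformly} over all roots $u$, all coordinates $k$ and all $s\le t+r'$, which is delicate because the BFS exploration has a nested dependency structure --- paths share prefixes, descendants share latent variables --- so a variance-based exploration martingale is needed and its error budget must be weighed against an $n^{2}$ union bound over a branching factor that is only polylogarithmic; and one must extract the $s$-independent, $\Lambda^{\,\mathrm{depth}}$-covariant part of the fluctuation precisely enough that the Vandermonde weights $z$ kill its (potentially super-polylogarithmic) $\lambda^{2t}$ prefactor. Calibrating $t$ exactly --- the formula with the constants $0.08$ and $0.275$, which absorb the collision slowdown of BFS once a constant fraction of $[n]$ has been reached --- is what makes both of these go through.
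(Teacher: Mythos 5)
Your plan reproduces the paper's proof strategy: the identity $\sum_{\ell}z_\ell\lambda_k^{\ell-1}=\lambda_k^{-2t}$ from $\Lambda^{2t+2}\tLambda z=\Lambda^2\bOne$ to collapse the signal, a level-by-level martingale (the paper's Lemma \ref{lemma:nhbrhd_vectors} / events $\cA^2$) to control $Q\tN_{u,s}$, Bernstein for the bilinear form against $\Mpp+\Mpind$ (Lemma \ref{lemma:NMN_conc}, using the $Z_{\ind}$-resampling to decouple from $\cEp$), and the Vandermonde-inverse norm bound to control $\|z\|_1\asymp|\lambda_r|^{-2t}$. Your $\Xi_{u,k}$ "co-scaling" picture is a cleaner packaging of what the paper does term-by-term: the paper's $\eqref{eq:term3}$ gets exactly the $\lambda_k^{-2t}\times\lambda_k^{2t}$ annihilation you describe, while $\eqref{eq:term1}$--$\eqref{eq:term2}$ (your matrix-noise and "deep" parts) are $n^{-1/2+o(1)}$ and merely survive the $n^{o(1)}$ amplification from $\|z\|_1$; the rates $(\ln n)^{-(\kappa-\rho)/2}$ and failure probability $n^2\exp(-\Theta((\ln n)^{1+\rho}))$ come out the same way. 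The one ingredient you mention that the paper does not need is the $\E[N^2]\le\E[N]$ variance sharpening; the paper gets by with $N\in[0,1]$ and $\|N_{u,h}\|_2^2\le|\cS_{u,h}|$ inside its sub-exponential Azuma argument, so that refinement is dispensable. In short: correct, and essentially the same route.
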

Therefore, property \ref{ass:good_distances.2} is satisfied with probability $1 - o(1)$ 
for some $\Delta =  \Theta\left( (\ln n)^{ - \frac12 (\kappa - \rho)}\right)$ for any $\rho \in (0, \kappa)$. 

\medskip
\noindent{\em Sufficient representation and Property \ref{ass:good_distances.3}.} Since $f$ is $L$-Lipschitz, the distance $d$ as defined in \eqref{eq:ideal.dist1} is bounded
above by squared $\ell_2$ distance: 
\begin{align}
d(\theta_u,\theta_v) & = \|\Lambda Q (e_u - e_v)\|_2^2 ~= \int_0^1 (f(\theta_u, y) - f(\theta_v, y))^2 dy \\
&\leq L^2 |\theta_u - \theta_v|^2. \label{eq:ideal.dist2.a}
\end{align}
Note that the only difference in \eqref{eq:ideal.dist1.a} and \eqref{eq:ideal.dist2.a} is the constant 
$L^2 |\lambda_1|^{2t}$ versus $L^2$. It follows by a similar argument that with probability at least $1 -  n \exp\left(- \frac{\delta^2 (n-1) \sqrt{\eta'}}{2 L}\right)$, for any $\eta' \in (0, L^2)$, 
property \ref{ass:good_distances.3} is satisfied with $\meas(\eta')  = \frac{(1-\delta) \sqrt{\eta'}}{L}$. 

\medskip
\noindent{\em Concluding Proof of Theorem \ref{lemma:case2_assumption}.} In summary, with probability at least $1 - \alpha$ for 
\[\alpha = O\Big(n^2 \exp(-\Theta((\ln n)^{1+\rho})) \Big) + n \exp\left(- \frac{\delta^2 (n-1) \sqrt{\eta'}}{2 L}\right),\]
properties \ref{ass:good_distances.1}-\ref{ass:good_distances.3} are satisfied for the estimate $\hat{d}$ computed from \eqref{eq:dist2} with $t = \lceil \frac{\ln (0.08/p) }{\ln (0.275 np)} - r' \rceil$, and the choices of
\begin{align}
d(\theta_u, \theta_v) &= \|\Lambda Q (e_u - e_v)\|_2^2, \nonumber \\
\bias(\eta) & = 2 B \sqrt{r \eta}, \nonumber \\
\Delta & = \Theta\left( (\ln n)^{ - \frac12 (\kappa - \rho)} \right), \nonumber \\
\meas(\eta') & = \frac{(1-\delta) \sqrt{\eta'}}{L}, \label{eq:summary.2}
\end{align}
for any $\eta > 0$, $\rho \in (0, \kappa)$, $\delta \in (0,1)$ and $\eta' = \eta - \Delta \in (0, L^2)$. By substituting the expressions for $\bias$, $\meas$, and $\alpha$ into Lemma \ref{lemma:nearest_neighbor}, it follows that
\begin{align}
\MSE(\hat{F}) &\leq 4B^2 r (\eta + \Delta) + \frac{2\sigma^2 L^2}{(1-\delta)^3 p n^2 (\eta-\Delta)} + \exp\Bigg( -\frac{\delta^2 p n^2 (1-\delta)^2 (\eta-\Delta)}{4L^2}\Bigg) \nonumber \\
& \quad  + O\Big(n^2 \exp(-\Theta((\ln n)^{1+\rho})) \Big) + n \exp\left(- \frac{\delta^2 (n-1) \sqrt{\eta - \Delta}}{2 L}\right). \label{eq:mse.2.1}
\end{align}
Additionally, for any $\delta' \in (0,1)$, 
\begin{align}
\max_{(u,v)\in[n]^2} |\hat{F}(u, v) - f(\theta_u,\theta_v)| \leq 2B \sqrt{r (\eta + \Delta)} + \delta' \label{eq:high_prob_bd2}
\end{align}
with probability at least
\begin{align*}
& 1 - n^2 \exp\left(-\tfrac{\delta^2 (1-\delta)^2 p n^2  (\eta-\Delta)}{4 L^{2}} \right)- n^2 \exp\left(-\tfrac{\delta'^2 (1 - \delta)^3 p n^2 (\eta-\Delta)}{ L^{2}}\right) \\
& \qquad - O\Big(n^2 \exp(-\Theta((\ln n)^{1+\rho})) \Big) - n \exp\left(- \frac{\delta^2 (n-1) \sqrt{\eta-\Delta}}{2  L}\right).
\end{align*}
By selecting $\eta = \Theta\left( \left(\frac{\ln^{1+\rho} n}{np}\right)^{1/2}\right) = \Theta\Big((\ln n)^{ - \frac12 (\kappa - \rho)} \Big)$ with a large enough constant, it follows that 
\begin{align*}
\eta \pm \Delta &= \Theta(\eta) = \Theta(\Delta), \\
pn^2 \eta &= \Omega(n), \\
n \sqrt{\eta} &= \omega(\sqrt{n}).
\end{align*}
By substituting this choice of $\eta$ and $\delta = \frac12$ into \eqref{eq:mse.2.1} it follows that 
\begin{align}
\MSE(\hat{F}) & = O(\eta) ~=~ O\Big((\ln n)^{ - \frac12 (\kappa - \rho)} \Big).
\end{align}
By choosing $\delta' = \Theta(\sqrt{\eta})$, it follows that $\delta'^2 pn^2 \eta = \omega(\sqrt{n})$. Therefore, by substituting into \eqref{eq:high_prob_bd2}, it follows that with probability $1 - o(1)$, 
\begin{align}
\max_{(u,v)\in[n]^2} |\hat{F}(u, v) - f(\theta_u,\theta_v)| & = O(\sqrt{\eta}) ~=~ O\Big((\ln n)^{ - \frac14 (\kappa - \rho)}\Big).
\end{align}
This completes the proof of Theorem \ref{lemma:case2_assumption}. \hfill \Halmos

\section{Proving distance estimates are close when $f$ has rank $r$} \label{sec:distance_estimates_proof}

This section is dedicated to establishing that the distance estimates \eqref{eq:dist1} and \eqref{eq:dist2} are good
approximations of the desired ideal distances as claimed in the statements of Lemmas \ref{lemma:dist_1} and \ref{lemma:dist_2} when $f$ has rank $r$. We start by establishing key auxiliary concentration results which will lead to their proofs. 

\subsection{Regular enough growth of bread-first-search tree}

Recall that we grow the neighborhood of each $u \in [n]$ in $\cG=([n], \cE')$ and use associated observations in 
$\Mp$ as well as $\Mpp$ to compute the distance estimates $\hat{d}$. By the assumed Bernoulli sampling model, any tuple $(a, b) \in [n]^2$ is independently included in $\cE'$ with probability $p/4$. Therefore, the expected number of immediate neighbors of $u$ (not including itself) is $(n-1)p/4 \approx np/4$. The expected number of nodes at distance $s \geq 1$ from a given $u$ scales as $(np/4)^s$. 
We define some necessary notation before we present the formal statement of this event. 
Given $\delta \in (0,1)$, define 
\begin{align}
\phi(\delta) & = 1 - \left(\frac{1-\delta}{1-\delta\sqrt{2/3}}\right)^{1/2} ~<~1.
\end{align}
For any $p = \omega\big(\frac1n\big)$ and $p = o(1)$, 
\begin{align}\label{eq:sdelta}
s^*(\delta, p, n) & = \sup\Big\{s \geq 1: \frac{p}{8} \left(\frac{(1 + \delta) np}{4} \right)^{s-1} \leq \phi(\delta)\Big\}. 
\end{align}

For any given $\delta$, $s^*(\delta, p, n)$ is well defined for $n$ large enough since $p =o(1)$.
\begin{lemma} \label{lemma:nbrhd_growth}
Let $\omega(\frac{1}{n}) \leq p \leq o(1)$, $\delta \in (0,1)$. For $1\leq s \leq s^*(\delta, p, n)$,  
\begin{align*}
\Prob{\bigcup_{h=1}^{s} \left\{|\cS_{u,h}| \notin \left[\left(\frac{(1 - \delta) n p}{4}\right)^h, \left(\frac{(1 + \delta) n p}{4}\right)^h\right] \right\} }
&\leq 4 \exp\left(-\frac{\delta^2 ((1 - \delta) n p)}{12(1-\delta\sqrt{2/3})} \right)
\end{align*}
\end{lemma}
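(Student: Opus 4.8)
The plan is to reveal the breadth-first exploration of $\cG = ([n], \cEp)$ from $u$ one level at a time and run an induction on the depth $h$, showing that, conditioned on all shallower levels having grown at the correct geometric rate, the next level does so too with overwhelming probability. Formally, reveal $\cS_{u,0} = \{u\}$, then $\cS_{u,1}$, and so on, and let $\mathcal{F}_{h-1}$ be the $\sigma$-algebra generated by the first $h-1$ levels together with all edges of $\cEp$ inspected while exploring them. The key structural observation is that for a vertex $w \notin V_{\le h-1} := \bigcup_{j\le h-1}\cS_{u,j}$, the event $\{w \in \cS_{u,h}\}$ coincides with the event that $w$ is joined by at least one $\cEp$-edge to $\cS_{u,h-1}$, and all of those candidate edges are still ``fresh'' (uninspected), since both endpoints were unexplored when level $h-1$ was processed. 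Hence, conditioned on $\mathcal{F}_{h-1}$, $|\cS_{u,h}| \sim \mathrm{Binomial}\big(n - |V_{\le h-1}|,\, 1 - (1-\tfrac p4)^{|\cS_{u,h-1}|}\big)$, with the membership indicators mutually independent across $w$ because they depend on disjoint edge sets.

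Next I would sandwich the conditional mean on the ``good event''. Set $L_j = ((1-\delta)np/4)^j$, $U_j = ((1+\delta)np/4)^j$, and $G_h = \bigcap_{j\le h}\{|\cS_{u,j}| \in [L_j, U_j]\}$. On $G_{h-1}$, using $1-(1-x)^m \le mx$ on one side and the Bonferroni bound $1-(1-x)^m \ge mx(1-\tfrac{mx}{2})$ on the other (with $x = p/4$ and $m = |\cS_{u,h-1}| \in [L_{h-1},U_{h-1}]$), the defining inequality of $s^*(\delta,p,n)$ — which guarantees $\tfrac p8 U_{h-1} \le \phi(\delta)$ for every $h \le s \le s^*$ — and the crude bound $|V_{\le h-1}| \le 1 + \sum_{j\le h-1}U_j = o(n)$ (again via $s^*$ and $np = \omega(1)$), one gets that $\mu_h := \E[\,|\cS_{u,h}| \mid \mathcal F_{h-1}\,]$ lies between $(1-o(1))\tfrac{1-\phi(\delta)}{1-\delta}L_h$ and $\tfrac{1}{1+\delta}U_h$, and that $\mu_h = \Omega(np)$ uniformly in $h \le s$. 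The peculiar definition of $\phi(\delta)$ is exactly what makes $\tfrac{1-\phi(\delta)}{1-\delta} > 1$, so that $\mu_h$ sits strictly inside $[L_h, U_h]$ with a multiplicative gap at each end.

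A multiplicative Chernoff bound applied to the conditional binomial then gives $\Prob{|\cS_{u,h}| \notin [L_h, U_h] \mid \mathcal F_{h-1}} \le 2\exp(-c(\delta)\mu_h)$ on $G_{h-1}$ for an explicit constant $c(\delta)$. Using the telescoping inclusion $G_s^c \subseteq \bigcup_{h=1}^s \big(G_{h-1} \cap \{|\cS_{u,h}| \notin [L_h,U_h]\}\big)$ and taking expectations yields $\Prob{G_s^c} \le \sum_{h=1}^s 2\exp(-c(\delta)\mu_h)$; since the $\mu_h$ grow geometrically in $h$, this geometric-type sum is dominated by its $h=1$ term up to a factor at most $2$ (which produces the constant $4$ in the statement), and substituting the sandwich bound for $\mu_1 = \Theta(np)$ while tracking $c(\delta)$ gives the exponent $\tfrac{\delta^2(1-\delta)np}{12(1-\delta\sqrt{2/3})}$.

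\textbf{Main obstacle.} I expect the delicate part to be the conditioning argument together with the constant bookkeeping in the middle step: one must justify carefully that the already-explored subgraph leaves the relevant edges fresh so that the clean conditional-binomial law holds, and then propagate the two multiplicative corrections — $1-(1-p/4)^m$ versus $mp/4$, and $n - |V_{\le h-1}|$ versus $n$ — through the induction while keeping them uniformly small; the definitions of $\phi(\delta)$ and $s^*(\delta,p,n)$ are precisely calibrated for exactly this, and the final constant $\tfrac{1}{12(1-\delta\sqrt{2/3})}$ arises from balancing the lower-tail ($\exp(-\epsilon^2\mu/2)$) against the upper-tail ($\exp(-\epsilon^2\mu/3)$) Chernoff exponents in the presence of those corrections.
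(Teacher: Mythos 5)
Your proposal follows essentially the same route as the paper's proof (Lemma A.2 in the appendix): condition on the filtration generated by the already-explored BFS levels, observe that $|\cS_{u,h}|$ given $\mathcal F_{h-1}$ is binomial on the fresh vertices with success probability $1-(1-p/4)^{|\cS_{u,h-1}|}$, use the definition of $s^*$ and $\phi(\delta)$ to keep the conditional mean inside $[L_h,U_h]$ with a multiplicative gap of $(1+\delta)$ on one side and $\delta\sqrt{2/3}$ on the other, apply two-sided Chernoff, and sum the resulting geometric series (the factor $4$ being two from the tails times two from the sum). The only deviations are cosmetic constant-tracking: the paper uses the coarser bound $n-|\cB_{u,h-1}|\ge (1-\phi(\delta))n$ rather than your $(1-o(1))n$, which is exactly what makes the exponent come out to $\tfrac{\delta^2(1-\delta)np}{12(1-\delta\sqrt{2/3})}$ verbatim via the identity $(1-\delta\sqrt{2/3})(1-\phi(\delta))^2=1-\delta$; your asymptotic bound yields a slightly stronger exponent for large $n$, which still implies the claim, so nothing breaks.
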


The proof of Lemma \ref{lemma:nbrhd_growth} follows from standard argument using repeated application of Chernoff's bound
and is well known in the literature in various forms. For completeness, we have included it in the Appendix. Lemma \ref{lemma:nbrhd_growth} suggests definition of events that will hold with high probability. Specifically, for any $u \in [n]$
and $h \geq 1$, define 
\begin{align}\label{eq:event.1}
\cA^1_{u,h}(\delta) & = \left\{|\cS_{u,h}| \in \left[\left(\frac{(1 - \delta) n p}{4}\right)^h, \left(\frac{(1 + \delta) n p}{4}\right)^h\right]\right\}.
\end{align}
We note that by event $\cA^1_{u,h}(\delta)$ we simply require that the number of nodes at distance $h$ from a given node $u \in [n]$ is nearly $(np/4)^h$. However, it does not impose any restrictions on how the nodes are connected or the latent parameters associated with the nodes themselves.

\subsection{Concentration of a Quadratic Form One}

The event $\cap_{h=1}^{s+\ell} \cA^1_{u,h}(\delta)$ implies that the size of $|\cS_{u,h}|$ grows regularly as expected 
for $h \leq s+\ell  \leq s^*(\delta, p, n)$. Conditioned on this event, we prove that a specific quadratic form concentrates around its mean. This will 
be used as the key property to eventually establish that the distance estimates are a good approximation to the ideal distances. 
\begin{lemma} \label{lemma:nhbrhd_vectors}
Let $\omega(\frac{1}{n}) \leq p \leq o(1)$, $\delta \in (0,1)$,  $s \geq 0, \ell\geq 1$ for $s+\ell \leq s^*(\delta, p, n)$. Then
\begin{align*}
& \Prob{|e_k^T Q \tN_{u,s+\ell} - e_k^T \Lambda^{\ell} Q \tN_{u,s}| \geq \lambda_k^{\ell} ((1-\delta) np/4)^{-(s+1)/2} x ~\Big|~ \cap_{h=1}^{s+\ell} \cA^1_{u,h}(\delta)} \\
&\qquad \leq 2 \exp\left(- \frac{x^2 \lambda_k^2}{4}\right),
\end{align*}
as long as $x < \frac{2 ((1-\delta) np/4)^{(s+1)/2}}{B |\lambda_k|(1+|\lambda_k|)}$.
\end{lemma}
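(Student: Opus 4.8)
Fix $k\in[r]$. The plan is to condition on the labeled breadth‑first tree $\cT_u$ truncated at depth $s+\ell$ (equivalently, on the sets $\cS_{u,1},\dots,\cS_{u,s+\ell}$ together with all parent assignments in $\cT_u$) and on $\theta_u$, to work on the event $\bigcap_{h=1}^{s+\ell}\cA^1_{u,h}(\delta)$, and then to prove the concentration over the remaining randomness --- the latent variables $\{\theta_i\}$ of the vertices of $\cT_u$ and the weights $\{M^\prime(a,b)=Z(a,b)\}$ on its edges --- by a martingale argument. This conditioning is harmless, since the Bernoulli sampling defining $\cEp$ and the uniform tie‑breaking used to pick $\cT_u$ are independent of the latent variables; hence under it the $\theta_i$ remain i.i.d.\ $U[0,1]$ and the $Z(a,b)$ remain conditionally independent with $\E[Z(a,b)\mid\theta_a,\theta_b]=f(\theta_a,\theta_b)$, while $\bigcap_h\cA^1_{u,h}(\delta)$ forces $|\cS_{u,h}|\in\big[((1-\delta)np/4)^h,((1+\delta)np/4)^h\big]$ for every $h\le s+\ell$.

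The engine is the identity $\int_0^1 q_k(y)f(\theta,y)\,dy=\lambda_k q_k(\theta)$, immediate from the finite‑spectrum form of $f$ and orthonormality of the $q_h$. Writing $e_k^TQ\,\tN_{u,h}=|\cS_{u,h}|^{-1}\sum_{i\in\cS_{u,h}}q_k(\theta_i)\prod_{(a,b)\in\text{path}_{\cT_u}(u,i)}M^\prime(a,b)$ and peeling off the fresh latent variable and weight on each path edge from the leaf inward with this identity, one gets $\E[e_k^TQ\,\tN_{u,h}\mid\cT_u,\theta_u]=\lambda_k^{\,h}q_k(\theta_u)$ for every $h\le s+\ell$, so $\E[e_k^TQ\,\tN_{u,s+\ell}-\lambda_k^{\ell}e_k^TQ\,\tN_{u,s}\mid\cT_u,\theta_u]=0$. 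I would then use the Doob martingale obtained by revealing the pairs $(\theta_v,M^\prime(\mathrm{par}(v),v))$ vertex by vertex in order of non‑decreasing depth, $\mathrm{par}(v)$ the parent of $v$ in $\cT_u$; it starts at $0$ and ends at $e_k^TQ\,\tN_{u,s+\ell}-\lambda_k^{\ell}e_k^TQ\,\tN_{u,s}$. Anchoring the martingale \textbf{at the root} is essential: started at depth $s$ one only has $\E[e_k^TQ\,\tN_{u,s+\ell}\mid\text{depth}\le s]=\lambda_k^{\ell}|\cS_{u,s+\ell}|^{-1}\sum_{i\in\cS_{u,s}}|C_i|\,q_k(\theta_i)\prod_{\text{path}(u,i)}M^\prime$ with $C_i$ the depth‑$(s+\ell)$ descendants of $i$, which is \emph{not} $\lambda_k^{\ell}e_k^TQ\,\tN_{u,s}$ because the depth‑$s$ vertices carry unequal subtrees; anchoring at the root folds that discrepancy into a mean‑zero martingale.

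A short computation shows the increment at a vertex $v$ of depth $m$ equals $c_v\big(M^\prime(\mathrm{par}(v),v)q_k(\theta_v)-\lambda_k q_k(\theta_{\mathrm{par}(v)})\big)$, where $c_v$ is the revealed path‑product from $u$ to $\mathrm{par}(v)$, times $\lambda_k^{\,s+\ell-m}$, times $\big(|C_v^{(s+\ell-m)}|/|\cS_{u,s+\ell}|-\Ind(m\le s)\,|C_v^{(s-m)}|/|\cS_{u,s}|\big)$, with $C_v^{(L)}$ the depth‑$(m+L)$ descendants of $v$. Two bounds drive the concentration: deterministically $|M^\prime(\mathrm{par}(v),v)q_k(\theta_v)-\lambda_k q_k(\theta_{\mathrm{par}(v)})|\le B(1+|\lambda_k|)$; and, crucially, the conditional second moment is $\E[(M^\prime(\mathrm{par}(v),v)q_k(\theta_v))^2\mid\text{past}]=\int_0^1 f(\theta_{\mathrm{par}(v)},y)q_k(y)^2\,dy\le\int_0^1 q_k(y)^2\,dy=1$, which uses $\|q_k\|_2=1$ rather than only $\|q_k\|_\infty\le B$. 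Hence each increment is bounded by $|c_v|B(1+|\lambda_k|)$ with conditional variance $\le c_v^2$; for $m>s$, $|c_v|\le|\lambda_k|^{s+\ell-m}|C_v^{(s+\ell-m)}|/|\cS_{u,s+\ell}|$, while for $m\le s$ the two subtree ratios are both $\approx(np/4)^{-m}$ and nearly cancel, making those increments lower order. Summing conditional variances over all vertices, using the layer‑size bounds from $\bigcap_h\cA^1_{u,h}(\delta)$ together with subtree regularity of the form $\sum_{v\in\cS_{u,m}}|C_v^{(s+\ell-m)}|^2=\Theta\big((np/4)^{2(s+\ell)-m}\big)$, and summing the resulting convergent geometric series in $\lambda_k^2/(np/4)$, I expect the predictable quadratic variation to be $O\big(\lambda_k^{2(\ell-1)}((1-\delta)np/4)^{-(s+1)}\big)$ and the maximal increment $O\big(B(1+|\lambda_k|)|\lambda_k|^{\ell-1}((1-\delta)np/4)^{-(s+1)}\big)$, both dominated by depth $m=s+1$. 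Freedman's martingale inequality then yields, on $\bigcap_h\cA^1_{u,h}(\delta)$, a tail of the form $2\exp(-x^2\lambda_k^2/4)$ for the deviation $\lambda_k^{\ell}((1-\delta)np/4)^{-(s+1)/2}x$ exactly in the regime where the quadratic‑variation term dominates the maximal‑increment term, i.e.\ for $x\lesssim((1-\delta)np/4)^{(s+1)/2}/(B|\lambda_k|(1+|\lambda_k|))$ --- which is precisely the stated range restriction and explains the factor $B|\lambda_k|(1+|\lambda_k|)$ there.

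The main obstacle is the quadratic‑variation bookkeeping in the last step, and inside it two points: (i) realizing the martingale must be anchored at the root so its starting value is $0$ \emph{exactly} --- this is what neutralizes the unequal‑subtree discrepancy, which one could not otherwise dispose of; and (ii) the subtree‑size regularity $\sum_{v\in\cS_{u,m}}|C_v^{(L)}|^2=\Theta((np/4)^{2(s+\ell)-m})$ and the near‑cancellation of the two subtree ratios for $m\le s$, which need control of \emph{subtree} sizes, not merely of the total layer sizes supplied by $\bigcap_h\cA^1_{u,h}(\delta)$. This last ingredient follows from the same branching‑process concentration underlying Lemma~\ref{lemma:nbrhd_growth}, applied within $\cT_u$ (absorbing the extra event into the conditioning), and is the heaviest technical piece; extracting the exact constant $1/4$ in the exponent is then just a matter of tracking the $(1\pm\delta)$ factors and the bounded sum $\sum_{r\ge0}(\lambda_k^2np/4)^{-r}$.
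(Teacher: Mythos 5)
Your proposal takes a genuinely different route from the paper's, and the divergence lies in the conditioning and the resulting martingale decomposition. The paper's $\cF_{u,h}$ is generated by the BFS tree \emph{only up to depth $h$} (together with the latents and edge weights it involves); the tree beyond depth $h-1$ is not in $\cF_{u,h-1}$. Consequently, conditioned on $\cF_{u,h-1}$, the parent of a depth-$h$ vertex is still random and, by exchangeability of $\cS_{u,h-1}$, uniform there. This is precisely what makes $\E[Y_{u,h}\mid\cF_{u,h-1}]=Y_{u,h-1}$ hold and lets the paper anchor the martingale at depth $s$ with only $\ell$ increments $D_{u,h}=\sum_{i\in\cS_{u,h}}X_i$. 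The resulting increment and variance bounds involve the layer sizes $|\cS_{u,h}|$ alone, which is exactly what $\cap_h\cA^1_{u,h}(\delta)$ supplies.

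You instead freeze the entire labeled tree $\cT_u^{s+\ell}$ (all parent assignments included) at the outset. Under that conditioning parents are fixed, so your observation that $\E[e_k^TQ\tN_{u,s+\ell}\mid\text{depth}\le s]\neq\lambda_k^\ell e_k^TQ\tN_{u,s}$ when the subtrees are unbalanced is correct, and anchoring at the root is then forced. But this is a consequence of conditioning on strictly more information than the paper does, not a defect of the paper's decomposition: the paper's wording ``consider any realization of the tree $\cT_u^{s+\ell}$'' is admittedly misleading here, but its actual computation (the uniform-parent step) only makes sense under the layer-by-layer filtration, which is what it uses.

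That choice carries a real cost, and this is where your proposal has a gap. Your vertex-by-vertex increments involve subtree-size ratios $|C_v^{(L)}|/|\cS_{u,s+\ell}|$, so the quadratic variation needs $\sum_{v\in\cS_{u,m}}|C_v^{(L)}|^2=\Theta\bigl((np/4)^{2(s+\ell)-m}\bigr)$ and, for $m\le s$, a quantitative near-cancellation of two subtree ratios. The hypothesis $\cap_h\cA^1_{u,h}(\delta)$ constrains only the layer totals $|\cS_{u,h}|$; there are trees satisfying it whose subtrees are badly unbalanced (one depth-$s$ vertex carrying almost all descendants), and on such a tree your quadratic variation is far larger than claimed. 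You acknowledge this and propose to ``absorb the extra event into the conditioning,'' but that changes the statement being proved and requires a separate probability bound for the extra event, neither of which you carry out. The paper's layer-by-layer decomposition sidesteps the subtree issue entirely. Your per-step deterministic bound and your second-moment estimate (using $\|q_k\|_2=1$ and the boundedness of $M^\prime$) do match the paper's; the divergence, and the gap, is in the martingale construction.
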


\paragraph{Proof of Lemma \ref{lemma:nhbrhd_vectors}.}
Recall that conditioning on event $\cap_{h=1}^{s+\ell} \cA^1_{u,h}(\delta)$ simply imposes the restriction that the neighborhood
of $u \in [n]$ grows at a specific rate, i.e. number of nodes at distances $h \leq s+\ell$ is within
$((1\pm \delta) np/4)^h$. However, this event is independent from latent parameters $\{\theta_i\}_{i \in [n]}$ and 
the realization of observations $M(i,j) = Z(i,j)$ for $(i,j) \in [n]\times[n]$. Consider any realization of the tree $\cT_u^{s+\ell}$ satisfying $\cap_{h=1}^{s+\ell} \cA^1_{u,h}(\delta)$; the tree contains information regarding the depth $s +\ell$ neighborhood of $u$. Given such a realization, let $\cF_{u,h}$ for $0\leq h \leq s+\ell$ denote the sigma-algebra containing information about the latent
parameters, edges and the values associated with $\cT_u^h$, i.e. the depth $h$ BFS tree rooted at $u$. Specifically, 
$\cF_{u, 0}$ contains information about latent parameter $\theta_u$ associated with $u \in [n]$; $\cF_{u,s}$ contains information about latent parameters $\cup_{h=1}^s \{\theta_i\}_{i \in \cS_{u,h}}$ and all edges and observations involved in the depth $h$ BFS tree, i.e. $\{M(i,j)\}_{(i,j) \in \cT_u^h}$. This implies that 
 $ \cF_{u,0} \subset \cF_{u, 1} \subset \cF_{u,2}$, etc.

We shall consider a specific martingale sequence with respect to the filtration $\cF_{u, h}$ that will help establish the desired concentration of $e_k^T Q \tN_{u,s+\ell} - e_k^T \Lambda^{\ell} Q \tN_{u,s}$. For $s+1 \leq h \leq s+\ell$, define 
\begin{align*}
Y_{u,h} &= e_k^T \Lambda^{s+\ell-h} Q \tN_{u,h} \\
D_{u,h} &= Y_{u,h} - Y_{u,h-1} \\
Y_{u,s+\ell} - Y_{u,s} &= e_k^T Q \tN_{u,s+\ell} - e_k^T \Lambda^{\ell} Q \tN_{u,s} \\
&= \sum_{h=s+1}^{s+\ell} D_{u,h}
\end{align*}
Note that $Y_{u,h}$ is measurable with respect to $\cF_{u,h}$ because $e_k^T \Lambda^{s+\ell-h} Q \tN_{u,h}$ only depends on observations in $\cT_u^h$ and latent variables associated to vertices in $\cS_{u,h}$. We will show that $Y_{u,h}$ is martingale with
finite mean with respect to $\cF_{u,h}$ for $s+1\leq h \leq s+\ell$,
\begin{align}\label{eq:martingale}
\E[Y_{u,h} - Y_{u,h-1}~|~ \cF_{u,h-1}] = 0 \text{ and } \E[|D_{u,h}|] & < \infty.
\end{align}
For any $s+1\leq h \leq s+\ell$, 
\begin{align*}
D_{u,h} &= Y_{u,h} - Y_{u,h-1} \\
&= \lambda_k^{s+\ell-h} \left(e_k^T Q \tN_{u,h} - \lambda_k e_k^T Q \tN_{u,h-1}\right) \\
&= \lambda_k^{s+\ell-h} \left(\frac{1}{|\cS_{u,h}|} e_k^T Q N_{u,h} - \lambda_k e_k^T Q \tN_{u,h-1}\right) \\
&= \lambda_k^{s+\ell-h} \left(\frac{1}{|\cS_{u,h}|} \sum_{i \in \cS_{u,h}} N_{u,h}(i) q_k(\theta_i) - \lambda_k e_k^T Q \tN_{u,h-1}\right) \\
&= \sum_{i \in \cS_{u,h}} X_i,
\end{align*}
where for $i \in \cS_{u,h}$, we define 
\begin{align}\label{eq:xi}
X_i &\triangleq \frac{\lambda_k^{s+\ell-h}}{|\cS_{u,h}|} \left(N_{u,h}(i) q_k(\theta_i) - \lambda_k e_k^T Q \tN_{u,h-1}\right).
\end{align}
By definition, 
\begin{align}
N_{u,h}(i) & = \sum_{j \in S_{u, h-1}} \Ind((i,j) \in \cE') M(i,j) N_{u,h-1}(j).
\end{align}
Conditioned on $\cF_{u,h-1}$, $N_{u,h-1}(j)$ for $j \in S_{u,h-1}$ is determined and so is $\theta_j$. However, 
$\theta_i$ is conditionally independent random variable. Also, given the construction of the breadth-first-search
tree, for any given $i \in S_{u, h}$ any of the $j \in S_{u, h-1}$ is equally likely to be its parent with probability $1/|S_{u, h-1}|$.
Therefore, we have that $X_i, ~i \in \cS_{u,h}$ are independent and 
\begin{align}
& \E\Big[X_i | \cF_{u, h-1}\big]   \label{eq:xi.mean} \\
& \quad = \frac{\lambda_k^{s+\ell-h}}{|\cS_{u,h}|} 
\Big(\sum_{j \in S_{u, h-1}} \frac{1}{|S_{u, h-1}|} \E[f(\theta_i, \theta_j) q_k(\theta_i) | \theta_j] N_{u, h-1}(j) 
-  \lambda_k e_k^T Q \tN_{u,h-1}\Big). \nonumber
\end{align}
Now $N_{u, h-1}(j)/|S_{u, h-1}| = \tN_{u, h-1}(j)$. And 
 \begin{align*}
 \E[f(\theta_i, \theta_j) q_k(\theta_i) | \theta_j] 
 & = \sum_{k'=1}^r \lambda_{k'} \E[q_{k'}(\theta_i) q_{k'}(\theta_j) q_k(\theta_i) | \theta_j] \\
 & = \sum_{k'=1}^r \lambda_{k'} q_{k'}(\theta_j) \E[q_{k'}(\theta_i)  q_k(\theta_i)] \\
 & = \lambda_k q_k(\theta_j), 
\end{align*}
where we use the orthonormality of $q_{k'}, ~k' \in [r]$. Therefore, 
\begin{align*}
\sum_{j \in S_{u, h-1}} \frac{1}{|S_{u, h-1}|} \E[f(\theta_i, \theta_j) q_k(\theta_i) | \theta_j] N_{u, h-1}(j) 
& = \sum_{j \in S_{u, h-1}}  \lambda_k q_k(\theta_j) \tN_{u, h-1}(j) \\
& = \lambda_k e_k^T Q \tN_{u,h-1}.
\end{align*}
Therefore, we conclude that for $i \in S_{u, h}$
\begin{align}\label{eq:zero.mean}
\E\Big[X_i | \cF_{u, h-1}\big] & = 0.
\end{align}
That is, $\E[Y_{u,h} - Y_{u, h-1} | \cF_{u,h-1}] = 0$. By definition, we have $N_{u,h}(i) \in [0,1]$ 
for any $i \in \cS_{u, h}$ and $\|q_k\|_\infty \leq B$. Therefore, it follows that for any $i \in  S_{u,h}$, 
\begin{align}
|X_i| & \leq \frac{B (1 + |\lambda_k|) |\lambda_k|^{s+\ell-h}}{|\cS_{u,h}|}.
\end{align}
Therefore, it follows that 
\begin{align}
|D_{u,h}|  & \leq B (1 + |\lambda_k|) |\lambda_k|^{s+\ell-h}.
\end{align}
Thus, we have $\{ (D_{u,h}, \cF_{u,h}): s+1\leq h \leq s+\ell \}$ as a martingale difference sequence with differences being
uniformly bounded. Now we wish to establish its concentration. To that end, consider $X_i$ for $i \in \cS_{u,h}$ as defined in 
\eqref{eq:xi}. Its variance is bounded as 
\begin{align*}
& \Var[X_i ~|~ \cF_{u, h-1}]  \\
& \quad = \frac{\lambda_k^{2(s+\ell-h)}}{|\cS_{u,h}|^2} \Var\Big[\sum_{j \in S_{u, h-1}} \Ind((i,j) \in \cE') M(i,j) N_{u,h-1}(j) q_k(\theta_i)  ~|~ \cF_{u, h-1} \Big].
\end{align*}
Since $\Var[Z] \leq \E[Z^2]$ for any $Z$, we can upper bound the variance expression by the second moment, additionally using the fact that $\Ind((i,j) \in \cE')$ only takes value 1 for a single $j \in S_{u, h-1}$ and otherwise takes value $0$,
\begin{align*}
& \Var[X_i ~|~ \cF_{u, h-1}]  \\
& \quad = \frac{\lambda_k^{2(s+\ell-h)}}{|\cS_{u,h}|^2} \E\Big[\sum_{j \in S_{u, h-1}} \Ind((i,j) \in \cE') M(i,j)^2 N^2_{u,h-1}(j) q^2_k(\theta_i)  ~|~ \cF_{u, h-1} \Big].
\end{align*}
We use the fact that $M(i,j)^2 \leq 1$, $\E[q_k^2(\theta_i)] = 1$ due to orthonormality assumptions on $q_k$, for $i \in \cS_{u,h}$ it holds that $\E[\Ind((i,j) \in \cE' ~|~ \cF_{u, h-1}] = \frac{1}{|\cS_{u,h-1}|}$, so that 
\begin{align*}
\Var[X_i ~|~ \cF_{u, h-1}] & \leq  \frac{\lambda_k^{2(s+\ell-h)}}{|\cS_{u,h}|^2} \frac{\|N_{u,h-1}\|_2^2}{|\cS_{u,h-1}|} 
\stackrel{(a)}{\leq} \frac{\lambda_k^{2(s+\ell-h)}}{|\cS_{u,h}|^2}
\end{align*}
where (a) follows from the assumption that $N_{u,h-1}$ has sparsity $\cS_{u,h-1}$ and has entries bounded in $[0,1]$.
It follows that $X_i$ conditioned on $\cF_{u,h-1}$ is sub-exponential with parameters 
\[\left(\frac{\lambda_k^{(s+\ell-h)}}{|\cS_{u,h}|}, \frac{B (1 + |\lambda_k|) |\lambda_k|^{s+\ell-h}}{|\cS_{u,h}|}\right).\] 
Now $D_{u, h}$ is sum of such $X_i$ for $i \in \cS_{u, h}$ which are independent of each other conditioned on $\cF_{u,h-1}$. Therefore, it follows that conditioned on $\cF_{u, h-1}$, $D_{u, h}$ is sub-exponential with parameters 
\[
\Big(\frac{\lambda_k^{(s+\ell-h)}}{\sqrt{|\cS_{u,h}|}}, \frac{B (1 + |\lambda_k|) |\lambda_k|^{s+\ell-h}}{|\cS_{u,h}|}\Big).
\]
Since $\{ (D_{u,h}, \cF_{u,h}): s+1\leq h \leq s+\ell \}$ is a martingale difference sequence, 
$\sum_{h=s+1}^{s+\ell} D_{u,h}$ conditioned on $\cF_{u,s}$ is sub-exponential with parameters
\[
\Bigg(\sqrt{\sum_{h=s+1}^{s+\ell} \frac{\lambda_k^{2(s+\ell-h)}}{|\cS_{u,h}|}}, \max_{h \in [s+1,s+\ell]} \frac{B (1 + |\lambda_k|) |\lambda_k|^{s+\ell-h}}{|\cS_{u,h}|}\Bigg).
\] 
Under event $\cap_{h=1}^{s+\ell} \cA^1_{u,h}(\delta)$, for any realization of the breadth-first-search tree of $u$, 
$|\cS_{u,h}| \in [((1 - \delta) n p/4)^h, ((1 + \delta) n p/4)^h]$ for all $h \in [s+\ell]$. Therefore, we can bound 
the sub-exponential parameters of $\sum_{h=s+1}^{s+\ell} D_{u,h}$ conditioned on $\cF_{u,s}$ using
the property $p = \omega(1/n)$ or $np = \omega(1)$ as 
\[
\Bigg(\lambda_k^{\ell-1} \sqrt{2} \left(\frac{(1 - \delta) n p}{4}\right)^{-(s+1)/2}, B (1 + |\lambda_k|) |\lambda_k|^{\ell-1}\left(\frac{(1 - \delta) n p}{4}\right)^{-(s+1)}\Bigg).
\]
By Azuma's concentration inequality, for $0 < x < \frac{2 ((1-\delta) np/4)^{(s+1)/2}}{B |\lambda_k|(1+|\lambda_k|)}$,
\begin{align*}
&\Prob{|e_k^T Q \tN_{u,s+\ell} - e_k^T \Lambda^{\ell} Q \tN_{u,s}| \geq \lambda_k^{\ell} ((1-\delta) np/4)^{-(s+1)/2} x ~|~ \cap_{h=1}^{s+\ell} \cA^1_{u,h}(\delta), \cF_{u,s}} \\
&\leq 2 \exp\left(- \min \left(\frac{x^2 \lambda_k^2}{4 }, \frac{x |\lambda_k| ((1 - \delta) n p/4)^{(s+1)/2}}{2 B (1 + |\lambda_k|)}\right) \right) \\
&\leq 2 \exp\left(- \frac{x^2 \lambda_k^2}{4 }\right).
\end{align*}
This completes the proof of Lemma \ref{lemma:nhbrhd_vectors}.
 \hfill \Halmos

Lemma \ref{lemma:nhbrhd_vectors} suggests the following high probability events: for any $u \in [n], k \in [r], x > 0, s \geq 0, \ell \geq 1$, $\delta \in (0,1)$, 
define 
\[
\cA^2_{u,k,s,\ell}(x, \delta) = \Big\{|e_k^T Q \tN_{u,s+\ell} - e_k^T \Lambda^{\ell} Q \tN_{u,s}| \leq \lambda_k^{\ell} ((1-\delta) np/4)^{-(s+1)/2} x\Big\}.
\]

\subsection{Concentration of a Quadratic Form Two}

We state a useful concentration that builds on Lemma \ref{lemma:nhbrhd_vectors} towards establishing 
Lemma \ref{lemma:dist_1}. 
\begin{lemma} \label{lemma:NFN_bound}
Let $\omega(\frac{1}{n}) \leq p \leq o(1)$, $\delta \in (0,1)$,  $s \geq 0, \ell\geq 1$ with $s+\ell \leq s^*(\delta, p, n)$,
and $x \leq B ((1-\delta) np/4)^{1/2}$. Consider any $u, v \in [n]$. Then, conditioned on event 
$\cap_{k=1}^r (\cA^2_{u,k,0,s}(x, \delta) \cap \cA^2_{v,k,0,s+\ell}(x, \delta))$, we have
\begin{align*}
\big|\tN_{u,s}^T F \tN_{v, s + \ell} - e_u^T Q^T \Lambda^{2s + \ell + 1} Q e_v \big| 
&\leq \frac{3 B x }{((1-\delta) np/4)^{1/2}} \Big(\sum_{k=1}^r |\lambda_k |^{2s+\ell+1} \Big).
\end{align*}
and
\begin{align*}
\big|\tN_{u,s}^T F \tN_{v, s + \ell} \big| 
\leq 4B^2 \Big(\sum_{k=1}^r |\lambda_k|^{2s+\ell+1} \Big).
\end{align*}
\end{lemma}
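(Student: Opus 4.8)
The plan is to reduce the statement to a short calculation in the $r$-dimensional eigen-coordinate space. Since $F = Q^T \Lambda Q$, writing $a_k := e_k^T Q \tN_{u,s}$ and $b_k := e_k^T Q \tN_{v,s+\ell}$ gives $\tN_{u,s}^T F \tN_{v,s+\ell} = (Q\tN_{u,s})^T \Lambda (Q\tN_{v,s+\ell}) = \sum_{k=1}^r \lambda_k a_k b_k$. The key structural observation is that $\tN_{u,0} = e_u$: the depth-$0$ boundary consists of the root $u$ alone, with empty-product path weight equal to $1$. Hence $e_k^T \Lambda^s Q \tN_{u,0} = \lambda_k^s q_k(\theta_u)$, and setting $\alpha_k := \lambda_k^s q_k(\theta_u)$, $\beta_k := \lambda_k^{s+\ell} q_k(\theta_v)$ we obtain the identity $e_u^T Q^T \Lambda^{2s+\ell+1} Q e_v = \sum_{k=1}^r \lambda_k^{2s+\ell+1} q_k(\theta_u) q_k(\theta_v) = \sum_{k=1}^r \lambda_k \alpha_k \beta_k$, which has exactly the form of the first sum with $(\alpha_k,\beta_k)$ replacing $(a_k,b_k)$. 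So both claims reduce to controlling $\sum_k \lambda_k(a_k b_k - \alpha_k \beta_k)$ and $\sum_k \lambda_k a_k b_k$.

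Next I would translate the conditioning events. By construction, $\cA^2_{u,k,0,s}(x,\delta)$ asserts $|a_k - \alpha_k| \leq |\lambda_k|^s \varepsilon$ and $\cA^2_{v,k,0,s+\ell}(x,\delta)$ asserts $|b_k - \beta_k| \leq |\lambda_k|^{s+\ell} \varepsilon$, where $\varepsilon := x\big((1-\delta)np/4\big)^{-1/2}$. The hypothesis $x \leq B\big((1-\delta)np/4\big)^{1/2}$ is precisely what makes $\varepsilon \leq B$. Combining this with $\sup_y |q_k(y)| \leq B$ yields the a priori bounds $|\alpha_k| \leq B|\lambda_k|^s$, $|\beta_k| \leq B|\lambda_k|^{s+\ell}$, and hence $|a_k| \leq (B+\varepsilon)|\lambda_k|^s \leq 2B|\lambda_k|^s$ and $|b_k| \leq 2B|\lambda_k|^{s+\ell}$.

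Finally, for the first inequality I would split each term as $a_k b_k - \alpha_k \beta_k = (a_k - \alpha_k) b_k + \alpha_k (b_k - \beta_k)$, which gives $|\lambda_k(a_k b_k - \alpha_k\beta_k)| \leq |\lambda_k|\big(|\lambda_k|^s\varepsilon \cdot 2B|\lambda_k|^{s+\ell} + B|\lambda_k|^s \cdot |\lambda_k|^{s+\ell}\varepsilon\big) = 3B\varepsilon\, |\lambda_k|^{2s+\ell+1}$; summing over $k \in [r]$ and substituting back $\varepsilon = x((1-\delta)np/4)^{-1/2}$ produces the claimed bound. For the second inequality, $|\tN_{u,s}^T F \tN_{v,s+\ell}| = |\sum_k \lambda_k a_k b_k| \leq \sum_k |\lambda_k|\cdot 2B|\lambda_k|^s \cdot 2B|\lambda_k|^{s+\ell} = 4B^2 \sum_k |\lambda_k|^{2s+\ell+1}$.

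There is no real obstacle here: the lemma is essentially bookkeeping once Lemma \ref{lemma:nhbrhd_vectors} is in hand. The only points deserving care are (i) verifying $\tN_{u,0} = e_u$ so that the ideal quantity $e_u^T Q^T \Lambda^{2s+\ell+1} Q e_v$ is literally $\sum_k \lambda_k \alpha_k \beta_k$, with the same $\lambda_k$-weighting as the empirical quantity, and (ii) invoking the hypothesis $x \leq B((1-\delta)np/4)^{1/2}$ to collapse $B + \varepsilon \leq 2B$ in the magnitude bounds.
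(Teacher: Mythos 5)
Your proof is correct and follows essentially the same route as the paper: expand in the eigenbasis, use the events $\cA^2_{u,k,0,s}$ and $\cA^2_{v,k,0,s+\ell}$ (with $\tN_{u,0}=e_u$, $\tN_{v,0}=e_v$) to bound the coordinatewise errors by $\varepsilon=x((1-\delta)np/4)^{-1/2}$, and invoke $\varepsilon\le B$ from the hypothesis on $x$. The only cosmetic differences are that you use the two-term splitting $a_kb_k-\alpha_k\beta_k=(a_k-\alpha_k)b_k+\alpha_k(b_k-\beta_k)$ in place of the paper's three-term splitting (both give $\varepsilon(2B+\varepsilon)\le 3B\varepsilon$), and you bound the second quantity directly rather than via triangle inequality from the first.
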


\paragraph{Proof of Lemma \ref{lemma:NFN_bound}.}
Assuming event $\cap_{k=1}^r (\cA^2_{u,k,0,s}(x, \delta) \cap \cA^2_{v,k,0,s+\ell}(x, \delta))$ holds,
and using the fact that $F = Q^T \Lambda Q$, it follows that
\begin{align}
&|\tN_{u,s}^T F \tN_{v, s + \ell} - e_u^T Q^T \Lambda^{2s + \ell + 1} Q e_v| \nonumber \\
&\leq |(\tN_{u,s}^T Q^T - e_u^T Q^T \Lambda^s) (\Lambda Q \tN_{v,s + \ell} - \Lambda^{s+\ell+1} Q e_v)| \nonumber \\
&\qquad + |(\tN_{u,s}^T Q^T - e_u^T Q^T \Lambda^s) \Lambda^{s+\ell+1} Q e_v|
+ |e_u^T Q^T \Lambda^{s + 1} (Q \tN_{v, s + \ell} - \Lambda^{s + \ell} Q e_v)| \nonumber\\
&\leq \Big|\sum_{k=1}^r (e_k ^T Q \tN_{u,s} - e_k^T \Lambda^s Q e_u) (e_k^T \Lambda Q \tN_{v,s + \ell} - e_k^T \Lambda^{s + \ell+1} Q e_v)\Big|\nonumber \\
&\qquad + \Big|\sum_{k=1}^r (e_k ^T Q \tN_{u,s} - e_k^T \Lambda^s Q e_u) e_k^T \Lambda^{s + \ell+1} Q e_v \Big| \nonumber\\
& \qquad  + \Big|\sum_{k=1}^r (e_k^T \Lambda^{s + 1} Q e_u) (e_k^T Q \tN_{v, s + \ell} - e_k^T \Lambda^{s + \ell} Q e_v)\Big| \nonumber\\
&\leq \frac{x}{((1-\delta) np/4)^{1/2}} \Bigg(\frac{x}{((1-\delta) np/4)^{1/2}} + 2B\Bigg)\Bigg(\sum_{k=1}^r |\lambda_k |^{2 s+\ell+1}\Bigg) \nonumber\\
& \leq \frac{3 B x}{((1-\delta) np/4)^{1/2}} \Bigg(\sum_{k=1}^r |\lambda_k |^{2s+\ell+1}\Bigg), \label{eq.NFN_bound.eq2}
\end{align}
where we have used the conditioned event $\cap_{k=1}^r (\cA^2_{u,k,0,s}(x) \cap \cA^2_{v,k,0,s+\ell}(x))$, the model assumption that $\|Q\|_\infty \leq B$, 
and the fact that $x \leq B ((1-\delta) np/4)^{1/2}$ for $n$ sufficiently large. 
%
\noindent From \eqref{eq.NFN_bound.eq2}, it follows that
\begin{align*}
&|\tN_{u,t}^T F \tN_{v,t + \ell}| \\
&\leq |e_u^T Q^T \Lambda^{2t + \ell+1} Q e_v | + |\tN_{u,t}^T F \tN_{v,t + \ell} - e_u^T Q^T \Lambda^{2t + \ell + 1} Q e_v| \\
&\leq (B^2 + 3B^2) \Big(\sum_{k=1}^r |\lambda_k|^{2t+\ell+1}\Big).
\end{align*}
 \hfill \Halmos

\subsection{Concentration of a Quadratic Form Three} 
We establish a final concentration that will lead us to the proof of good distance function property. 
\begin{lemma} \label{lemma:NMN_conc}
Let $\omega(\frac{1}{n}) \leq p \leq o(1)$, $\delta \in (0,1)$,  $s \geq 0, \ell\geq 1$ with $s+\ell \leq s^*(\delta, p, n)$ and 
$0 < x \leq B ((1-\delta) np/4)^{1/2}$. Let $u,v \in [n]$. 
Define event 
$$\Ap(u, v, s, \ell)(x) = \cap_{k=1}^r (\cA^2_{u,k,0,s}(x) \cap \cA^2_{v,k,0,s+\ell}(x)) \cap \cA^1_{u,s} \cap \cA^1_{v,s+\ell}.$$
For $0 < z \leq 4B^2 \sqrt{\Big(\sum_{k=1}^r |\lambda_k|^{2s+\ell+1}\Big) \times \pp ((1-\delta) np/4)^{2s + \ell}}$, conditioned on the event $\Ap(u, v, s, \ell)(x)$, with probability at least 
\[1 - 2\exp\left(- \frac{z^2 }{8 B^2} \right) - \exp\left(-\Theta\left( \pp \left(\frac{(1 - \delta) n p}{4|\lambda_r|^{-1}}\right)^{2s + \ell - \frac12} \right)\right),\]
it holds that  
\begin{align*}
|\frac{1}{\pp} \tN_{u,s} \big( \Mpp + \Mpind \big) \tN_{v,s + \ell} - \tN_{u,s} F \tN_{v,s + \ell}| 
&\leq \frac{|\lambda_r|^{2s}}{(pn)^{1/2}} + z \sqrt{\frac{\sum_{k=1}^r |\lambda_k|^{2s+\ell+1}}{\pp ((1-\delta) np/4)^{2s + \ell}}}.
\end{align*}
\end{lemma}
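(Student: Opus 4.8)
The plan is to condition on enough of the data to freeze the two normalized neighborhood boundaries $w:=\tN_{u,t}$ and $w':=\tN_{v,t+\ell}$, to observe that what remains is an essentially unbiased bilinear estimator of $w^TFw'$ built from a fresh $\mathrm{Bernoulli}(\pp)$ sample, and then to apply a Bernstein/Bennett-type tail bound whose variance proxy and range are furnished by the events packaged in $\Ap(u,v,t,\ell)(x)$. Concretely, one conditions on the edge set $\cEp$, on the breadth-first trees $\cT_u^{t}$ and $\cT_v^{t+\ell}$ together with their random tie-breaking, on the observed weights along the edges of these two trees, and on all latent variables $\{\theta_i\}$. Then $w$ and $w'$ are deterministic, supported on $\cS_{u,t}$ and $\cS_{v,t+\ell}$, with nonnegative entries, $\|w\|_1,\|w'\|_1\le1$, $\|w\|_\infty\le 1/|\cS_{u,t}|$ and $\|w'\|_\infty\le 1/|\cS_{v,t+\ell}|$. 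On $\Ap(u,v,t,\ell)(x)$ the events $\cA^1_{u,t}$ and $\cA^1_{v,t+\ell}$ give $|\cS_{u,t}|\ge((1-\delta)np/4)^{t}$ and $|\cS_{v,t+\ell}|\ge((1-\delta)np/4)^{t+\ell}$, while the $\cA^2$ events, the model bound $\|Q\|_\infty\le B$, and $x\le B((1-\delta)np/4)^{1/2}$ give $|e_k^TQw|\le 2B|\lambda_k|^{t}$ and $|e_k^TQw'|\le 2B|\lambda_k|^{t+\ell}$ for each $k\in[r]$, hence $|w^TFw'|\le 4B^2\sum_{k=1}^r|\lambda_k|^{2t+\ell+1}$ as in Lemma \ref{lemma:NFN_bound}.

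Next I would set $R:=\Mpp+\Mpind$, so $\frac1\pp w^TRw'=\frac1\pp\sum_{a\in\cS_{u,t},\,b\in\cS_{v,t+\ell}}w_aw'_b\,\xi_{ab}Z(a,b)$ with $\xi_{ab}:=\Ind((a,b)\in\cEpp\cup\cEpind)$. By the sample-splitting construction, conditionally on $\cEp$ the $\xi_{ab}$ are independent $\mathrm{Bernoulli}(\pp)$ (for $(a,b)\in\cEp$ the re-sampled set $\cEpind$ supplies fresh randomness), and for every pair $(a,b)$ in the support rectangle that is not an edge of $\cT_u^{t}$ or $\cT_v^{t+\ell}$ the value $Z(a,b)$ was not conditioned on and is independent of everything above, with $\E[Z(a,b)]=F(a,b)$, $\Var[Z(a,b)]\le1$ and $Z(a,b)\in[0,1]$. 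Consequently $\E[\frac1\pp w^TRw'\mid\cdot]=w^TFw'$ up to the contribution of the "collision" pairs (those that do lie in one of the two trees, which can occur only when the two neighborhoods overlap). I would then apply Bernstein to the mean-zero sum $\frac1\pp\sum w_aw'_b(\xi_{ab}Z(a,b)-\pp F(a,b))$ of conditionally independent terms: each term has magnitude at most $\frac1\pp\|w\|_\infty\|w'\|_\infty\le\frac1\pp((1-\delta)np/4)^{-(2t+\ell)}$, and the conditional variances sum to at most $\frac1\pp\|w\|_2^2\|w'\|_2^2$; bounding $\|w\|_2^2\le\|w\|_\infty\|w\|_1\le((1-\delta)np/4)^{-t}$ and similarly for $w'$, and feeding in the $\cA^2$-based bound on $w^TFw'$ to expose the factor $\sum_k|\lambda_k|^{2t+\ell+1}$, the quadratic (Gaussian) regime of Bernstein — valid exactly for $z$ in the range stated in the lemma — yields a deviation of at most $z\sqrt{\frac{\sum_{k}|\lambda_k|^{2t+\ell+1}}{\pp((1-\delta)np/4)^{2t+\ell}}}$ with probability at least $1-2\exp(-z^2/(8B^2))$.

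It then remains to absorb into the leftover additive term $\frac{|\lambda_r|^{2t}}{(pn)^{1/2}}$ two things: (i) the linear (sub-exponential) correction of Bernstein, which on the stated $z$-range is of lower order, and (ii) the contribution of the collision pairs. Both are handled by conditioning on complements of rare events — a Chernoff bound on $\sum_b\xi_{ab}$ (whose conditional mean is $\pp|\cS_{v,t+\ell}|$) and a bound on the number of vertices/edges shared by the two breadth-first trees — which fail only with probability $\exp(-\Theta(\pp((1-\delta)np/(4|\lambda_r|^{-1}))^{2t+\ell-1/2}))$; this is precisely where the exponent $2t+\ell-\tfrac12$ and the $|\lambda_r|$ factor enter. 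Finally, since every estimate was carried out conditionally on $\Ap(u,v,t,\ell)(x)$, undoing the conditioning introduced at the outset leaves the bounds intact and gives the lemma.

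The main obstacle is the dependency bookkeeping behind the unbiasedness: the conditioning $\sigma$-algebra must be rich enough to freeze $w$ and $w'$ and to impose the events $\cA^1$ and $\cA^2$ (which constrain $\cEp$, the tree weights, and the latent variables), yet coarse enough that the indicators $\xi_{ab}$ remain independent $\mathrm{Bernoulli}(\pp)$ and the noise values $Z(a,b)$ on the support rectangle remain fresh with mean $F(a,b)$; getting this balance right is exactly what makes $\frac1\pp w^TRw'$ unbiased for $w^TFw'$. A secondary difficulty is showing that the overlap of the depth-$t$ neighborhood of $u$ with the depth-$(t+\ell)$ neighborhood of $v$ is small enough that the collision pairs contribute at most $O(|\lambda_r|^{2t}(pn)^{-1/2})$ rather than dominating, and matching constants so that the two error terms come out in exactly the stated form.
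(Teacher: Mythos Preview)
Your outline matches the paper's proof in spirit: condition to freeze $w=\tN_{u,t}$ and $w'=\tN_{v,t+\ell}$, treat the bilinear form in $\Mpp+\Mpind$ as a sum of conditionally independent bounded terms, apply a Bernstein inequality, and handle separately the pairs whose $Z$-values were already used in building the trees. Two places deserve tightening. First, your variance proxy $\tfrac{1}{\pp}\|w\|_2^2\|w'\|_2^2$ does not by itself produce the factor $\sum_k|\lambda_k|^{2t+\ell+1}$; the paper instead bounds the per-term variance by $\pp\,N_{u,t}(i)N_{v,t+\ell}(j)F(i,j)$ (using $Z(a,b)^2\le Z(a,b)$ and $N_{u,t}(i)^2\le N_{u,t}(i)$), sums to get $\pp\,N_{u,t}^TFN_{v,t+\ell}$, and then invokes Lemma~\ref{lemma:NFN_bound} to obtain exactly $4\pp|\cS_{u,t}||\cS_{v,t+\ell}|B^2\sum_k|\lambda_k|^{2t+\ell+1}$---that is where the eigenvalue sum enters. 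Second, for the ``collision'' pairs the paper does not argue via neighborhood overlap at all: it simply observes that the tree edges of $\cT_u^t\cup\cT_v^{t+\ell}$ that can land in $\cS_{u,t}\times\cS_{v,t+\ell}$ number at most $|\cS_{u,t}|+|\cS_{v,t+\ell}|$ (only last-layer edges qualify), and each lies in $\cEpind$ independently with probability $\pp$; a single Chernoff bound on this Binomial count, together with the boundedness of the entries, gives the additive $|\lambda_r|^{2t}(pn)^{-1/2}$ term and the second exponential failure probability directly. With those two substitutions your argument is the paper's.
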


\paragraph{Proof of Lemma \ref{lemma:NMN_conc}.}
We establish this result by arguing that conditioned on the event $\Ap(u, v, s, \ell)(x)$, the matrix $\Mpp + \Mpind$ is statistically very similar to a freshly sampled dataset with density $\pp$. Recall that $\cEpind$ was constructed so that conditioned on $\cEp$, the set $\cEpind \cup \cEpp$ is distributed according to a Bernoulli($\pp$) sampling model, where each $(u,v) \in [n]^2$ are included in $\cEpind \cup \cEpp$ independently with probability $\pp$. The event $\Ap(u, v, s, \ell)(x)$ depends on $\cEp$ and the values $M(i,j)$ such that $(i,j) \in \cT^s_u\cup \cT^{s+\ell}_v$. Therefore datapoints $M(i,j) = Z(i,j)$ for tuples $(i,j) \notin \cT^s_u\cup \cT^{s+\ell}_v$ are independent from the event $\Ap(u, v, s, \ell)(x)$.

Let us define $\Mppind = [\Mppind(i,j)]$ where
\begin{align*}
\Mppind(i,j) = \begin{cases}
M(i,j) = Z(i,j) &\text{ if }(i,j) \in (\cEpp \cup \cEpind) \text{ and }(i,j) \notin \cT^s_u\cup \cT^{s+\ell}_v \\
Z_{\ind}(i,j) &\text{ if }(i,j) \in \cEpind \text{ and }(i,j) \in \cT^s_u\cup \cT^{s+\ell}_v
\end{cases},
\end{align*}
and $Z_{\ind}(i,j)$ is a freshly sampled observation for edge $(i,j)$, distributed equivalently to $Z(i,j)$. Conditioned on $\cEp$ 
and the event $\Ap(u, v, s, \ell)(x)$, $\Mppind$ has sparsity pattern $\cEpp \cup \cEpind$, which is distributed according to a 
Bernoulli$(\pp)$ sampling model where each $(i,j) \in [n]^2$ is included in $\cEpp \cup \cEpind$ with probability $\pp$. 
Furthermore, conditioned on $\Ap(u, v, s, \ell)$, for each $(i,j) \in \cEpp \cup \cEpind$ with probability $\pp$, the datapoint 
$\Mppind(i,j)$ is independent of all observations used to compute $\tN_{u,s}$ and $\tN_{v,s+\ell}$. As a result, 
$\Mppind(i,j)$ is a fresh independent signal of $F(i,j)$, distributed according to $Z(i,j)$.
First we will argue that 
\[\big(\tfrac{1}{\pp}\big) \tN_{u,s}^T (\Mpp + \Mpind) \tN_{v,s + 1} \approx \big(\tfrac{1}{\pp}\big) \tN_{u,s}^T \Mppind \tN_{v,s + 1}.\]
By construction, $\Mppind$ differs from $\Mpp + \Mpind$ only for indices $(i,j) \in \cEpind \cap (\cT^s_u \cup \cT^{s+\ell}_v)$. 
Therefore, it follows that
\begin{align*}
& |N_{u,s} \Mppind N_{v,s + \ell} - N_{u,s} \big( \Mpp + \Mpind \big) N_{v,s + \ell}| \\
& \leq \sum_{i,j} \Ind((i,j) \in \cEpind  \cap (\cT^s_u \cup \cT^{s+\ell}_v)) |Z_{\ind}(i,j) - Z(i,j)| N_{u,s}(i) N_{v, s+\ell}(j).
\end{align*}
By the boundedness assumption, $|Z_{\ind}(i,j) - Z(i,j)| \leq 1$. 
Furthermore, $N_{u,s}(i) N_{v,s+\ell}(j) \in [0,1]$ is only nonzero for $(i,j) \in \cS_{u,s} \times \cS_{v,s+\ell}$. Therefore,
\begin{align*}
& |N_{u,s} \Mppind N_{v,s + \ell} - N_{u,s} \big( \Mpp + \Mpind \big) N_{v,s + \ell}| \\
& \leq \sum_{i,j} \Ind((i,j) \in \cEpind  \cap (\cT^s_u \cup \cT^{s+\ell}_v)) \Ind((i,j) \in \cS_{u,s} \times \cS_{v,s+\ell}) \\
&= |\{(i,j) \in \cEpind \cap (\cT^s_u \cup \cT^{s+\ell}_v) \cap (\cS_{u,s} \times \cS_{v,s+\ell})\}| =: X.
\end{align*}
Conditioned on $\cEp$ and the event $\Ap(u, v, s, \ell)(x)$, the quantity above, denoted as $X$, is distributed as a Binomial random variable, where each pair $(i,j) \in (\cT^s_u \cup \cT^{s+\ell}_v) \cap (\cS_{u,s} \times \cS_{v,s+\ell})$ is included in the set $\cEpind$ independently with probability $\pp$.
The number of tuples in $(\cT^s_u \cup \cT^{s+\ell}_v) \cap (\cS_{u,s} \times \cS_{v,s+\ell})$ is bounded above by $|\cS_{u,s}| + |\cS_{v, s+\ell}|$, since the only edges in $\cT^s_u \cup \cT^{s+\ell}_v$ that intersect with $\cS_{u,s} \times \cS_{v,s+\ell}$ must be at the last layer of $\cT^s_u$ or $\cT^{s+\ell}_v$. 
By construction, the number of edges in tree $\cT^s_u$ at depth $s$ is equal to $|\cS_{u,s}|$. 
For sufficiently large $n$, by event $\Ap(u, v, s, \ell)(x)$, it follows that $|\cS_{u,s}| \leq |\cS_{v, s+\ell}|$. 
Therefore the random variable $X$ is stochastically dominated by a Binomial$(2|\cS_{v, s+\ell}|, \pp)$ random 
variable. For sufficiently large $n$, conditioned on $\cEp$ and the event $\Ap(u, v, s, \ell)(x)$, by Chernoff's bound,
\begin{align*}
&\Prob{X \geq \frac{\pp |\cS_{u,s}| |\cS_{v,s+\ell}|}{|\lambda_r|^{-2s}(pn)^{1/2}}} \\
&\leq \exp\left(-\frac13 \left(\frac{\pp |\cS_{u,s}| |\cS_{v,s+\ell}|}{|\lambda_r|^{-2s}(pn)^{1/2}} - 2 \pp |\cS_{v, s+\ell}| \right) \right) \\
&= \exp\left(-\frac23 \pp |\cS_{v, s+\ell}| \left(\frac{|\cS_{u,s}|}{2 |\lambda_r|^{-2s}(pn)^{1/2}} - 1  \right) \right) \\
&\leq \exp\left(-\frac23 \pp \left(\frac{(1 - \delta) n p}{4}\right)^{s + \ell} \left(\frac{(1-\delta)^{1/2}}{4|\lambda_r|^{-1}}\left(\frac{(1 - \delta) n p}{4|\lambda_r|^{-2}}\right)^{s-\frac12} - 1  \right) \right) \\
&= \exp\left(-\Theta\left( \pp \left(\frac{(1 - \delta) n p}{4|\lambda_r|^{-1}}\right)^{2s + \ell - \frac12} \right)\right)
\end{align*}
 It follows that conditioned on event $\Ap(u, v, s, \ell)(x)$, with probability at least 
 $1 - \exp\left(-\Theta\left( \pp \left(\frac{(1 - \delta) n p}{4|\lambda_r|^{-1}}\right)^{2s + \ell - \frac12} \right)\right)$, 
\begin{align}
|\frac{1}{\pp} \tN_{u,s} \Mppind \tN_{v, s + \ell} - \frac{1}{\pp} \tN_{u,s} \big( \Mpp + \Mpind \big) \tN_{v,s + \ell} | 
& \leq \frac{X}{\pp |\cS_{u,s}| |\cS_{v,s+\ell}|} \nonumber \\
&\leq \frac{|\lambda_r|^{2s}}{(pn)^{1/2}}. \label{eq:Mppind_diff_bd}
\end{align}
Next, we prove that with high probability,
\[\big(\tfrac{1}{\pp}\big) \big(\tN_{u,s} - \tN_{v,s}\big)^T \Mppind \big(\tN_{u,s + 1} - \tN_{v,s + 1}\big) \approx \tN_{u,s}^T F \tN_{v, s + \ell}.\]
Let $\cF(u, v, s, \ell, x)$ denote all the information related to $\cT_u^s$ and $\cT_v^{s+\ell}$, including the node latent 
parameters and observations in $\Mp$ that are associated to edges in $\cT^s_u \cup \cT^{s+\ell}_v$. Furthermore, let $\cF(u, v,s, \ell, x)$ be conditioned on the event that $\Ap(u, v, s, \ell)(x)$ holds, which is fully determined by the realization of edges and weights in 
$\cT_u^s$ and $\cT_v^{s+\ell}$. We establish concentration of $N_{u,s}^T \Mppind N_{v,s+\ell}$ by showing that the expression 
can be written as a sum of independent random variables conditioned on $\cF(u, v, s, \ell, x)$,
\begin{align*}
N_{u,s}^T \Mppind N_{v,s+\ell}
& = \sum_{i, j} \Ind((i,j) \in \cEpp \cup \cEpind) \Mppind(i,j) N_{u,s}(i) N_{v,s+\ell}(j),
\end{align*}
where each term of the summation is bounded in $[0,1]$ due to the fact that all observed entries are bounded in $[0,1]$. 
Let 
\begin{align*}
\phi(i,j) & = \Ind((i,j) \in \cEpp \cup \cEpind) \Mppind(i,j) N_{u,s}(i) N_{v, s+\ell}(j).
\end{align*} 
By construction, $\{\phi(i,j)\}_{(i,j) \in [n]^2}$ are independent random variables conditioned on $\cF(u, v, s, \ell, x)$, because $N_{u,s}$ and $N_{v,s+\ell}$ are measurable with respect to $\cF(u, v, s, \ell, x)$, and conditioned on $\cE'$, $\cEpp \cup \cEpind$ is distributed according to the Bernoulli$(\pp)$ sampling model, and the corresponding observations in $\Mppind$ are constructed to be independent due to resampling observations $Z_{\ind}(i,j)$ for $(i,j) \in \cT^s_u \cup \cT^{s+\ell}_v$.
We can verify that
\begin{align*}
& \E[\phi(i,j) | \cF(u, v, s, \ell, x) ]  = \pp F(i, j) N_{u,s}(i) N_{v,s+\ell}(j), \quad \text{and} \\
& \Var[\phi(i,j) | \cF(u, v, s, \ell, x)]  \\
& \quad = (N_{u,s}(i) N_{v,s+\ell}(j))^2 \E[\Ind((i,j) \in \cEpp \cup \cEpind) \Mppind(i,j)^2  ~|~ \cF(u, v, s, \ell, x) ] \\
& \quad \stackrel{(a)}{\leq} N_{u,s}(i) N_{v,s+\ell}(j) \E[\Ind((i,j) \in \cEpp \cup \cEpind) \Mppind(i,j)  ~|~ \cF(u, v, s, \ell, x) ] \\
&\quad \leq \pp N_{u,s}(i) N_{v, s+\ell}(j) F(i,j)
\end{align*}
where inequality $(a)$ follows from the assumption that observed entries are within $[0,1]$. Therefore, 
\begin{align}
\E[N_{u,s}^T \Mppind N_{v,s+\ell}  | \cF(u, v, s, \ell, x) ] 
&= \pp N_{u,s}^T F N_{v,s+\ell}, \label{eq:lem.mean.1}
\end{align}
and 
\begin{align}
\Var[N_{u,s}^T \Mppind N_{v,s+\ell}  | \cF(u, v, s, \ell, x)  ] 
& \leq \pp N_{u,s}^T F N_{v,s+\ell} \nonumber \\
& \leq 4 \pp |\cS_{u,s}| |\cS_{v,s+\ell}| B^2 \Big(\sum_{k=1}^r |\lambda_k|^{2s+\ell+1}\Big). \label{eq:lem.var.1}
\end{align}
The last inequality follows from Lemma \ref{lemma:NFN_bound}.
By an application of Bernstein's inequality, for $z \leq 
 4B^2 \Big(\sum_{k=1}^r |\lambda_k|^{2s+\ell+1}\Big)$, 
\begin{align*}
&\bP\left( \big|\frac{1}{\pp} \tN_{u,s} \Mppind \tN_{v,s + \ell} - \tN_{u,s} F \tN_{v,s + \ell}| > z ~|~\cF(u, v, s, \ell, x)  \right) \\
&= \bP\left( \left| N_{u,s}^T \Mppind N_{v,s+\ell} - \pp N_{u,s}^T F N_{v,s+\ell} \right| > \pp |\cS_{u,s}| |\cS_{v,s+\ell}| z ~|~\cF(u, v, s, \ell, x)  \right) \\
&\qquad \leq 2\exp\left(- \min\left(\frac{z^2 \pp |\cS_{u,s}| |\cS_{v,s+\ell}|}{8 B^2 \Big(\sum_{k=1}^r |\lambda_k|^{2s+\ell+1}\Big)}, \frac{z \pp |\cS_{u,s}| |\cS_{v,s+\ell}|}{2} \right)\right) \\
&\qquad\leq 2\exp\left(- \frac{\pp |\cS_{u,s}| |\cS_{v,s+\ell}| z^2}{8 B^2 \Big(\sum_{k=1}^r |\lambda_k|^{2s+\ell+1}\Big)| } \right). 
\end{align*}
Conditioned on the event $\Ap(u, v, s, \ell)(x)$, $|\cS_{u,s}|$ and $|\cS_{v,s+\ell}|$ are lower bounded by $((1-\delta)np/4)^s$ and $((1-\delta)np/4)^{s+\ell}$.
By reparametrizing $z \to z \sqrt{\frac{\sum_{k=1}^r |\lambda_k|^{2s+\ell+1}}{\pp ((1-\delta) np/4)^{2s + \ell}}}$, we conclude that  
\begin{align*}
&\bP\left( \left. \big|\frac{1}{\pp} \tN_{u,s} \Mppind \tN_{v,s + \ell} - \tN_{u,s} F \tN_{v,s + \ell}| > z \sqrt{\frac{\sum_{k=1}^r |\lambda_k|^{2s+\ell+1}}{\pp ((1-\delta) np/4)^{2s + \ell}}} ~\right|~\cF(u, v, s, \ell, x)  \right) \\
& \leq 2\exp\left(- \frac{z^2 }{8 B^2} \right),
\end{align*}
for $0 < z \leq 4 B^2 \sqrt{\Big(\sum_{k=1}^r |\lambda_k|^{2s+\ell+1}\Big) \times \pp ((1-\delta) np/4)^{2s + \ell}}$. 
The final step in the proof is to combine the above probability bound with the inequality stated in \eqref{eq:Mppind_diff_bd}.
 \hfill \Halmos

Define event 
\begin{align} 
\cA^3_{u, v, s, \ell}(z, \delta) & =  \Bigg\{\big|\frac{1}{\pp} \tN_{u,s} \big( \Mpp + \Mpind \big) \tN_{v,s+ \ell} - \tN_{u,s} F \tN_{v,s + \ell}| \leq \\
& \qquad \qquad\qquad \frac{|\lambda_r|^{2s}}{(pn)^{1/2}} + z \sqrt{\frac{\sum_{k=1}^r |\lambda_k|^{2s+\ell+1}}{\pp ((1-\delta) np/4)^{2s + \ell}}} \Bigg\}. \nonumber 
\end{align}


%
%
%

\subsection{Proof of Lemma \ref{lemma:dist_1}}
By statement of Lemma \ref{lemma:dist_1}, we have $t = \lfloor\frac{\ln(1/p)}{\ln(np)}\rfloor$ with $p = n^{-1 + \kappa}$ where
$1/\kappa$ is not an integer. We wish to establish that distance $\hat{d}$, as defined in \eqref{eq:dist1} is a good proxy of
distance $d$ as defined in \eqref{eq:ideal.dist1}. We shall establish this result under event $\cA$ where  
\begin{align} 
\cA & = \cA^1(0.1) \cap \cA^2(n^{\rho/2}, 0.1) \cap \cA^3(n^{\rho/2}, 0.1), \label{eq:lem1.events} 
\end{align}
where 
\begin{align*} 
\cA^3(n^{\rho/2}, 0.1) & = \cap_{u, v \in [n]} \cA^3_{u, v, t, 1}(n^{\rho/2}, 0.1), \\
\cA^2(n^{\rho/2}, 0.1) & =  \cap_{u\in[n]} \cap_{k \in [r]} (\cA^2_{u,k,0,t}(n^{\rho/2}, 0.1)  ~\cap~ \cA^2_{u,k,0,t+1}(n^{\rho/2}, 0.1)), \\
\cA^1(0.1) & = \cap_{u\in[n]} \cap_{s=1}^{t+1} \cA^1_{u,s}(0.1).
\end{align*}
We shall use Lemmas \ref{lemma:nbrhd_growth}, \ref{lemma:nhbrhd_vectors}, \ref{lemma:NFN_bound} and \ref{lemma:NMN_conc} to conclude
the desired result. To that end, we verify that appropriate conditions required in the statement of these Lemmas are satisfied. 

A crucial condition is that $t+1 \leq s^*(n, p, \delta)$ originally imposed by Lemma \ref{lemma:nbrhd_growth}. By definition of $s^*(n, p, \delta)$, it is sufficient to establish that 
\begin{align}
\frac{p}{8} \left(\frac{(1 + \delta) np}{4} \right)^{t} \leq \phi(\delta) \label{eq:phi_ineq}
\end{align}
where recall $\phi(\delta) = 1 - \left(\frac{1-\delta}{1-\delta\sqrt{2/3}}\right)^{1/2}$. We shall fix $\delta = 0.1$ for the convenience 
through the remainder of the proof. To that end, it can be checked that $\phi(0.1) > 0.01$. Therefore, it is sufficient to have 
\[t \leq  \frac{\ln(0.08 /p)}{\ln(0.275 np)} ~<~\frac{\ln(8 \phi(0.1)/p)}{\ln(0.275 np)}.\]
We have chosen $t =  \lfloor\frac{\ln(1/p)}{\ln(np)}\rfloor$. That is,
\begin{align*}
t & = \Big\lfloor \frac{(1-\kappa) \ln n}{\kappa \ln n} \Big\rfloor ~=~ \Big\lfloor \frac{(1-\kappa) }{\kappa} \Big\rfloor~<~\frac{1-\kappa}{\kappa},
\end{align*}
since $1/\kappa$ is not an integer. And, 
\begin{align*}
\frac{\ln(8 \phi(0.1)/p)}{\ln(0.275 np)} & \geq \frac{\ln 0.08 + (1-\kappa)\ln n}{\ln 0.275 + \kappa \ln n} ~\to~\frac{1-\kappa}{\kappa} \\
& >  \Big\lfloor \frac{(1-\kappa) }{\kappa} \Big\rfloor ~=~ t.
\end{align*}
for $n$ large enough. That is, for all $n$ large enough, $t+1 \leq s^*(n, p, 0.1)$. Since $1/\kappa$ is not an integer, 
for some $\gamma \in (0,1)$
\begin{align*}
t & = \Big\lfloor \frac{(1-\kappa) }{\kappa} \Big\rfloor ~=~ \frac{1-\kappa}{\kappa} - \gamma.
\end{align*}
That is, 
\begin{align}\label{eq:lem1.f0}
\kappa(t+2) - 1 & = \kappa ( 1- \gamma) > 0. 
\end{align}
For $\rho \in (0, \kappa)$, we use $x = n^{\rho/2}$ in statement of Lemmas \ref{lemma:nhbrhd_vectors}, \ref{lemma:NFN_bound} and \ref{lemma:NMN_conc}, and $z =  n^{\rho/2}$ in statement of Lemma \ref{lemma:NMN_conc}. We need to verify condition on $x$ and
$z$. Note that $\delta, B, |\lambda_k|, r, t$ are all constant with respect to $n$. Lemma \ref{lemma:nhbrhd_vectors} requires 
\[x < \frac{2 ((1-\delta) np/4)^{1/2}}{B |\lambda_k|(1+|\lambda_k|)} = \Theta((np)^{1/2})\]
and Lemma \ref{lemma:NFN_bound} requires 
\[x < B((1-\delta) np/4)^{1/2}  = \Theta((np)^{1/2}).\]
Since $np = n^\kappa$ and $x = n^{\rho/2}$ with $\rho < \kappa$, both of the above conditions are satisfied for sufficiently large $n$. For Lemma 
\ref{lemma:NMN_conc}, we require 
\[z < 4B^2 \big(\pp ((1-\delta) np/4)^{2t+1} \times (\sum_{k=1}^r |\lambda_k|^{2t+2})\big)^{1/2} \big) = \Theta((\pp (np)^{2t+1})^{1/2}).\]
Now $p' (np/4)^{2t+1} = \Theta(n^{2\kappa (t+1) - 1})$. By \eqref{eq:lem1.f0}, $2\kappa(t+1) - 1 = \kappa(t+2) - 1 + \kappa t > \kappa t \geq \kappa$. By choice, $z = n^{\rho/2}$ for $\rho < \kappa \leq 2\kappa(t+1) - 1$. Therefore, for sufficiently large $n$, the above condition is also satisfied.

Now we are ready to bound the difference between $d(u, v)$ and $\hat{d}(u,v)$ for any $u, v \in [n]$. Recall, 
\begin{align}\label{eq:lem1.f1}
d(\theta_u,\theta_v) & = \| \Lambda^{t+1} Q (e_u - e_v)\|^2 ~=~ (e_u - e_v)^T Q^T \Lambda^{2t+2} Q (e_u - e_v) \\
& = e_u^T Q^T \Lambda^{2t+2} Q e_u + e_v^T Q^T \Lambda^{2t+2} Q e_v -  e_u^T Q^T \Lambda^{2t+2} Q e_v -  e_v^T Q^T \Lambda^{2t+2} Q e_u. \nonumber
\end{align}
Recall, that according to \eqref{eq:dist1},
\begin{align}\label{eq:lem1.f2}
\hat{d}(u,v) & =  \big(\tfrac{1}{\pp}\big) \big(\tN_{u,t} - \tN_{v,t}\big)^T (\Mpp + \Mpind) \big(\tN_{u,t + 1} - \tN_{v,t + 1}\big), \\
& = \frac{1}{\pp} \tN_{u,t}^T (\Mpp + \Mpind) \tN_{u,t+1} + \frac{1}{\pp} \tN_{v,t}^T (\Mpp +  \Mpind )  \tN_{v,t+1}  \nonumber \\
& ~~~ - \frac{1}{\pp} \tN_{u,t}^T (\Mpp + \Mpind) \tN_{v,t+1}  - \frac{1}{\pp} \tN_{v,t}^T (\Mpp +  \Mpind) \tN_{u,t+1}. \nonumber
\end{align}
Under event $\cA$ as defined in \eqref{eq:lem1.events}, by Lemmas \ref{lemma:NFN_bound} and \ref{lemma:NMN_conc}, 
\begin{align}
& \big|\frac{1}{\pp} \tN_{u,t}^T (\Mpp + \Mpind) \tN_{u,t+1} - e_u^T Q^T \Lambda^{2t+2} Q e_u\big| \nonumber \\
&\leq \frac{3 B x }{((1-\delta) np/4)^{1/2}} \Big(\sum_{k=1}^r |\lambda_k |^{2t+2} \Big) + \frac{|\lambda_r|^{2t}}{(pn)^{1/2}} + z \sqrt{\frac{\sum_{k=1}^r |\lambda_k|^{2t+2}}{\pp ((1-\delta) np/4)^{2t + 1}}} \nonumber \\
& \leq \frac{3 B n^{\rho/2} }{(0.225 np)^{1/2}} \Big(\sum_{k=1}^r |\lambda_k |^{2t+2} \Big) 
+ \frac{|\lambda_r|^{2t}}{(pn)^{1/2}} + n^{\rho/2} \sqrt{\frac{\sum_{k=1}^r |\lambda_k|^{2t+2}}{\pp (0.225 np)^{2t + 1}}} \nonumber \\
& = O(B r |\lambda_1|^{2t+2} n^{- (\kappa - \rho)/2}) + O(|\lambda_r|^{2t} n^{-\kappa/2}) + O((r |\lambda_1|^{2t+2})^{1/2} n^{-(2\kappa(t+1) - 1 - \rho)/2}) \nonumber \\
& = O\Big(B r |\lambda_1|^{2t+2} n^{-\frac12 (\kappa - \rho)}\Big), \nonumber
\end{align}
where the last equality follows from observing that the first term asymptotically dominates with respect to $n$ as $\rho < \kappa \leq 2\kappa(t+1) - 1$. Similarly, all other three terms on the right hand side in \eqref{eq:lem1.f1} and \eqref{eq:lem1.f2} can be bounded by same quantities. 
Therefore, we conclude that for any $u, v \in [n]$
\begin{align}\label{final.claim.lem1}
\left| d(\theta_u,\theta_v) - \hat{d}(u,v) \right| & =  O\Big(B r |\lambda_1|^{2/\kappa} n^{-\frac12 (\kappa - \rho)}\Big),
\end{align}
where we used $t < \frac{1-\kappa}{\kappa}$.

To conclude the proof, we need to argue that event $\cA$ holds with high enough probability. To that end, through 
union bound and Lemmas \ref{lemma:nbrhd_growth}, \ref{lemma:nhbrhd_vectors}, and \ref{lemma:NMN_conc}, we have
\begin{align*}
\Prob{\neg\cA} & \leq \Prob{\neg\cA^3(n^{\rho/2}, 0.1) ~|~\cA^1(0.1) \cap \cA^2(n^{\rho/2}, 0.1)} + \\
& \quad \Prob{\neg\cA^2(n^{\rho/2}, 0.1) ~|~\cA^1(0.1)} + \Prob{\neg\cA^1(0.1)}.
\end{align*}
By union bound and Lemma \ref{lemma:NMN_conc}, we have that 
\begin{align*}
&\Prob{\neg\cA^3(n^{\rho/2}, 0.1) ~|~\cA^1(0.1) \cap \cA^2(n^{\rho/2}, 0.1)} \\
& \leq O\left(n^2 \exp\big(-\Theta(n^\rho)\big) + n^2 \exp\left(-\Theta\left( \pp \left(\frac{(1 - \delta) n p}{4|\lambda_r|^{-1}}\right)^{2t + \frac12} \right)\right)\right) \\
&\stackrel{(a)}{\leq} O\left(n^2 \exp\big(-\Theta(n^\rho)\big) + n^2 \exp\left(-\Theta\left( (n p)^{t - \frac12} \right)\right)\right) \\
&\leq O\left(n^2 \exp\big(-\Theta(n^\rho)\big) + n^2 \exp\left(-\Theta\left( n^{\kappa/2} \right)\right)\right).
\end{align*}
where the inequality $(a)$ follows from the choice of $t$, and the fact that $\delta$ and $t$ are constant with respect to $n$.
By union bound and Lemma \ref{lemma:nhbrhd_vectors}, we have that 
\begin{align*}
 \Prob{\neg\cA^2(n^{\rho/2}, 0.1) ~|~\cA^1(0.1)} & \leq O\big(n r \exp\big(-\Theta(n^\rho)\big)\big).
\end{align*}
By union bound and Lemma \ref{lemma:nbrhd_growth}, we have that 
\begin{align*}
 \Prob{\neg\cA^1(0.1)} & \leq O\big(n \exp\big(-\Theta(n^\kappa)\big)\big).
\end{align*}
In summary, \eqref{final.claim.lem1} holds with probability $1 - O\big(n^2 \exp\big(-\Theta(n^{\min(\rho, \kappa(t - \frac12))})\big)\big)$. 
This completes the proof of Lemma \ref{lemma:dist_1}. \hfill \Halmos

%
%
%
%
%
%
%

\subsection{Concentration in The Sparser Regime} We state consequence of earlier results that will help establish
Lemma \ref{lemma:dist_2}. 
\begin{lemma}\label{lemma:zNMN_bound}
Fix $\delta = 0.1$, $p = n^{-1} \ln^{1+\kappa} n$ for some $\kappa > 0$. Let
$$ t = \Bigg\lceil \frac{\ln (0.08/p) }{\ln (0.275 np)} - r'  \Bigg \rceil.$$
Let $\rho \in (0, \kappa)$. Suppose the events, 
$\cap_{k=1}^r (\cA^2_{u,k,0,t}(\ln^{(1+\rho)/2}(n), \delta) \cap \cA^2_{v,k,0,t}(\ln^{(1+\rho)/2}(n), \delta))$,
$\cap_{k \in [r]} \cap_{\ell=1}^{r'} \cA^2_{v,k,t,\ell}(\ln^{(1+\rho)/2}(n), \delta)$,  
$\cap_{\ell=1}^{r'} \cA^3_{u,v,t, \ell}(\ln^{(1+\rho)/2}(n), \delta)$
and $\cap_{s=1}^{t+r'} \big(\cA^1_{u, s}(\delta) \cap \cA^1_{v, s}(\delta)\big)$ hold. Then, 
\begin{align*}
\Big| \textstyle\sum_{\kp \in [r']} z_{\kp} \big(\tfrac{1}{\pp}\big) \tN_{u,t}^T (\Mpp +\Mpind) \tN_{v,t + \kp} - e_u^T Q^T \Lambda^{2} Q e_v \Big| 
&\leq c \ln^{-\frac{(\kappa - \rho)}{2}} n
\end{align*}
for some constant $c= c(\lambda_1, \lambda_r, \lambdag, r, B)$, independent of $n$ with $\lambdag = \min_{1\leq s < s' \leq r} |\lambda_s - \lambda_{s'}|$.
\end{lemma}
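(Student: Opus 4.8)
The plan is to treat the claimed inequality as a deterministic identity-plus-error statement valid on the intersection of the listed events; the only genuinely algebraic ingredient is the defining relation for $z$. Since $\Lambda^{2t+2}\tLambda z=\Lambda^2\bOne$ and every $\lambda_k\neq 0$, for each $k\in[r]$ we have $\sum_{\ell\in[r']}z_\ell\lambda_k^{\ell}=\lambda_k^{1-2t}$, equivalently $\sum_{\ell\in[r']}z_\ell\lambda_k^{2t+\ell+1}=\lambda_k^2$. Two size facts will be used repeatedly: (i) expressing $z$ via Lagrange interpolation through the $r'$ distinct eigenvalues gives $|z_\ell|\le c_0|\lambda_r|^{-2t}$ for a constant $c_0=c_0(\lambda_1,\lambdag,r)$, so $|z_\ell|\,|\lambda_r|^{2t}=O(1)$; and (ii) for $t=\lceil\ln(0.08/p)/\ln(0.275np)-r'\rceil$ and $np=\ln^{1+\kappa}n$ one has $t+r'\le s^*(0.1,p,n)$ (the same verification as in the proof of Lemma~\ref{lemma:dist_1}, using $\phi(0.1)>0.01$) and $\pp((1-\delta)np/4)^{2t+\ell}=n^{1-o(1)}$ for every $\ell\in[r']$. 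Because $x=z_{\mathrm{dev}}=\ln^{(1+\rho)/2}n=n^{o(1)}$ is well below the $\Theta((np)^{1/2})$ thresholds appearing in Lemmas~\ref{lemma:nhbrhd_vectors}, \ref{lemma:NFN_bound} and \ref{lemma:NMN_conc}, those lemmas apply in the stated range.

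First I would pass from the empirical bilinear forms to the population ones. By the events $\cA^3_{u,v,t,\ell}$, $\ell\in[r']$ (equivalently, the conclusion of Lemma~\ref{lemma:NMN_conc}),
\[
\Big|\tfrac1\pp\tN_{u,t}^T(\Mpp+\Mpind)\tN_{v,t+\ell}-\tN_{u,t}^T F\,\tN_{v,t+\ell}\Big|\le\frac{|\lambda_r|^{2t}}{(pn)^{1/2}}+z_{\mathrm{dev}}\sqrt{\frac{\sum_{k}|\lambda_k|^{2t+\ell+1}}{\pp((1-\delta)np/4)^{2t+\ell}}}.
\]
Multiplying by $|z_\ell|$ and summing over $\ell\in[r']$, the first summand contributes $O((pn)^{-1/2})=O(\ln^{-(1+\kappa)/2}n)$ by fact (i), and the second contributes — using $|z_\ell|\sqrt{\sum_k|\lambda_k|^{2t+\ell+1}}=O(|\lambda_r|^{-2t}|\lambda_1|^{t+O(1)})=n^{o(1)}$ and fact (ii) — at most $n^{o(1)}\cdot n^{-1/2+o(1)}=n^{-1/2+o(1)}$. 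Both are $o(\ln^{-(\kappa-\rho)/2}n)$, so it suffices to analyze $\sum_{\ell\in[r']}z_\ell\,\tN_{u,t}^T F\,\tN_{v,t+\ell}$.

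Next I would evaluate this sum exactly up to the $\cA^2$ errors. Writing $F=Q^T\Lambda Q$ gives $\tN_{u,t}^T F\,\tN_{v,t+\ell}=(Q\tN_{u,t})^T\Lambda(Q\tN_{v,t+\ell})$. By $\cA^2_{u,k,0,t}$, $e_k^TQ\tN_{u,t}=\lambda_k^t(Qe_u)_k+a_k$ with $|a_k|\le|\lambda_k|^t((1-\delta)np/4)^{-1/2}x$; by $\cA^2_{v,k,0,t}$ together with $\cA^2_{v,k,t,\ell}$, $e_k^TQ\tN_{v,t+\ell}=\lambda_k^{t+\ell}(Qe_v)_k+\lambda_k^\ell c_k+b_{k,\ell}$ with $|c_k|\le|\lambda_k|^t((1-\delta)np/4)^{-1/2}x$ and $|b_{k,\ell}|\le|\lambda_k|^\ell((1-\delta)np/4)^{-(t+1)/2}x$. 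Summing $z_\ell$ times the product over $k$ and $\ell$ and using $\sum_\ell z_\ell\lambda_k^\ell=\lambda_k^{1-2t}$, the leading term is $\sum_k\lambda_k\cdot\lambda_k^t(Qe_u)_k\cdot\lambda_k^t(Qe_v)_k\cdot\lambda_k^{1-2t}=\sum_k\lambda_k^2(Qe_u)_k(Qe_v)_k=e_u^TQ^T\Lambda^2Qe_v$, exactly the target. The cross terms have the shapes $\lambda_k^{2-t}(Qe_u)_kc_k$, $\lambda_k^{2-t}a_k(Qe_v)_k$, $\lambda_k^{2-2t}a_kc_k$, $\lambda_k^{t+1}(Qe_u)_k\sum_\ell z_\ell b_{k,\ell}$ and $\lambda_k a_k\sum_\ell z_\ell b_{k,\ell}$. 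In the first two, the $|\lambda_k|^t$ bound on $a_k$ or $c_k$ cancels the $\lambda_k^{-t}$ in $\lambda_k^{2-t}$, leaving $O(|\lambda_k|^2B(np)^{-1/2}x)=O(\ln^{-(\kappa-\rho)/2}n)$; the third is $O(|\lambda_k|^2(np)^{-1}x^2)=O(\ln^{\rho-\kappa}n)$; and in the last two the factor $(|\lambda_k|^2/((1-\delta)np/4))^{(t+1)/2}$ decays like $n^{-\Theta(1)}$ and swamps the $n^{o(1)}$ quantity $x\sum_\ell|z_\ell||\lambda_k|^\ell$. Summing the $O(r)$ cross terms gives $O(\ln^{-(\kappa-\rho)/2}n)$ with a constant depending only on $\lambda_1,\lambda_r,\lambdag,r,B$; combined with the previous step this proves the lemma.

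I expect the bookkeeping just sketched to be the main obstacle. Since $t=\Theta(\ln n/\ln\ln n)$, the quantities $|\lambda_1|^{\Theta(t)}$, $|\lambda_r|^{-\Theta(t)}$, and hence $|z_\ell|$, are of order $\exp(\Theta(\ln n/\ln\ln n))=n^{o(1)}$, which outgrows every fixed power of $\ln n$; a naive triangle inequality pairing $|z_\ell|$ with an $\Omega(1)$ or merely polylogarithmically small quantity would destroy the claimed bound. The argument must therefore be arranged so that every error term either carries the truly polynomial gain $(\pp(np)^{2t})^{-1/2}=n^{-1/2+o(1)}$ coming from the Bernstein/martingale estimate in Lemma~\ref{lemma:NMN_conc}, or exhibits the exact cancellation $\lambda_k^{2t+1}\sum_\ell z_\ell\lambda_k^\ell=\lambda_k^2$, which collapses the prefactor to a constant times the benign scale $x/(np)^{1/2}=\ln^{-(\kappa-\rho)/2}n$. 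Tracking which of these two mechanisms controls each term, rather than bounding $|z_\ell|$ crudely, is the crux of the proof.
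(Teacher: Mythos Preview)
Your proposal is correct and follows essentially the same approach as the paper: pass from the empirical bilinear form to $\tN_{u,t}^T F\,\tN_{v,t+\ell}$ via $\cA^3$, then expand in the eigenbasis and use the defining identity $\sum_\ell z_\ell\lambda_k^{2t+\ell+1}=\lambda_k^2$ to collapse the leading term to $e_u^TQ^T\Lambda^2Qe_v$, controlling $|z_\ell|$ by the Vandermonde/Lagrange bound $|z_\ell|\le c_0|\lambda_r|^{-2t}$. The only organizational difference is that the paper inserts the intermediate quantity $\tN_{u,t}^TQ^T\Lambda^{\ell+1}Q\tN_{v,t}$ to split the error into three pieces (its \eqref{eq:term1}--\eqref{eq:term3}), whereas you expand $(Q\tN_{u,t})_k=\lambda_k^t(Qe_u)_k+a_k$ and $(Q\tN_{v,t+\ell})_k=\lambda_k^{t+\ell}(Qe_v)_k+\lambda_k^\ell c_k+b_{k,\ell}$ directly and enumerate the five cross terms; both lead to the same bounds. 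Two small remarks: the verification that $t+r'\le s^*(0.1,p,n)$ is not literally ``the same as in Lemma~\ref{lemma:dist_1}'' (the scaling of $p$ differs), though it is equally routine and is carried out in the paper at the start of this lemma's proof; and your fact~(ii) should be stated as a lower bound $\pp((1-\delta)np/4)^{2t+\ell}\ge n^{1-o(1)}$ (which is exactly the computation in the paper's \eqref{eq:p_np_t_bd}).
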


\paragraph{Proof of Lemma \ref{lemma:zNMN_bound}.}
By choice of $t$, we have that 
\begin{align}\label{eq.t.1}
\frac{\ln (0.08/p) }{\ln (0.275 np)} - r'  & \leq t ~ < \frac{\ln (0.08/p) }{\ln (0.275 np)} - r' + 1.
\end{align}
We would like to verify that $t+r' \leq s^*(\delta, p, n)$ for $\delta = 0.1$. By definition of $s^*(n, p, \delta)$, it is sufficient to establish that 
\begin{align*}
\frac18 p \left(\frac{(1 + \delta) np}{4} \right)^{t+r' -1} \leq \phi(\delta)
\end{align*}
where recall $\phi(\delta) = 1 - \left(\frac{1-\delta}{1-\delta\sqrt{2/3}}\right)^{1/2}$. For $\delta = 0.1$, it can be verified that
$\phi(0.1) > 0.01$. Therefore, it is sufficient to have 
\[t + r' - 1 \leq  \frac{\ln(0.08 /p)}{\ln(0.275 np)}, \]
which is implied by \eqref{eq.t.1}. 

For $p = n^{-1} \ln^{1+\kappa} n$, $\ln np = \ln \ln^{1+\kappa} n = (1+\kappa) \ln \ln n$. We choose $\rho \in (0, \kappa)$, which implies $\rho \in (0, \frac{\ln(np)}{\ln \ln n} - 1)$. Throughout the proof, we will denote $x = \ln^{(1+\rho)/2} n = \omega(1)$. It follows that for sufficiently large $n$,
\begin{align}\label{eq.x.1}
x^2 ((1-\delta) np/4)^{-1} & = 4 (1-\delta)^{-1} (\ln n)^{-(\kappa - \rho)} = o(1).
\end{align}

Next, we verify properties of $z$. Recall that $z$ is a vector that satisfies $\Lambda^{2 t + 2} \tLambda z = \Lambda^2 \bOne$. 
That is, for any $k \in [r]$, 
\begin{align}\label{eq.z.id}
\sum_{\kp \in [r']} z_{\kp} \lambda_k^{\kp -1} = \lambda_k^{-2t}.
\end{align}
Therefore, 
\begin{align}
\sum_{\kp \in [r']} z_{\kp}  e_u^T Q^T \Lambda^{2t+\kp+1} Q e_v = e_u^T Q^T \Lambda^{2} Q e_v.
\end{align}

Let $L$ be the $r' \times r'$ diagonal matrix containing only the distinct eigenvalues amongst $\{\lambda_k\}_{k \in [r]}$, such that $L_{hh}$ denotes the $h$-th distinct eigenvalue. Let $\tilde{L}$ denote the associated $r' \times r'$  Vandermonde matrix containing only the distinct eigenvalues, i.e. if $\tilde{L}_{ab}$ takes the value of the $a$-th distinct eigenvalue raised to the $(b-1)$-th power. Note that $\Lambda^{2 t + 2} \tLambda z = \Lambda^2 \bOne$ is satisfied whenever 
\[L^{2 t + 2} \tilde{L} z = L \bOne\]
is satisfied. 
Let us define a diagonal matrix $D$ with $D_{bb} = |\lambda_1|^{-(b-1)}$.
Therefore the explicit expression for $z$ is given by
\[z = D (\tilde{L} D)^{-1} L^{-2t} \bOne,\]
such that for $\ell \in [r']$,
\begin{align}\label{eq.z.id.1}
z_{\ell} &= \sum_{h \in [r']} |\lambda_1|^{-(h-1)} (\tilde{L} D)^{-1}_{\ell h} L_{hh}^{-2t}.
\end{align}


Theorem 1 of \cite{Gautschi62} provides bounds on the sum of entries of the inverse of a Vandermonde matrix. It states that for a $N \times N$ Vandermonde matrix $V$ such that $V_{ab} =\lambda_a^{b-1}$, if $V^{-1}$ denotes the inverse of $V$, then
\begin{align*}
\max_{j \in [N]} \sum_{i \in [N]} |(V^{-1})_{ij}|
&\leq \max_{j \in [N]} \prod_{i \neq j} \frac{1 + |\lambda_i|}{|\lambda_i - \lambda_j|}.
\end{align*}
Using this result, we obtain 
%
%
%
\begin{align}
\sum_{j \in [r']} \sum_{i \in [r']} |(\tilde{L} D)^{-1}_{ij}| 
&\leq \sum_{j \in [r']} \prod_{i \neq j} \left(\frac{1 + |L_{ii}|/|\lambda_1|}{|L_{ii} - L_{jj}|/|\lambda_1|}\right) \nonumber \\
&\leq r' \left(\frac{|\lambda_1| + |\lambda_1|}{\min_{i,j}|L_{ii} - L_{jj}|}\right)^{r'-1} \nonumber \\
&= r' \left(\frac{2|\lambda_1|}{\lambdag}\right)^{r'-1}, \label{eq:vandermonde_norm_bd}
\end{align}
where $\lambdag$ is the minimum gap between eigenvalues only amongst the distinct eigenvalues,
$$ \lambdag = \min_{i,j}|L_i - L_j| = \min_{i,j: \lambda_i \neq \lambda_j}|\lambda_i - \lambda_j|.$$ 

Our interest is in bounding 
\begin{align}
&| \textstyle\sum_{\kp \in [r']} z_{\kp} \big(\tfrac{1}{\pp}\big) \tN_{u,t}^T (\Mpp +\Mpind) \tN_{v,t + \kp} - e_u^T Q^T \Lambda^{2} Q e_v | \nonumber \\
&\leq |\sum_{\kp \in [r']} z_{\kp} \left(\big(\tfrac{1}{\pp}\big) \tN_{u,t}^T (\Mpp +\Mpind) \tN_{v,t + \kp} - \tN_{u,t}^T F \tN_{v,t + \kp} \right) | \label{eq:term1} \\
&+ |\sum_{\kp \in [r']} z_{\kp} \left(\tN_{u,t}^T Q^T \Lambda Q \tN_{v,t + \kp} - \tN_{u,t}^T Q^T \Lambda^{\kp+1} Q \tN_{v,t} \right)| \label{eq:term2} \\
&+ |\sum_{\kp \in [r']} z_{\kp} \left(\tN_{u,t}^T Q^T \Lambda^{\kp+1} Q \tN_{v,t} - e_u^T Q^T \Lambda^{2t+\kp+1} Q e_v \right) | \label{eq:term3} 
\end{align}
Conditioned on events $\cap_{k=1}^r (\cA^2_{u,k,0,t}(x, \delta) \cap \cA^2_{v,k,0,t}(x, \delta))$ and given
that all conditions of Lemma \ref{lemma:NFN_bound} are satisfied, it follows that
\begin{align*}
|\eqref{eq:term3}|
&=  \Bigg| \sum_{k\in[r]} \lambda_k^2 \left((e_k^T Q \tN_{u,t}) (e_k^T Q \tN_{v,t}) - (e_k^T \Lambda^{t} Q e_u) (e_k^T \Lambda^{t} Q e_v) \right) \Big(\sum_{\kp \in [r']} z_{\kp} \lambda_k^{\kp-1}\Big)   \Bigg| \\
& \stackrel{(a)}{=}  \Bigg| \sum_{k\in[r]} \lambda_k^{2-2t} \left(e_k^T Q \tN_{u,t} - e_k^T \Lambda^{t} Q e_u \right) \left(e_k^T Q \tN_{v,t} -  e_k^T \Lambda^{t} Q e_v + e_k^T \Lambda^{t} Q e_v \right)  \Bigg|\\
&\qquad + \sum_{k\in[r]} \lambda_k^{2-2t}  e_k^T \Lambda^{t} Q e_u \left( e_k^T Q \tN_{v,t} - e_k^T \Lambda^{t} Q e_v \right) \\
&\leq \sum_{k\in[r]} |\lambda_k|^{2-2t} \left(|\lambda_k|^{2t} x^2 \left(\frac{(1-\delta) np}{4}\right)^{-1} + 2 B |\lambda_k|^{2t} x \left(\frac{(1-\delta) np}{4}\right)^{-1/2} \right) \\
&\leq  x \left(\frac{(1-\delta) np}{4}\right)^{-1/2} \left(x \left(\frac{(1-\delta) np}{4}\right)^{-1/2} + 2 B \right) \sum_{k\in[r]} |\lambda_k|^{2},
\end{align*}
where (a) follows from \eqref{eq.z.id}. 

Similarly, conditioned on events 
$\cap_{k=1}^r \cap_{\ell=1}^{r'}  (\cA^2_{u,k,t,\ell}(x, \delta) \cap \cA^2_{v,k,t,\ell}(x, \delta))$ with $x = \ln^{(1+\rho)/2} n$ and $\delta = 0.1$, we have
\begin{align*}
|\eqref{eq:term2}|
&\leq \sum_{\kp \in [r']} z_{\kp} \Bigg|\sum_k \lambda_k (e_k^T Q \tN_{u,t}) \left(e_k^T Q \tN_{v,t + \kp} - e_k^T \Lambda^{\kp} Q \tN_{v,t}  \right)\Bigg| \\
&\stackrel{(a)}{\leq} \sum_{\kp \in [r']} z_{\kp} B x \left(\frac{(1-\delta) np}{4}\right)^{-(t+1)/2} \sum_{k\in[r]} |\lambda_k|^{\kp+1} \\
&\stackrel{(b)}{=} \sum_{\kp \in [r']} \sum_{h \in [r']}  |\lambda_1|^{-\kp+1} (\tilde{L} D)^{-1}_{\kp h} L_{hh}^{-2t} B x \left(\frac{(1-\delta) np}{4}\right)^{-(t+1)/2} \big(\sum_{k\in[r]} |\lambda_k|^{\kp+1}\big) \\
&\stackrel{(c)}{\leq} |\lambda_1|^2 |\lambda_r|^2 B r x \left(\frac{(1-\delta) |\lambda_r|^{4} np}{4}\right)^{-(t+1)/2} \Big(\sum_{\kp \in [r']} \sum_{h \in [r']} (\tilde{L} D)^{-1}_{\kp h} \Big) \\
&\stackrel{(d)}{\leq} |\lambda_1|^2 |\lambda_r|^2 B r x \left(\frac{(1-\delta) |\lambda_r|^{4} np}{4}\right)^{-(t+1)/2} r' \left(\frac{2|\lambda_1|}{\lambdag}\right)^{r'-1},
\end{align*}
where (a) follows from events $\cap_{k=1}^r \cap_{\ell=1}^{r'}  (\cA^2_{u,k,t,\ell}(x, \delta) \cap \cA^2_{v,k,t,\ell}(x, \delta))$ and showing that $e_k^T Q \tN_{u,t} \leq B$ due to the boundedness of $Q$ and $\|\tN_{u,t}\|_1 \leq 1$ by normalization; (b) follows from \eqref{eq.z.id.1}; (c) follows from $|\lambda_k| \leq |\lambda_1|$ and $|L_{hh}^{-1}| \leq |\lambda_r|^{-1}$;(d) follows from \eqref{eq:vandermonde_norm_bd}. 

Conditioned on the event $\cap_{\ell=1}^{r'} \cA^3_{u,v,t, \ell}(\ln^{(1+\rho)/2}(n), \delta)$ and Lemma \ref{lemma:NFN_bound}, $x = \ln^{(1+\rho)/}2 n$ and $\delta = 0.1$ it follows that
\begin{align*}
|\eqref{eq:term1}| 
&\leq \sum_{\kp \in [r']} z_{\kp} \left(x \left(\frac{\sum_{k=1}^r |\lambda_k|^{2t+\kp+1}}{\pp ((1-\delta) np/4)^{2t+\kp}}\right)^{1/2} + \frac{|\lambda_r|^{2t}}{(pn)^{1/2}} \right) \\
&\stackrel{(a)}{\leq} \sum_{ \kp \in [r']} \sum_{h \in [r']} L_{hh}^{-2t} (\tilde{L} D)^{-1}_{\kp h} |\lambda_1|^{-\kp+1} \left(x\left(\frac{\sum_{k=1}^r |\lambda_k|^{2t+\kp+1}}{\pp ((1-\delta) np/4)^{2t+\kp}}\right)^{1/2} + \frac{|\lambda_r|^{2t}}{(pn)^{1/2}}\right) \\
&\leq |\lambda_r|^{-2t} \left(x\left(\frac{r |\lambda_1|^{2t+2}}{\pp ((1-\delta) np/4)^{2t+1}}\right)^{1/2} + \frac{\max(1, |\lambda_1|^{-r' + 1})}{(pn)^{1/2}}\right) \Big( \sum_{\kp \in [r']} \sum_{h \in [r']} (\tilde{L} D)^{-1}_{\kp h}\Big) \\
&\stackrel{(b)}{\leq}  \left(\left(\frac{x^2 r |\lambda_r|^2 |\lambda_1|}{\pp ((1-\delta) |\lambda_r|^2 |\lambda_1|^{-1} np/4)^{2t+1}}\right)^{1/2} + \frac{\max(1, |\lambda_1|^{-r' + 1})}{(pn)^{1/2}}\right) r' \left(\frac{2|\lambda_1|}{\lambdag}\right)^{r'-1}
\end{align*}
where (a) follows using \eqref{eq.z.id.1} as well as the fact that $np = \omega(1)$ and hence for $n$ sufficiently large, 
$((1-\delta)np/4 )^{-t} \geq ((1-\delta)np/4)^{-t-\kp}$ for any $\kp \geq 0$; 
(b) follows using \eqref{eq:vandermonde_norm_bd}.  
%

In summary, we conclude
\begin{align}
&| \textstyle\sum_{\kp \in [r']} z_{\kp} \big(\tfrac{1}{\pp}\big) \tN_{u,t}^T (\Mpp +\Mpind) \tN_{v,t + \kp} - e_u^T Q^T \Lambda^{2} Q e_v | \nonumber \\
&\leq \left(\left(\frac{x^2 r |\lambda_r|^2 |\lambda_1|}{\pp ((1-\delta) |\lambda_r|^2 |\lambda_1|^{-1} np/4)^{2t+1}}\right)^{1/2} + \frac{\max(1, |\lambda_1|^{-r' + 1})}{(pn)^{1/2}}\right) r' \left(\frac{2|\lambda_1|}{\lambdag}\right)^{r'-1} \label{eq.fin.1}\\
& + |\lambda_1|^2 |\lambda_r|^2 B r x \left(\frac{(1-\delta) |\lambda_r|^{4} np}{4}\right)^{-(t+1)/2} r' \left(\frac{2|\lambda_1|}{\lambdag}\right)^{r'-1} \label{eq.fin.2}\\
& + x \left(\frac{(1-\delta) np}{4}\right)^{-1/2} \left(x \left(\frac{(1-\delta) np}{4}\right)^{-1/2} + 2 B \right) \sum_{k\in[r]} |\lambda_k|^{2}. \label{eq.fin.3}
\end{align}
Observe that due to \eqref{eq.x.1}, $x ((1-\delta) np/4)^{-1/2} = o(1)$ and $t = \Theta(\ln n/ \ln \ln n) = \omega(1)$, hence there exists some constant $c_1 = c_1(\lambda_1, \lambda_r, \lambdag, r, B)$, 
independent of $n$, such that 
\begin{align}
& | {\sf term} \eqref{eq.fin.2} + {\sf term} \eqref{eq.fin.3}| + \frac{\max(1, |\lambda_1|^{-r' + 1})}{(pn)^{1/2}} r' \left(\frac{2|\lambda_1|}{\lambdag}\right)^{r'-1} \nonumber \\
& \qquad \leq c_1 x (np)^{-\frac12}. \label{eq.fin.4}
\end{align}

Recall that we chose $t$ such that by \eqref{eq.t.1},
\begin{align*}
\ln(\pp) &= \ln(p) - \ln(4-p) \\
&= \ln(0.08/(4-p)) - \ln(0.08/p) \\
&\geq \ln(0.08/(4-p)) - (t+r') \ln(0.275 np).
\end{align*}
It follows by $t = \Theta(\frac{\ln(1/p)}{\ln(np)}) = \Theta(\frac{\ln(n)}{\ln \ln n}) = \omega(1)$ that,
\begin{align}
&\ln (\pp ((1-\delta) |\lambda_r|^2 |\lambda_1|^{-1} np/4)^{2t}) \nonumber \\
&\geq \ln(0.08/(4-p)) - (t + r') \ln(0.275 np) + 2t (\ln(\frac{(1-\delta)|\lambda_r|^2}{4|\lambda_1|}) + \ln np) \nonumber \\
&= t \ln(np) + \ln(0.08/(4-p)) - r' \ln(0.275 np) + t \left(2 \ln(\frac{(1-\delta)|\lambda_r|^2}{4|\lambda_1|}) - \ln(0.275)\right) \nonumber \\
&= \Theta(t \ln(np)) = \Theta( \ln(n)) = \omega(1). \label{eq:p_np_t_bd}
\end{align}
This implies that for some constant $c_2 = c_2(\lambda_1, \lambda_r, \lambdag, r, B)$, the square of the first term in \eqref{eq.fin.1} satisfies
\begin{align}
\frac{x^2 r |\lambda_r|^2 |\lambda_1|}{\pp ((1-\delta) |\lambda_r|^2 |\lambda_1|^{-1} np/4)^{2t+1}} (r')^2 \left(\frac{2|\lambda_1|}{\lambdag}\right)^{2(r'-1)} &\leq c_2 x^2 (np)^{-1}. 
\end{align}

Putting everything together, we have that for some constant $c= c(\lambda_1, \lambda_r, \lambdag, r, B)$
\begin{align}
| \textstyle\sum_{\kp \in [r']} z_{\kp} \big(\tfrac{1}{\pp}\big) \tN_{u,t}^T (\Mpp +\Mp_{u,v,t,\kp})) \tN_{v,t + \kp} - e_u^T Q^T \Lambda^{2} Q e_v | &\leq  c x (np)^{-1/2}. 
\end{align}
Replacing $x = \ln^{(1+\rho)/2} n$, we obtain the desired result. 
 \hfill \Halmos

\subsection{Proof of Lemma \ref{lemma:dist_2}} 
The proof of Lemma \ref{lemma:dist_2} would follow from Lemma \ref{lemma:zNMN_bound} and once we verify the probability of events 
required to hold for Lemma \ref{lemma:zNMN_bound} to be applicable. To that end, given $\kappa > 0$
so that $p = n^{-1} \ln^{1+\kappa} n$, let $\rho \in (0, \kappa)$ be parameter of choice. We set 
$$ t = \Bigg\lceil \frac{\ln (0.08/p) }{\ln (0.275np)} - r'  \Bigg \rceil.$$
Define event $\cA$ where  
\begin{align} 
\cA & = \cA^1(0.1) \cap \cA^2(\ln^{(1+\rho)/2}(n), 0.1) \cap \cA^3(\ln^{(1+\rho)/2}(n), 0.1), \label{eq:lem2.events} 
\end{align}
where 
\begin{align*} 
\cA^3(\ln^{(1+\rho)/2}(n), 0.1) & = \cap_{u, v \in [n]} \cap_{\ell=1}^{r'} \cA^3_{u, v, t, \ell}(\ln^{(1+\rho)/2}(n), 0.1), \\
\cA^2(\ln^{(1+\rho)/2}(n), 0.1) & =  \cap_{u\in[n]} \cap_{k \in [r]} \cA^2_{u,k,0,t}(\ln^{(1+\rho)/2}(n), 0.1) \\
& \qquad \qquad \cap_{u\in[n]} \cap_{k \in [r]} \cap_{\ell=1}^{r'} \cA^2_{u,k,t,\ell}(\ln^{(1+\rho)/2}(n), 0.1), \\
\cA^1(0.1) & = \cap_{u\in[n]} \cap_{s=1}^{t+r'} \cA^1_{u,s}(0.1).
\end{align*}
We shall use Lemmas \ref{lemma:nbrhd_growth}, \ref{lemma:nhbrhd_vectors}, \ref{lemma:NFN_bound} and \ref{lemma:NMN_conc} to conclude
the desired result. To that end, we verify that appropriate conditions required in the statement of these Lemmas are satisfied. 

To argue that $\cA^1(0.1)$ holds with high probability, we wish to apply Lemmas \ref{lemma:nbrhd_growth} which requires verifying 
$t+r' \leq s^*(n, p, 0.1)$ which is done in proof of Lemma \ref{lemma:zNMN_bound}. To argue that $\cA^2(\ln^{(1+\rho)/2}(n), 0.1)$ 
and $\cA^3(\ln^{(1+\rho)/2}(n), 0.1)$ hold with high probability, we will utilize Lemmas \ref{lemma:nhbrhd_vectors}, \ref{lemma:NFN_bound} and 
\ref{lemma:NMN_conc} with $x = \ln^{(1+\rho)/2}(n)$ as well as $z = \ln^{(1+\rho)/2}(n)$ in statement of Lemma \ref{lemma:NMN_conc}. 
We need to verify condition on $x$ and $z$. Lemma \ref{lemma:nhbrhd_vectors} requires 
\[x \leq \frac{2 ((1-\delta) np/4)^{1/2}}{B |\lambda_k|(1+|\lambda_k|)}\]
and Lemma \ref{lemma:NFN_bound} requires 
\[x \leq B((1-\delta) np/4)^{1/2}.\]
For sufficiently large $n$ these conditions are satisfied by our choice of $x$ due to $\rho < \kappa$. For Lemma \ref{lemma:NMN_conc}, we require 
\[z  \leq 4 B^2 \big(\pp ((1-\delta) np/4)^{2t+\ell} \times (\sum_{k=1}^r |\lambda_k|^{2t+\ell + 1 })\big)^{1/2}.\]
Now $z = \ln^{(1+\rho)/2} n$ and $np = \ln^{1+\kappa} n$ and since $\rho < \kappa$ we have that $z = o((np)^{1/2})$.
By the same argument as \eqref{eq:p_np_t_bd} in the proof of Lemma \ref{lemma:zNMN_bound}, $\pp ((1-\delta) |\lambda_r| np/4)^{2t} = \omega(1)$. As a result, the right hand side of the inequality is $\omega((np)^{\ell/2})$, which implies that for sufficiently large $n$, the above condition on $z$ is satisfied. 

Conditioned on event $\cA$, by Lemma \ref{lemma:zNMN_bound} it follows immediately that for distances
defined as per \eqref{eq:ideal.dist2} and \eqref{eq:dist2}, 
\begin{align}
\max_{u, v \in [n]} | d(\theta_u,\theta_v) - \hat{d}(u,v) | & =  O\Big(\ln^{-\frac{\kappa - \rho}{2}} n\Big) ~=~O\Bigg(\sqrt{\frac{\ln^{1+\rho} n}{np}}\Bigg). 
\end{align}

To conclude the proof, we need to argue that event $\cA$ holds with high enough probability. To that end, through 
union bound and Lemmas \ref{lemma:nbrhd_growth}, \ref{lemma:nhbrhd_vectors}, and \ref{lemma:NMN_conc}, we have
\begin{align*}
\Prob{\neg\cA} & \leq \Prob{\neg\cA^3(\ln^{(1+\rho)/2}(n), 0.1) ~|~\cA^1(0.1) \cap \cA^2(\ln^{(1+\rho)/2}(n), 0.1)} + \\
& \quad \Prob{\neg\cA^2(\ln^{(1+\rho)/2}(n), 0.1) ~|~\cA^1(0.1)} + \Prob{\neg\cA^1(0.1)}.
\end{align*}
By union bound and Lemma \ref{lemma:NMN_conc}, we have that 
\begin{align}\label{eq:lem.fin.2.1}
&\Prob{\neg\cA^3(\ln^{(1+\rho)/2}(n), 0.1) ~|~\cA^1(0.1) \cap \cA^2(\ln^{(1+\rho)/2}(n), 0.1)} \\
& \leq O\Big(n^2 r' \exp\big(-\Theta(\ln^{1+\rho} n)\big)\Big)
+ O\Big(n^2 r' \exp\left(-\Theta\left( \pp \left(\frac{(1 - \delta) n p}{4|\lambda_r|^{-1}}\right)^{2t + \frac12} \right)\right)\Big).
\end{align}
By the choice of $t$ to satisfy \eqref{eq.t.1}, it follows that $p (0.275 np)^{t+r'} \geq 0.08$. Therefore,
\begin{align*}
\pp \left(\frac{(1 - \delta) n p}{4|\lambda_r|^{-1}}\right)^{2t + \ell - \frac12}
&\geq \frac{p}{4-p} (0.275 np)^{t+r'} \left(\frac{(1-\delta)}{1.1 |\lambda_r|^{-1}}\right)^{t+r'} \left(\frac{(1-\delta) np}{4 |\lambda_r|^{-1}}\right)^{t + \frac12 - r'} \\
&= \frac{0.08}{4-p} \left(\frac{(1-\delta)}{1.1 |\lambda_r|^{-1}}\right)^{2r'-\frac12} \left(\frac{(1-\delta)^2 np}{4.4 |\lambda_r|^{-2}}\right)^{t + \frac12 - r'} \\
&= \Theta\left(\left(\frac{(1-\delta)^2 np}{4.4 |\lambda_r|^{-2}}\right)^{t + \frac12 - r'}\right) \\
&= \Omega(np) = \Theta(\ln^{1+\kappa} n),
\end{align*}
where we used the fact that $\delta, |\lambda_r|, r'$ are all constants, while $t = \omega(1)$ and $np = \omega(1)$. 
By union bound and Lemma \ref{lemma:nhbrhd_vectors}, we have that 
\begin{align}\label{eq:lem.fin.2.2}
 \Prob{\neg\cA^2(\ln^{(1+\rho)/2}(n), 0.1) ~|~\cA^1(0.1)} & \leq O\Big(n r r' \exp\big(-\Theta(\ln^{1+\rho} n)\big)\Big).
\end{align}
By union bound and Lemma \ref{lemma:nbrhd_growth}, we have that 
\begin{align}\label{eq:lem.fin.2.3}
 \Prob{\neg\cA^1(0.1)} & \leq O\Big(n \exp\big(-\Theta(\ln^{1+\kappa} n)\big)\Big).
\end{align}
In summary, the desired claim holds with probability $1 - O\Big(n^2 \exp\big(-\Theta((\ln n)^{1+\rho})\big)\Big)$. 
This completes the proof of Lemma \ref{lemma:dist_2}. \hfill \Halmos

\section{Proving distance estimate is close when $f$ has $\varepsilon$-approximate rank $r$} \label{sec:distance_estimate_approx_proof}


In this section, we extend the result that distance estimate \eqref{eq:dist1} is good approximation 
of the desired ideal distance as claimed in the statement of Lemma \ref{lemma:dist_1_approx} when
$f$ has $\varepsilon$-approximate rank $r$. We will primarily establish robustness of the distance 
estimate with respect to arbitrary, additional error of magnitude at most $\varepsilon$ in each observed
entry. This will help  conclude Lemma \ref{lemma:dist_1_approx} from Lemma \ref{lemma:dist_1}. 

\subsection{Robustness of The Quadratic Form In \eqref{eq:dist1}}

When $f$ has $\eps$-approximate rank $r$, the $F = Q^T \Lambda Q + \beps$ with $\|\beps\|_{\max} \leq \eps$. In contrast, when $f$ has rank $r$, $\beps = 0$, i.e. $F = Q^T \Lambda Q$. That is, the setting of $f$ has $\eps$-approximate rank $r$ can be viewed as a perturbation of the setting with $f$ having rank $r$: each observation 
$M(i,j)$ is first generated as per rank $r$ setting and then arbitrary perturbation or adversarial noise $\eps_{ij}$ is added to it where
$|\eps_{ij}| \leq \eps$. Therefore, we shall analyze the distance estimate as defined in \eqref{eq:dist1} for
the setting of $f$ that has  $\eps$-approximate rank $r$ by bounding the perturbation (or change) 
induced in distance estimates for the setting of $f$ that is rank $r$, due to the addition of such an 
arbitrary perturbation  $\eps_{ij}$. 
\begin{lemma}\label{lem:robust}
Let $f$ have rank $r$, $\omega(\frac{1}{n}) \leq p \leq o(1)$, $\delta \in (0,1)$,  $t \geq 0$ with 
$t+1 \leq s^*(\delta, p, n)$ and $0 < x \leq B ((1-\delta) np/4)^{1/2}$. Let $u,v \in [n]$. 
As before, define event 
$$\Ap(u, v, t, 1)(x) = \cap_{k=1}^r (\cA^2_{u,k,0,t}(x) \cap \cA^2_{v,k,0,t+1}(x)) \cap \cA^1_{u,t} \cap \cA^1_{v,t+1}. $$
We condition on the event that $\Ap(u, v, t, 1)(x)$ holds. Let $\hat{d}(u,v)$ be the distance estimate computed according to \eqref{eq:dist1}. Upon adding arbitrary
$\eps_{ij} \in [-\eps, \eps]$ to $M(i,j)$ for each $(i,j) \in \cE$, with probability at least 
\[1 - \exp\left(-\Theta\left( \pp \left(\frac{(1 - \delta) n p}{4}\right)^{2t + 1} \right)\right),\]
$\hat{d}(u,v)$ changes at most by 
$O(t \eps (1+\eps)^t + t^2 \eps^2 (1+\eps)^{2t-1})$.
\end{lemma}
\paragraph{Proof of Lemma \ref{lem:robust}.} Recall that $\hat{d}(u,v)$ is the sum of four quadratic terms (see \eqref{eq:lem1.f2} for example).  For each of these terms, we shall argue that it changes by 
$O(\eps + t \eps (1+\eps)^t + t^2 \eps^2 (1+\eps)^{2t-1})$ with high probability as claimed. This will conclude the proof. To that end, let us start by considering $\frac{1}{\pp} \tN_{u,t}^T \barM \tN_{u,t + 1}$
where $\barM =\Mpp + \Mpind$; others follow in a similar manner. Specifically, consider
\begin{align*}
N_{u,t}^T \bar{M} N_{v,t+1}
& = \sum_{i, j} \Ind((i,j) \in \cEpp \cup \cEpind) \barM(i,j) N_{u,t}(i) N_{v,t+1}(j).
\end{align*}
Let $\cF(u, v, t, 1, x)$ denote all the information related to 
$\cT_u^t$ and $\cT_v^{t+1}$, including the node latent parameters and observations in $\barM$ that are associated to edges in $\cT^t_u \cup \cT^{t+1}_v$. Furthermore, let $\cF(u, v,t, 1, x)$ be conditioned on the event that 
$\Ap(u, v, t, 1)(x)$ holds, which is fully determined by the realization of edges and weights in 
$\cT_u^t$ and $\cT_v^{t+1}$. We wish to understand how $N_{u,t}^T \bar{M} N_{v,t+1}$ changes if 
we perturb each entry $M(i,j)$ by adding arbitrary $\eps_{ij}$ so that $|\eps_{ij}| \leq \eps$ for all $(i,j) \in \cE$. 
To that end, define 
\begin{align*}
\phi(i,j) & = \Ind((i,j) \in \cEpp \cup \cEpind) \barM(i,j) N_{u,t}(i) N_{v, t+1}(j).
\end{align*} 
By construction, $\{\phi(i,j)\}_{(i,j) \in [n]^2}$ in non-zero only if all four terms in its product are. Given
$\cF(u, v,t, 1, x)$ and conditioned on $\cE'$, $\Ind((i,j) \in \cEpp \cup \cEpind)$ are i.i.d. Bernoulli$(\pp)$. 
Each $\barM(i,j)$ is perturbed at most by $\eps$. By definition $N_{u,t}(i)$ is a product $t$ terms, each of which 
takes value in $[0,1]$ and is perturbed by at most $\eps$ (in absolute) value. Let 
$N_{u,t}(i) = \prod_{s=1}^t w_s$ with $|w_s| \leq 1$ for all $s \leq t$. Let $\eps_s$ be perturbation added
to $w_s$ with $|\eps_s|\leq \eps$ for all $s \leq t$. The change in $N_{u,t}(i)$ is bounded as 
\begin{align}
\big|\prod_{s=1}^t w_s - \prod_{s=1}^t (w_s + \eps_s) \big| & = \big|\sum_{S \subset [t]: S \neq \emptyset} 
\prod_{s \in S} \eps_s \prod_{s \in [t] \backslash S} w_s\big| 
~\leq \sum_{S \subset [t]: S \neq \emptyset} \prod_{s \in S} |\eps_s| \prod_{s \in [t] \backslash S} |w_s| \nonumber \\
& \leq \sum_{S \subset [t]: S \neq \emptyset} \eps^{|S|} 
~= \sum_{s=1}^t {t \choose s} \eps^s \nonumber \\
& =~\eps\Big( \sum_{s=0}^{t-1} \frac{t!}{(t-s-1)! (s+1)!} \eps^s\Big) \nonumber \\
& \leq t \eps \Big(\sum_{s=0}^{t-1}  \frac{(t-1)!}{((t-1)-s)! s!} \eps^s\Big)  ~=~t \eps \Big(\sum_{s=0}^{t-1}  {t-1 \choose s} \eps^s\Big) \nonumber \\
& = t \eps (1+\eps)^{t-1}. 
\end{align}
Similarly, the perturbation in $N_{v, t+1}(j)$ can be bounded above by $(t+1) \eps (1+\eps)^{t}$. That is, the overall perturbation in $\barM(i,j) N_{u,t}(i) N_{v, t+1}(j)$ is bounded above as 
$O(t \eps (1+\eps)^t + t^2 \eps^2 (1+\eps)^{2t-1})$ since each of the 
$\barM(i,j), N_{u,t}(i), N_{v, t+1}(j)$ are $O(1)$. Therefore, the overall perturbation in 
$N_{u,t}^T \bar{M} N_{v,t+1}$ is bounded above by $O(t \eps (1+\eps)^t + t^2 \eps^2 (1+\eps)^{2t-1})$ times the number of $(i,j)$ such that $N_{u,t}(i), N_{v,t+1}(j)$ are non-zero and $\Ind((i,j) \in \cEpp \cup \cEpind)$.
Given $\cF(u, v,t, 1, x)$, this is precisely Binomial($|\cS_{u,t}| |\cS_{v,t+1}|, \pp$). Therefore, by Chernoff's bound, 
it follows that $\sum_{i, j \in [n]} \Ind((i,j) \in \cEpp \cup \cEpind)$ is at most $2 |\cS_{u,t}| |\cS_{v,t+1}| \pp$
with probability at least $1- \exp\big(-|\cS_{u,t}| |\cS_{v,t+1}| \pp/3\big)$. That is, perturbation in 
$N_{u,t}^T \bar{M} N_{v,t+1}$ is bounded above by $O(|\cS_{u,t}| |\cS_{v,t+1}| \pp) \times O( t \eps (1+\eps)^t + t^2 \eps^2 (1+\eps)^{2t-1})$ with probability at least 
$1- \exp\big(-|\cS_{u,t}| |\cS_{v,t+1}| \pp/3\big)$.  Conditioned on the event 
$\Ap(u, v, t, 1)(x)$, $|\cS_{v,s+\ell}|$ are lower bounded by $((1-\delta)np/4)^t$ and 
$((1-\delta)np/4)^{t+1}$. That is, the above claim holds with probability at least
$1 - \exp\big(-(1-\delta)np/4)^{2t+1} \pp/3\big)$. Recall that $\tN_{u,t} = N_{u,t}/|\cS_{u,t}|$. It follows that the perturbation in 
$\frac{1}{\pp} \tN_{u,t}^T \barM \tN_{u,t + 1}$ is bounded above by 
$O(t \eps (1+\eps)^t + t^2 \eps^2 (1+\eps)^{2t-1})$ with probability at least 
$1 - \exp\big(-(1-\delta)np/4)^{2t+1} \pp/3\big)$. Using an identical argument, the same
conclusion holds for perturbation induced in the other three terms in distance
estimate \eqref{eq:dist1}. This completes the proof of Lemma \ref{lem:robust}. 
 \hfill \Halmos

\subsection{Proof of Lemma \ref{lemma:dist_1_approx}}

Using Lemma \ref{lem:robust} and Lemma \ref{lemma:dist_1}, we establish the proof of Lemma 
\ref{lemma:dist_1_approx}. As argued in the proof of Lemma \ref{lemma:dist_1}, for choice of
$t = \lfloor\frac{\ln(1/p)}{\ln(np)}\rfloor$ with $p = n^{-1 + \kappa}$ where
$1/\kappa$ is not an integer and $\delta = 0.1$, we have that $t+1 \leq s^*(n, p, 0.1)$ 
for $n$ large enough. Further, $np = n^\kappa$ and $p' (np/4)^{2t+1} = \Theta(n^{2\kappa (t+1) - 1})$ 
with $\kappa \leq 2\kappa(t+1) - 1$. As in Lemma \ref{lemma:dist_1}, we choose 
$x = n^{\rho/2}$ for $\rho \in (0,\kappa)$ in Lemma \ref{lem:robust}. By this selection, we have 
$x \leq (np/4)^{\frac12}$ for $n$ large enough. As in Lemma \ref{lemma:dist_1}, the event
$\cA$ (recall definition from \eqref{eq:lem1.events}) holds with probability at least 
$1 - O\big(n^2 \exp\big(-\Theta(n^{\min(\rho, \kappa(t - \frac12))})\big)\big)$. Indeed, $\cA$
implies the condition required for Lemma \ref{lem:robust} to hold with $x = n^{\rho/2}$ for all
$u \neq v \in [n]$. Finally, given this, the conclusion of Lemma  \ref{lem:robust} holds for all
$u\neq v \in [n]$ with probability at least $1 - \exp\Big(n^2 \exp\big(-\Theta(n^\kappa)\big)\Big)$. 
In summary, from Lemma \ref{lem:robust} and Lemma \ref{lemma:dist_1}, it follows that 
\begin{align}
|d(u,v) - \hat{d}(u, v)| & \leq O\Big(B r |\lambda_1|^{2/\kappa} n^{-\frac12 (\kappa - \rho)}\Big) + O\Big(t \eps (1+\eps)^t + t^2 \eps^2 (1+\eps)^{2t-1}\Big), 
\end{align}
holds with probability at least $1 - O\big(n^2 \exp\big(-\Theta(n^{\min(\rho, \kappa(t - \frac12))})\big)\big)$. 
This completes the proof of Lemma \ref{lemma:dist_1_approx}. 
 \hfill \Halmos

      \section*{Acknowledgements}
  We gratefully acknowledge funding from the NSF under grants CCF-1948256, CNS-1955997, CMMI-1462158, CMMI-1634259, and a TRIPODS phase I project.

    \bibliographystyle{plain}
    \bibliography{bibliography}
		
    
    \appendix
	\section{Proof of Extra Lemmas}


\begin{lemma} \label{lemma:inequalities}
We use two simple inequalities to argue when a summation is dominated by the single largest term.
For any $\rho \geq 2$,
\begin{align*}
\sum_{s = 1}^r \rho^s \leq 2 \rho^r
\end{align*}
For any $\rho \geq r^{1/(r-1)}$, it holds that $\rho^s \geq s\rho$ for all $s \leq r$. If additionally $\exp(-a \rho) \leq \frac12$,
\begin{align*}
\sum_{s = 1}^r \exp(-a \rho^s) \leq 2 \exp(-a \rho)
\end{align*}
\end{lemma}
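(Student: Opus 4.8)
The plan is to handle the two displayed inequalities separately; both are elementary and, once the right per‑term estimate is isolated, reduce to summing a geometric series.

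\textbf{First inequality.} I would write $\sum_{s=1}^{r}\rho^{s}=\rho^{r}\bigl(1+\rho^{-1}+\cdots+\rho^{-(r-1)}\bigr)\le \rho^{r}\sum_{j\ge 0}\rho^{-j}=\rho^{r}\cdot\frac{\rho}{\rho-1}$, and then use $\rho\ge 2$, which forces $\frac{\rho}{\rho-1}\le 2$. This gives $\sum_{s=1}^{r}\rho^{s}\le 2\rho^{r}$ with nothing further to check.

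\textbf{Second inequality.} The first task is the pointwise bound $\rho^{s}\ge s\rho$, i.e.\ $\rho^{s-1}\ge s$ for $1\le s\le r$. The cleanest route is to note $2^{s-1}\ge s$ for every $s\ge 1$ (a one‑line induction: $2^{s}=2\cdot 2^{s-1}\ge 2s\ge s+1$), so that $\rho\ge 2$ yields $\rho^{s-1}\ge 2^{s-1}\ge s$; here $r^{1/(r-1)}\le 2$ for all $r\ge 2$, and in every application of the lemma $\rho$ diverges, so $\rho\ge 2$ is the operative regime and the binding index is $s=2$ rather than $s=r$. With $\rho^{s}\ge s\rho$ in hand and $\exp(-a\rho)\le \tfrac12$, I would then write, for each $1\le s\le r$,
\[
\exp(-a\rho^{s})=\exp(-a\rho)\exp\bigl(-a(\rho^{s}-\rho)\bigr)\le \exp(-a\rho)\exp\bigl(-a(s-1)\rho\bigr)=\exp(-a\rho)\bigl(\exp(-a\rho)\bigr)^{s-1}\le \exp(-a\rho)\,2^{-(s-1)},
\]
using $\rho^{s}-\rho\ge (s-1)\rho$ from the pointwise bound. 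Summing over $s=1,\dots,r$ gives $\sum_{s=1}^{r}\exp(-a\rho^{s})\le \exp(-a\rho)\sum_{s=1}^{r}2^{-(s-1)}<2\exp(-a\rho)$, as claimed.

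\textbf{Main obstacle.} There is no genuine difficulty here; the only care needed is (i) the pointwise estimate $\rho^{s-1}\ge s$, which must be checked to hold for \emph{all} $s\le r$ and not merely $s=r$, and (ii) tracking constants — one must invoke $\exp(-a\rho)\le\tfrac12$ exactly to make the geometric ratio at most $\tfrac12$ so the tail sums to strictly less than $2$. Everything else is routine bookkeeping.
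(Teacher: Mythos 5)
The paper states Lemma~\ref{lemma:inequalities} in the appendix \emph{without} giving a proof, so there is nothing to compare your argument against; I can only assess correctness. Your proof of the first inequality is correct: the geometric tail gives $\sum_{s=1}^r \rho^s \le \rho^r \cdot \rho/(\rho-1) \le 2\rho^r$ once $\rho \ge 2$.

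Your treatment of the second inequality is also correct, but you have in fact uncovered that the lemma as written is slightly misstated, and you handled it in the right way. The stated hypothesis $\rho \ge r^{1/(r-1)}$ does \emph{not} imply $\rho^{s-1} \ge s$ for all $s \le r$: since $s \mapsto s^{1/(s-1)}$ is strictly decreasing on $s \ge 2$ (because $\frac{d}{ds}\frac{\ln s}{s-1} < 0$), the binding constraint is $s = 2$, giving the requirement $\rho \ge 2$, whereas $r^{1/(r-1)}$ (which equals $2$ only at $r=2$ and decreases to $1$) is the \emph{weakest} such constraint. Concretely, with $r=10$ and $\rho = 10^{1/9} \approx 1.29 \ge r^{1/(r-1)}$, one has $\rho^1 < 2$, so the claimed pointwise bound fails at $s=2$. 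Your move of replacing the hypothesis by $\rho \ge 2$ (under which $2^{s-1} \ge s$ follows by a one-line induction, and which subsumes $\rho \ge r^{1/(r-1)}$ for all $r \ge 2$) is the correct fix, and the remainder — writing $\exp(-a\rho^s) \le \exp(-a\rho)\bigl(\exp(-a\rho)\bigr)^{s-1} \le \exp(-a\rho)2^{-(s-1)}$ and summing the geometric series — is clean and complete, provided one notes (as is implicit throughout the paper) that $a > 0$. It would be worth flagging to the authors that the hypothesis in the lemma should read $\rho \ge 2$, which is indeed what holds wherever the lemma is invoked (there $\rho$ is of order $np = \omega(1)$).
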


Recall the definitions of $\phi$ and $s^*$,
\begin{align}
\phi(\delta) & = 1 - \left(\frac{1-\delta}{1-\delta\sqrt{2/3}}\right)^{1/2} ~<~1.
\end{align}
For any $p = \omega\big(\frac1n\big)$ and $p = o(1)$, 
\begin{align}
s^*(\delta, p, n) & = \sup\Big\{s \geq 1: \frac{p}{8} \left(\frac{(1 + \delta) np}{4} \right)^{s-1} \leq \phi(\delta)\Big\}. 
\end{align}
For any given $\delta$, $s^*(\delta, p, n)$ is well defined for $n$ large enough since $p =o(1)$. Event $\cA^1_{u,s}(\delta)$ is defined as
\[\cA^1_{u,s}(\delta) := \left\{|\cS_{u,s}| \in \left[\left(\frac{(1 - \delta) n p}{4}\right)^s, \left(\frac{(1 + \delta) n p}{4}\right)^s\right]\right\}.\]

\begin{lemma} \label{lemma:A1_prob_bd}
Let $\omega(\frac{1}{n}) \leq p \leq o(1)$, $\delta \in (0,1)$. For $1\leq s \leq s^*(\delta, p, n)$,  
\begin{align*}
\Prob{\neg\cA^1_{u,s}(\delta) ~|~ \cap_{h=1}^{s-1} \cA^1_{u,h}(\delta)} &\leq 2 \exp\left(-\frac{\delta^2}{3(1-\delta\sqrt{2/3})} \left(\frac{(1 - \delta) n p}{4}\right)^{s} \right).
\end{align*}
It follows that for $t + \ell \leq s^*(\delta, p, n)$,
\begin{align*}
\Prob{\cup_{s=1}^{t+\ell} \neg\cA^1_{u,s}(\delta)}
&\leq 4 \exp\left(-\frac{\delta^2 ((1 - \delta) n p)}{12(1-\delta\sqrt{2/3})} \right).
\end{align*}
\end{lemma}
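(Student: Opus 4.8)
The plan is to expose the breadth‑first search of $\cG=([n],\cE')$ from $u$ one layer at a time, recognize that $|\cS_{u,s}|$ is conditionally a binomial variable given the first $s-1$ layers, pin its conditional mean down using the defining property of $s^*$, apply multiplicative Chernoff bounds per layer, and then glue the layers by a union bound over the index of the first failure. Concretely: fix $1\le s\le s^*(\delta,p,n)$ and condition on any realization of the exploration out to layer $s-1$ consistent with $\bigcap_{h=1}^{s-1}\cA^1_{u,h}(\delta)$, so $|\cS_{u,h}|\in\bigl[((1-\delta)np/4)^h,\,((1+\delta)np/4)^h\bigr]$ for $h\le s-1$. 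Let $U$ be the set of vertices outside $\bigcup_{h=0}^{s-1}\cS_{u,h}$; a vertex is in $\cS_{u,s}$ iff it lies in $U$ and is joined to $\cS_{u,s-1}$ by an edge of $\cE'$, and since these events for distinct vertices of $U$ depend on disjoint, as‑yet‑unrevealed edge indicators, $|\cS_{u,s}|$ is conditionally $\mathrm{Bin}(|U|,q_s)$ with $q_s=1-(1-p/4)^{|\cS_{u,s-1}|}$. From $s\le s^*$, i.e. $(p/8)((1+\delta)np/4)^{s-1}\le\phi(\delta)$, together with Lemma~\ref{lemma:inequalities}, we get $\sum_{h=0}^{s-1}|\cS_{u,h}|\le 1+2((1+\delta)np/4)^{s-1}\le 1+16\phi(\delta)/p=o(n)$ (here $np=\omega(1)$), so $|U|=n(1-o(1))$, uniformly in $s$. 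The elementary inequalities $mx\ge 1-(1-x)^m\ge mx(1-mx/2)$ with $m=|\cS_{u,s-1}|$, $x=p/4$, and $mx/2=|\cS_{u,s-1}|\,p/8\le\phi(\delta)$ (again $s\le s^*$) give $|\cS_{u,s-1}|(p/4)(1-\phi(\delta))\le q_s\le |\cS_{u,s-1}|(p/4)$. Hence, for $n$ large and uniformly in $1\le s\le s^*$, the conditional mean $\mu_s=|U|q_s$ obeys
\[
\frac{((1-\delta)np/4)^s}{1-\delta\sqrt{2/3}}\ \le\ \mu_s\ \le\ ((1+\delta)np/4)^s .
\]
The upper bound follows from $|U|\le n$ and $|\cS_{u,s-1}|\le((1+\delta)np/4)^{s-1}$; for the lower bound I combine $|U|\ge n(1-o(1))$, $q_s\ge((1-\delta)np/4)^{s-1}(p/4)(1-\phi(\delta))$, and the identity $1-\phi(\delta)=\bigl(\tfrac{1-\delta}{1-\delta\sqrt{2/3}}\bigr)^{1/2}$, which makes $(1-\phi(\delta))(1-\delta)^{-1}=(1-\delta)^{-1/2}(1-\delta\sqrt{2/3})^{-1/2}$ strictly larger than $(1-\delta\sqrt{2/3})^{-1}$, so the $(1-o(1))$ slack is harmless.

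\emph{Per‑layer Chernoff.} Writing $((1-\delta)np/4)^s=(1-\beta)\mu_s$, the lower bound on $\mu_s$ forces $\beta\ge\delta\sqrt{2/3}$; since $\gamma\mapsto\gamma^2/(2(1-\gamma))$ is increasing on $(0,1)$ and equals $\tfrac{\delta^2}{3(1-\delta\sqrt{2/3})}$ at $\gamma=\delta\sqrt{2/3}$, the lower‑tail Chernoff bound $\Prob{\mathrm{Bin}(|U|,q_s)\le(1-\beta)\mu_s}\le e^{-\beta^2\mu_s/2}$ yields $\Prob{|\cS_{u,s}|<((1-\delta)np/4)^s\mid\bigcap_{h<s}\cA^1_{u,h}(\delta)}\le\exp\bigl(-\tfrac{\delta^2}{3(1-\delta\sqrt{2/3})}((1-\delta)np/4)^s\bigr)$. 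Writing $((1+\delta)np/4)^s=(1+\beta')\mu_s$, the upper bound $\mu_s\le(1+\delta)^{-1}((1+\delta)np/4)^s$ forces $\beta'\ge\delta$, and the upper‑tail bound $\Prob{\mathrm{Bin}(|U|,q_s)\ge(1+\beta')\mu_s}\le e^{-\beta'^2\mu_s/3}$ (for $\beta'\le1$, with an even smaller Poisson‑type bound when $\beta'>1$) is at most $e^{-\delta^2\mu_s/3}$, which by the lower bound on $\mu_s$ is again at most $\exp\bigl(-\tfrac{\delta^2}{3(1-\delta\sqrt{2/3})}((1-\delta)np/4)^s\bigr)$. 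Since these tail bounds are uniform over all exploration realizations with layer sizes in the prescribed ranges, they survive averaging, and adding the two tails gives the stated factor $2$.

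\emph{Union over the first failure.} Using $\Prob{A\cap B}\le\Prob{A\mid B}$ I would write $\Prob{\bigcup_{s=1}^{t+\ell}\neg\cA^1_{u,s}(\delta)}\le\sum_{s=1}^{t+\ell}\Prob{\neg\cA^1_{u,s}(\delta)\cap\bigcap_{h=1}^{s-1}\cA^1_{u,h}(\delta)}\le 2\sum_{s=1}^{t+\ell}\exp(-a\rho^s)$ with $a=\tfrac{\delta^2}{3(1-\delta\sqrt{2/3})}$ and $\rho=(1-\delta)np/4$. Since $\rho=\omega(1)$, for $n$ large we have $\rho\ge(t+\ell)^{1/(t+\ell-1)}$ and $e^{-a\rho}\le\tfrac12$, so Lemma~\ref{lemma:inequalities} bounds the sum by $2e^{-a\rho}$, giving $\Prob{\bigcup_{s=1}^{t+\ell}\neg\cA^1_{u,s}(\delta)}\le 4\exp\bigl(-\tfrac{\delta^2(1-\delta)np}{12(1-\delta\sqrt{2/3})}\bigr)$, as claimed.

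The delicate point — and the part I expect to need the most care — is the bookkeeping that produces exactly the constant $\tfrac{\delta^2}{3(1-\delta\sqrt{2/3})}$: one must route the slack in $q_s\ge|\cS_{u,s-1}|(p/4)(1-\phi(\delta))$ through the precise definition of $\phi(\delta)$ so that $\mu_s\ge((1-\delta)np/4)^s/(1-\delta\sqrt{2/3})$, and then exploit the matching $\beta^2/(2(1-\beta))=\tfrac{\delta^2}{3(1-\delta\sqrt{2/3})}$ at $\beta=\delta\sqrt{2/3}$ for the lower tail (and the parallel $\beta'^2/3$ estimate for the upper tail). Verifying that $|U|=n(1-o(1))$ and that the resulting $o(1)$ corrections are small enough to preserve these inequalities uniformly over $1\le s\le t+\ell$ for $n$ large — so that a single ``for $n$ large enough'' suffices — is the other thing requiring attention.
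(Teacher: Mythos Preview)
Your proposal is correct and follows essentially the same approach as the paper: expose the BFS layer by layer, recognize $|\cS_{u,s}|$ as conditionally binomial, control its mean via the $s^*$ condition, apply multiplicative Chernoff bounds, and union over the first failing layer using Lemma~\ref{lemma:inequalities}. The only cosmetic difference is bookkeeping: the paper applies Chernoff at the fixed deviations $(1+\delta)$ and $(1-\delta\sqrt{2/3})$ from the conditional mean and uses $|U|\ge n(1-\phi(\delta))$ together with the exact identity $(1-\delta\sqrt{2/3})(1-\phi(\delta))^2=1-\delta$ to land on the target boundaries, whereas you parametrize the targets as $(1\pm\beta)\mu_s$, use $|U|=n(1-o(1))$, and recover the constant via monotonicity of $\gamma\mapsto\gamma^2/(2(1-\gamma))$.
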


\paragraph{Proof of \ref{lemma:A1_prob_bd}.}
By definition, $s \leq s^*(\delta, p, n)$ implies that
\begin{align}
\frac18 p \left(\frac{(1 + \delta) np}{4} \right)^{s-1} \leq 1 - \left(\frac{1-\delta}{1-\delta\sqrt{2/3}}\right)^{1/2} =: \phi(\delta),\label{eq:ass_UB_t}
\end{align}

Let us denote $\cB_{u,s-1} = \cup_{h=1}^{s-1} \cS_{u,h}$. 
Conditioned on $\cap_{h=1}^{s-1} \cA^1_{u,h}(\delta)$, we can upper bound $|\cB_{u,s-1}|$ by
\begin{align*}
|\cB_{u,s-1}| 
&= 1 + \sum_{h=1}^{s-1} |\cS_{u,h}| ~\leq~ 1 + \sum_{h=1}^{s-1} (\frac{(1 + \delta) np}{4} )^h ~\leq~ 1 + 2 (\frac{(1 + \delta) np}{4} )^{s-1},
\end{align*}
where the last step follows from Lemma \ref{lemma:inequalities} showing that the summation is dominated by the largest term for sufficiently large $n$. By assuming $s \leq s^*(\delta, p, n)$, it follows that for sufficiently large $n$, because $np = \omega(1)$,
\begin{align*}
|\cB_{u,s-1}| &\leq 1 + \frac{16 \phi(\delta) n}{np} \leq \phi(\delta) n.
\end{align*}

Conditioned on the set $\cB_{u,s-1}$ and the set $\cS_{u,s-1}$, any vertex $i \in [n] \setminus \cB_{u,s-1}$ is in $\cS_{u,s}$ independently with probability $(1-(1-\frac{p}{4})^{|\cS_{u,s-1}|})$. Thus the number of vertices in $\cS_{u,s}$ is distributed as a binomial random variable.
By Chernoff's bound,
\begin{align*}
&\Prob{\left.|\cS_{u,s}| > (1 + \delta) (n - |\cB_{u,s-1}|) \left(1-\left(1-\frac{p}{4}\right)^{|\cS_{u,s-1}|}\right) ~\right|~ \cB_{u,s-1}, \cS_{u,s-1}, \cA^1_{u,s-1}} \\
&\leq \exp\left(-\frac13 \delta^2 (n - |\cB_{u,s-1}|) \left(1-\left(1-\frac{p}{4}\right)^{|\cS_{u,s-1}|}\right)\right) \\
&\stackrel{(a)}{\leq} \exp\left(-\frac{1}{3} \delta ^2 (n - |\cB_{u,s-1}|) \left(\frac{p|\cS_{u,s-1}|}{4} \right) \left(1-\frac18 p|\cS_{u,s-1}|\right)\right) \\
&\stackrel{(b)}{\leq} \exp\left(-\frac{1}{12} \delta^2 n p (1 - \phi(\delta)) \left(\frac{(1 - \delta) n p}{4}\right)^{s-1} (1-\phi(\delta))\right)\\
&= \exp\left(-\frac{1}{3} \delta^2 \frac{(1 - \phi(\delta))^2}{1-\delta} (\frac{(1 - \delta) n p}{4})^{s} \right) \\
&\stackrel{(c)}{=} \exp\left(-\frac{\delta^2}{3(1-\delta\sqrt{2/3})} \left(\frac{(1 - \delta) n p}{4}\right)^{s} \right),
\end{align*}
where inequality $(a)$ follows from $(1 - (1-x)^y) \geq xy (1 - \frac12 xy)$ for $x \in (0,1)$ and $y \in \IntegersP$, inequality $(b)$ follows from the event $\cA^1_{u,s-1}$ and the assumption $s \leq s^*(\delta,p,n)$, and equality $(c)$ follows from the fact that we constructed $\phi$ such that $(1 - \delta\sqrt{2/3}) (1-\phi(\delta))^2 = (1-\delta)$.
We obtain a lower bound on $|\cS_{u,s}|$ by a similar argument using Chernoff's bound,
\begin{align*}
&\Prob{\left. |\cS_{u,s}| < (1 - \delta\sqrt{2/3}) (n - |\cB_{u,s-1}|) \left(1-\left(1-\frac{p}{4}\right)^{|\cS_{u,s-1}|}\right) ~\right|~ \cB_{u,s-1}, \cS_{u,s-1}, \cA^1_{u,s-1}} \\
&\leq \exp\left(-\frac12 (\delta \sqrt{2/3})^2 (n - |\cB_{u,s-1}|) \left(1-\left(1-\frac{p}{4}\right)^{|\cS_{u,s-1}|}\right)\right) \\
&\leq \exp\left(-\frac13 \delta^2 (n - |\cB_{u,s-1}|) \left(\frac{p|\cS_{u,s-1}|}{4}\right) \left(1-\frac18 p|\cS_{u,s-1}|\right)\right) \\
&\leq \exp\left(-\frac{\delta^2}{3(1-\delta\sqrt{2/3})} \left(\frac{(1 - \delta) n p}{4}\right)^{s}\right).
\end{align*}
Conditioned on $\cA^1_{u,s-1}$, the above two inequalities show that $\cA^1_{u,s}$ holds with high probability. The upper bound follows from
\begin{align*}
|\cS_{u,s}| 
&\leq (1 + \delta) (n - |\cB_{u,s-1}|) \left(1-\left(1-\frac{p}{4}\right)^{|\cS_{u,s-1}|}\right) \\
&\leq (1 + \delta) \frac{n p}{4} |\cS_{u,s-1}| ~\leq~ \left(\frac{(1 + \delta) np}{4} \right)^s
\end{align*}
and the lower bound follows from
\begin{align*}
|\cS_{u,s}|
&\geq (1 - \delta\sqrt{2/3}) (n - |\cB_{u,s-1}|) \left(1-\left(1-\frac{p}{4}\right)^{|\cS_{u,s-1}|}\right) \\
&\geq (1 - \delta\sqrt{2/3}) n (1 - \phi(\delta)) \frac{p|\cS_{u,s-1}|}{4} \left(1-\frac18 p|\cS_{u,s-1}|\right) \\
&\geq (1 - \delta\sqrt{2/3}) \frac{n p}{4} (1 - \phi(\delta)) |\cS_{u,s-1}| \left(1-\frac18 p \left(\frac{(1 + \delta) np}{4} \right)^{s-1}\right) \\
&\geq (1 - \delta\sqrt{2/3}) \frac{np}{4} (1 - \phi(\delta)) |\cS_{u,s-1}| (1-\phi(\delta)) \\
&= (1 - \delta\sqrt{2/3}) \frac{np}{4} |\cS_{u,s-1}| (1-\phi(\delta))^2 \\
&\stackrel{(b)}{=} \frac{(1 - \delta) np}{4} |\cS_{u,s-1}| ~\geq~ \left(\frac{(1 - \delta) np}{4} \right)^s.
\end{align*}
where equality $(b)$ follows from the fact that we constructed $\phi$ such that $(1 - \delta\sqrt{2/3}) (1-\phi(\delta))^2 = (1-\delta)$.

We finally lower bound the probability of event $\cap_{s=1}^{t+\ell} \cA^1_{u,s}$, by a repeated application of Chernoff's bound for all $s \in [t + \ell]$,
\begin{align*}
\Prob{\cup_{s=1}^{t+\ell} \neg\cA^1_{u,s}(\delta)}
&= \sum_{s=1}^{t+\ell} \Prob{\neg\cA^1_{u,s}(\delta) ~|~ \cap_{h=1}^{s-1} \cA^1_{u,h}(\delta)} \\
&\leq \sum_{s=1}^{t+\ell} 2 \exp\left(-\frac{\delta^2}{3(1-\delta\sqrt{2/3})} \left(\frac{(1 - \delta) n p}{4}\right)^{s} \right) \\
&\stackrel{(a)}{\leq} 4 \exp\left(-\frac{\delta^2 ((1 - \delta) n p)}{12(1-\delta\sqrt{2/3})} \right),
\end{align*}
where inequality $(a)$ follows from the assumption that $pn = \omega(1)$ such that the largest term in the summation dominates. \hfill \Halmos

\end{document}